\documentclass[12pt,a4paper]{elsarticle}
\usepackage[utf8]{inputenc}
\usepackage{amsmath}
\usepackage{natbib}
\usepackage{hyperref}
\usepackage{breqn}
\usepackage{amsthm}
\usepackage{amsfonts}
\usepackage{amssymb} 
\usepackage{graphicx}
\usepackage[left=2cm,right=2cm,top=2cm,bottom=2cm]{geometry}
\usepackage{lineno,hyperref}
\modulolinenumbers[5]

\newtheorem{theorem}{Theorem}[section]
\newtheorem{corollary}{Corollary}[section]
\newtheorem{lemma}{Lemma}[section]
\newtheorem{proposition}{Proposition}[section]
\newtheorem{conjecture}{Conjecture}[section]
\newtheorem*{remark}{Remark}
\newtheorem{example}{Example}[section]
\newtheorem*{definition}{Definition}

\DeclareUnicodeCharacter{2061}{}

\begin{document}
\begin{frontmatter}
\title{Repeated Integration and Explicit Formula for the $n$-th Integral of $x^m(\ln x)^{m'}$}
\author{Roudy El Haddad}
\address{Université La Sagesse, Faculté de génie, Polytech}
\ead{roudy1581999@live.com}

\begin{abstract}
Repeated integration is a major topic of integral calculus. In this article, we study repeated integration. In particular, we study repeated integrals and recurrent integrals. For each of these integrals, we develop reduction formulae for both the definite as well as indefinite form. These reduction formulae express these repetitive integrals in terms of single integrals.
We also derive a generalization of the fundamental theorem of calculus that expresses a definite integral in terms of an indefinite integral for repeated and recurrent integrals. 
From the recurrent integral formulae, we derive some partition identities. 
Then we provide an explicit formula for the $n$-th integral of $x^m(\ln x)^{m'}$ in terms of a shifted multiple harmonic star sum. Additionally, we use this integral to derive new expressions for the harmonic sum and repeated harmonic sum. 
\end{abstract}
\begin{keyword}
Repeated integration, repeated integral, recurrent integral, logarithmic integral, harmonic sum, repeated harmonic sum, multiple harmonic star sum, partition. 
\MSC[2020] primary 26A36 secondary 26A06, 97I50, 11M32, 11P99. 
\end{keyword}
\end{frontmatter}
\section{Introduction}
The author is extremely interested in the study of repetitive structures. In previous articles, the author studied three types of repetitive sums: recurrent sums \cite{RecurrentSums}, multiple sums \cite{PolynomialSums}, and repeated sums \cite{RepeatedSums}. In this article, we extend this study to integrals by considering two types of repetitive integrals: the repeated integral and the recurrent integral. Our aim is to obtain formulae to simplify these general integral structures in order to improve our ability to work with them. The development of integral calculus and multivariable calculus was revolutionary for all branches of science. Repeated integration is a fundamental tool used everywhere from mathematics to physics to biology to economics.
Due to this widespread use of such structures, improvements in the understanding of such structures could potentially have numerous applications. 
The first person to attempt simplifying repeated integrals was Cauchy \cite{Cauchy} who developed the following reduction formula: 
$$
\int_{a}^{x}{\cdots{\int_{a}^{x_{3}}{\int_{a}^{x_{2}}{f(x_1)\,dx_1\cdots dx_n}}}}
=\frac{1}{(n-1)!}{\int_{a}^{x}{(x-t)^{n-1}f(t)\,dt}}
.$$
This formula is generalized to non-integer parameters by the Riemann-Liouville integral \cite{riemann,liouville,AdvancedCal}. Both of these are generalized to arbitrary dimension by the Riesz potential \cite{riesz}. These formulae have a multitude of applications, in particular, they are used in fractional calculus in order to perform integrations or differentiations of fractional order \cite{FractionalCal}. 
In this article, we introduce a new reduction formula for both the definite and indefinite form of repeated integrals as well as we produce a generalization of the fundamental theorem of calculus linking the definite form to the indefinite form. We repeat this procedure for the second type of repetitive integrals studied, that is, recurrent integrals. In particular, the simplification formulas for definite recurrent integrals will involve partitions. Hence, exploiting this connection between recurrent integrals and partitions, we derive some partition identities. 

A particular repeated integral in which we are interested is the $n$-th integral of $x^m (\ln x)^{m'}$.  
We are interested in the repeated integral of $x^m (\ln x)^{m'}$ as developing a formula for such an integral will require the use of all three main theorems (the variation formula, inversion formula, and reduction formula) of \cite{RecurrentSums}. Hence, we present simplification formulae for this integral as an application to that cited article. This integral is surprisingly linked to the harmonic sum as well as to a more general form called the multiple harmonic star sum (which is a particular case of recurrent sums \cite{RecurrentSums}). In fact, the reduction formula for the repeated integral of $x^m (\ln x)^{m'}$ is expressed in terms of a recurrent sum (in particular, a shifted multiple harmonic star sum). The connection between repeated integrals and recurrent sum structures has been previously explored. In 2018, Ce Xu \cite{xu2018} illustrated the connection between similar integrals involving logarithms and a particular recurrent sum (the multiple zeta star function or multiple harmonic star sum). 
In this article, we further contribute to establishing this connection between logarithmic integrals and multiple harmonic star sums. 
The multiple harmonic star sum (MHSS) \cite{kuba2019,hoffman1996sums,murahara2019interpolation} is defined as follows: 
$$
\zeta_{n}^{\star}(s_1,s_2, \ldots, s_k)
=\sum_{1 \leq N_1 \leq N_2 \leq \cdots \leq N_k \leq n}{\frac{1}{N_1^{s_1}N_2^{s_2}\cdots N_k^{s_k}}}
.$$
For the purpose of this article, let us also define the following more general notation: 
$$
\zeta_{q,n}^{\star}(s_1,s_2, \ldots, s_k)
=\sum_{q \leq N_1 \leq N_2 \leq \cdots \leq N_k \leq n}{\frac{1}{N_1^{s_1}N_2^{s_2}\cdots N_k^{s_k}}}
.$$
Also, let $(s_1,\ldots,s_k)=(\{p\}_{k})$ represent $(s_1,\ldots,s_k)=(p, \ldots, p)$. 

The connection goes both ways: In the same way multiple harmonic star sums help in simplifying such integrals, these integrals can be used to simplify certain harmonic sums. Study \cite{RepeatedSums}, presents several formulae for simplifying harmonic sums, repeated harmonic sums as well as a variant involving binomial coefficients (that is referred to as the binomial-harmonic sum). Exploiting the connection established between logarithmic integrals and harmonic sums, these relations will be utilized to derive formulae for the repeated harmonic sum and the repeated binomial-harmonic sum. \\ 

In Section \ref{section 2}, we derive reduction formulas for indefinite repeated integrals as well as indefinite recurrent integrals which allow us to express such integrals in terms of simple (non-repetitive) integrals. Then, we present a generalization of the fundamental theorem of calculus for repeated integrals and recurrent integrals. These formulae will then be used to express the definite repeated and recurrent integrals in terms of definite single integrals. Partition identities will also be derived from the recurrent integral formulae. 
In Section \ref{section 3}, we propose explicit formulas for the $n$-th order repeated integral of $x^m(\ln x)^{m'}$. These formulas will include $n$, $m$, and $m'$ as parameters in the expression. They will also include certain number theoretical concepts such as MHSS and partitions. 
In Section \ref{section 4}, the relation between this integral and the harmonic sum will be exploited to produce results related to the harmonic sum such as an alternating sum representation of the harmonic sum. We also utilize the relations derived in \cite{RepeatedSums} to derive a similar alternating sum representation for the repeated harmonic sum as well as for the modified form of the repeated harmonic sum (the repeated binomial-harmonic sum). 
\section{Repeated integration} \label{section 2}
In this section, we consider two techniques of repeated integration: repeated integrals and recurrent integrals. We begin by defining these types of repetitive integrals and presenting some notation. 
\begin{definition}
We define the indefinite repeated integral  of order $n$ of the function $f(x)$, denoted $I_n[f(x)](x)$ or simply $I_n$, by the following structure: 
\begin{equation}
I_n=
\int{\cdots\int{f(x)\,dx\cdots dx}}
=\int{\cdots\int{f(x)\,dx^n}}
.\end{equation}
It can also be defined by the following recurrent relation: 
\begin{equation*}
\begin{cases}
I_{n+1}=\int{I_{n} \,dx}, \\
I_{0}\,\,\,\,\,\,=f(x). 
\end{cases}
\end{equation*}
\end{definition}
\begin{definition}
We define the $n$-th order definite repeated integral of the function $f(x)$, denoted $I_{n,a,b}[f(x)]$ or simply $I_{n,a,b}$, by the following structure: 
\begin{equation}
I_{n,a,b}=
\int_{a}^{b}{\cdots \int_{a}^{x_3}{\int_{a}^{x_2}{f(x_1)\,dx_1dx_2 \cdots dx_n}}}
.\end{equation}
\end{definition}
\begin{definition}
We define the indefinite recurrent integral of order $n$ of the function $f(x)$, denoted $J_n[f(x)](x)$ or simply $J_n$, by the following recurrent relation: 
\begin{equation*}
\begin{cases}
J_{n+1}=\int{f(x)J_{n} \,dx}, \\
J_{0}\,\,\,\,\,\,=1. 
\end{cases}
\end{equation*}
Explicitly, it can also be expressed as follows: 
\begin{equation}
J_n=
\int{f(x)\int{f(x)\cdots\int{f(x)\,dx^n}}}
.\end{equation}
\end{definition}
\begin{definition}
We define the $n$-th order definite recurrent integral of the function $f(x)$, denoted $J_{n,a,b}[f(x)]$ or simply $J_{n,a,b}$, by the following structure: 
\begin{equation}
J_{n,a,b}=
\int_{a}^{b}{f(x_n)\int_{a}^{x_n}{f(x_{n-1}) \cdots \int_{a}^{x_2}{f(x_1)\,dx_1 \cdots dx_n}}}
.\end{equation}
\end{definition}
\begin{remark}
{\em As we can see, repeated integrals are analogous to repeated sums \cite{RepeatedSums} while recurrent integrals are analogous to recurrent sums \cite{RecurrentSums}. }
\end{remark} 
Likewise, before we proceed, we need to generalize the concept of constant of integration: 
Let us define $C_{n}(x)$ as 
\begin{equation*}
C_{n}(x)=\sum_{i=0}^{n-1}{c_{i} \frac{x^i}{i!}}
\end{equation*}
where the $c_i$'s are constants of integration. 
\begin{remark}
{\em If we are using $C_{n}(x)$ as a function of $x$, for simplicity, we will denote it as $C_n$.}
\end{remark}
\subsection{Repeated integrals} 
In this section, we begin by providing a way to compute indefinite repeated integrals in terms of simple indefinite integrals. Then we generalize the concept of primitive and present a generalization of the fundamental theorem of calculus. Finally, we present a formula for computing definite repeated integrals in terms of simple definite integrals. 
\subsubsection{Indefinite repeated integrals}
We begin by proving a formula for expressing indefinite repeated integrals in terms of single integrals. We refer to this theorem as the reduction formula for indefinite repeated integrals. 
\begin{theorem} \label{t 2.1}
Let $f(x)$ be a function of $x$, for any $n\in\mathbb{N^*}$, we have that 
\begin{equation*}
\int{\cdots\int{f(x)dx^n}}
=\sum_{k=1}^{n}{(-1)^{n-k}\frac{x^{k-1}}{(k-1)!}\left(\int{\frac{x^{n-k}}{(n-k)!}f(x)dx}\right)}+C_n
.\end{equation*}
\end{theorem}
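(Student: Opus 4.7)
I would proceed by induction on $n$, using the recursive relation $I_{n+1}=\int I_n\,dx$ from the definition of the indefinite repeated integral. The base case $n=1$ is immediate: the sum collapses to the single term $\int f(x)\,dx$, and $C_1=c_0$ absorbs the sole constant of integration.

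For the inductive step, assume the formula holds at level $n$ and apply $\int\cdot\,dx$ to both sides. The polynomial $C_n(x)=\sum_{i=0}^{n-1}c_i x^i/i!$ integrates to a degree-$n$ polynomial which, together with a fresh constant of integration, is by definition $C_{n+1}$. For each summand, set
$$G_j(x):=\int\frac{x^j}{j!}f(x)\,dx,$$
so that the $k$-th term is $\frac{x^{k-1}}{(k-1)!}\,G_{n-k}(x)$. Integration by parts with $u=G_{n-k}(x)$ and $dv=\frac{x^{k-1}}{(k-1)!}\,dx$, together with the identity $\frac{x^k}{k!}\cdot\frac{x^{n-k}}{(n-k)!}=\binom{n}{k}\frac{x^n}{n!}$, gives
$$\int\frac{x^{k-1}}{(k-1)!}\,G_{n-k}(x)\,dx=\frac{x^k}{k!}\,G_{n-k}(x)-\binom{n}{k}\int\frac{x^n}{n!}f(x)\,dx.$$
After the shift $j=k+1$, the first pieces produce precisely the terms indexed by $j=2,\dots,n+1$ of the target formula at level $n+1$, each carrying the correct sign $(-1)^{n+1-j}$.

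The delicate step, and the main obstacle, is recovering the missing $j=1$ term. Collecting the boundary contributions leaves the overall coefficient
$$-\sum_{k=1}^{n}(-1)^{n-k}\binom{n}{k}\int\frac{x^n}{n!}f(x)\,dx.$$
Here I would invoke the standard binomial identity $\sum_{k=0}^{n}(-1)^{n-k}\binom{n}{k}=0$, valid for $n\ge 1$, which forces the inner sum to equal $(-1)^{n+1}$. Consequently the combined coefficient of $\int\frac{x^n}{n!}f(x)\,dx$ is $(-1)^n=(-1)^{(n+1)-1}$, which is exactly the $j=1$ term of the formula at level $n+1$. This closes the induction. Everything else in the argument is purely mechanical bookkeeping of signs, factorials, and the constants absorbed into $C_{n+1}$.
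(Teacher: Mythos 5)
Your proposal is correct and follows essentially the same route as the paper's own proof: induction on $n$ via the recursion $I_{n+1}=\int I_n\,dx$, integration by parts on each summand using the identity $\frac{x^k}{k!}\cdot\frac{x^{n-k}}{(n-k)!}=\binom{n}{k}\frac{x^n}{n!}$, and recovery of the $k=1$ term from the alternating binomial sum $\sum_{k=1}^{n}(-1)^{k}\binom{n}{k}=-1$. The sign bookkeeping and the absorption of the integrated constants into $C_{n+1}$ match the paper's argument exactly.
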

\begin{remark}
By substituting $k$ by $n-k$ and performing a few manipulations, this theorem can be rewritten as follows: 
\begin{equation*}
\int{\cdots\int{f(x)dx^n}}
=\frac{x^{n-1}}{(n-1)!}\sum_{k=0}^{n-1}{\binom{n-1}{k}\frac{(-1)^{k}}{x^{k}}\left(\int{x^{k}f(x)dx}\right)}+C_n
.\end{equation*}
\end{remark}
\begin{proof}
1. Base case: verify true for $n=1$. 
\begin{equation*}
\sum_{k=1}^{1}{(-1)^{1-k}\frac{x^{k-1}}{(k-1)!}\left(\int{\frac{x^{1-k}}{(1-k)!}f(x)dx}\right)}+C_1
=\int{f(x)\,dx}+C_1
.\end{equation*}
2. Induction hypothesis: assume the statement is true until $n$. 
\begin{equation*}
\int{\cdots\int{f(x)dx^n}}
=\sum_{k=1}^{n}{(-1)^{n-k}\frac{x^{k-1}}{(k-1)!}\left(\int{\frac{x^{n-k}}{(n-k)!}f(x)dx}\right)}+C_n
.\end{equation*}
3. Induction step: we will show that this statement is true for $(n+1)$. \\
We have to show the following statement to be true: 
\begin{equation*}
\int{\cdots\int{f(x)dx^{n+1}}}
=\sum_{k=1}^{n+1}{(-1)^{n-k+1}\frac{x^{k-1}}{(k-1)!}\left(\int{\frac{x^{n-k+1}}{(n-k+1)!}f(x)dx}\right)}+C_{n+1}
.\end{equation*}
$$ \\ $$
To be concise, we denote the left hand side term by $I$. By applying the induction hypothesis,  
\begin{equation*}
\begin{split}
I
&=\int{\left(\int{\cdots \int{f(x)\,dx^{n}}}\right)dx} \\
&=\int{\left(\sum_{k=1}^{n}{(-1)^{n-k}\frac{x^{k-1}}{(k-1)!}\left(\int{\frac{x^{n-k}}{(n-k)!}f(x)dx}\right)}+C_n\right)dx} \\
&=\int{\left(\sum_{k=1}^{n}{(-1)^{n-k}\frac{x^{k-1}}{(k-1)!}\left(\int{\frac{x^{n-k}}{(n-k)!}f(x)dx}\right)}\right)dx} +\int{\left(\sum_{i=0}^{n-1}{c_{i} \frac{x^i}{i!}}\right)\,dx}+C \\
&=\sum_{k=1}^{n}{(-1)^{n-k}\int{\frac{x^{k-1}}{(k-1)!}\left(\int{\frac{x^{n-k}}{(n-k)!}f(x)dx}\right)dx}} +\sum_{i=0}^{n-1}{c_{i} \frac{x^{i+1}}{(i+1)!}}+C 
.\end{split}
\end{equation*}
Setting $du=\frac{x^{k-1}}{(k-1)!}dx$ and $v=\int{\frac{x^{n-k}}{(n-k)!}f(x)\,dx}$, then applying integration by parts, we have 
\begin{equation*}
\begin{split}
\int{\frac{x^{k-1}}{(k-1)!}\left(\int{\frac{x^{n-k}}{(n-k)!}f(x)dx}\right)dx}
&=\frac{x^k}{k!}\int{\frac{x^{n-k}}{(n-k)!}f(x)\,dx}
-\int{\frac{x^k}{k!}\frac{x^{n-k}}{(n-k)!}f(x)\,dx}\\
&=\frac{x^k}{k!}\int{\frac{x^{n-k}}{(n-k)!}f(x)\,dx}
-\binom{n}{k}\int{\frac{x^{n}}{n!}f(x)\,dx}
.\end{split}
\end{equation*} 
Substituting back, we get 
\begin{equation*}
\begin{split}
I
&=\sum_{k=1}^{n}{(-1)^{n-k}\frac{x^k}{k!}\int{\frac{x^{n-k}}{(n-k)!}f(x)\,dx}}
-\sum_{k=1}^{n}{(-1)^{n-k}\binom{n}{k}\int{\frac{x^{n}}{n!}f(x)\,dx}} 
+\sum_{i=0}^{n-1}{c_{i} \frac{x^{i+1}}{(i+1)!}}+C \\
&\resizebox{\linewidth-7pt}{!}{$\displaystyle{=\sum_{k=1}^{n}{(-1)^{n-k}\frac{x^k}{k!}\int{\frac{x^{n-k}}{(n-k)!}f(x)\,dx}}
-(-1)^{n}\left(\int{\frac{x^{n}}{n!}f(x)\,dx}\right)\sum_{k=1}^{n}{(-1)^{k}\binom{n}{k}} 
+\sum_{i=1}^{n}{c_{i-1} \frac{x^{i}}{i!}}+C. }$}
\end{split}
\end{equation*} 
Knowing that $\sum_{k=0}^{n}{(-1)^{k}\binom{n}{k}}=0$, 
$$
\sum_{k=1}^{n}{(-1)^{k}\binom{n}{k}} =\sum_{k=0}^{n}{(-1)^{k}\binom{n}{k}} -1 
=-1.
$$
Also, let us consider the set of constants $b_i$ such that $b_0=C$ and, for $i \geq 1$, $b_i=c_{i-1}$. Hence, 
\begin{equation*}
\begin{split}
I
&=\sum_{k=1}^{n}{(-1)^{n-k}\frac{x^k}{k!}\int{\frac{x^{n-k}}{(n-k)!}f(x)\,dx}}
+(-1)^{n}\left(\int{\frac{x^{n}}{n!}f(x)\,dx}\right)
+\sum_{i=0}^{n}{b_{i} \frac{x^{i}}{i!}} \\
&=\sum_{k=2}^{n+1}{(-1)^{n-k+1}\frac{x^{k-1}}{(k-1)!}\int{\frac{x^{n-k+1}}{(n-k+1)!}f(x)\,dx}}
+(-1)^{n}\left(\int{\frac{x^{n}}{n!}f(x)\,dx}\right)
+\sum_{i=0}^{n}{b_{i} \frac{x^{i}}{i!}} \\
&=\sum_{k=1}^{n+1}{(-1)^{n-k+1}\frac{x^{k-1}}{(k-1)!}\int{\frac{x^{n-k+1}}{(n-k+1)!}f(x)\,dx}}
+\sum_{i=0}^{n}{b_{i} \frac{x^{i}}{i!}}
.\end{split}
\end{equation*} 
Noting that $\sum_{i=0}^{n}{b_{i} \frac{x^{i}}{i!}}$ represents $C_{n+1}$, the theorem is proven by induction. 
\end{proof}
Hence, to find the $n$-th integral of $f(x)$ according to Theorem \ref{t 2.1}, it is enough to find an expression for the integral of $x^k f(x)$ in terms of $k$. Thus, the problem is reduced from computing an $n$-th integral to computing a simple integral.  
Using this theorem, we find two propositions that will be useful later on. 
\begin{proposition} \label{repeated integral of 1}
{\em For $f(x)=1$, using Theorem \ref{t 2.1}, we get that } 
\begin{equation*} 
\int{\cdots \int{1}\,dx^n}=\frac{x^n}{n!}+C_n
.\end{equation*}
\end{proposition}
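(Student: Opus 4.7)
The plan is to substitute $f(x) = 1$ directly into Theorem \ref{t 2.1} and show that the resulting sum collapses to $x^n/n!$. First I would compute the inner integral for this specific choice: since $f(x)=1$, we have
$$\int \frac{x^{n-k}}{(n-k)!}\,dx = \frac{x^{n-k+1}}{(n-k+1)!},$$
which is elementary. Plugging this back into the reduction formula of Theorem \ref{t 2.1} turns each summand into $(-1)^{n-k} \frac{x^{k-1}}{(k-1)!} \cdot \frac{x^{n-k+1}}{(n-k+1)!}$, a pure power of $x$ times a rational coefficient.

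Next I would factor out $x^n/n!$ from every term, since each summand collects to $\frac{x^n}{n!}\binom{n}{k-1}(-1)^{n-k}$. After the reindexing $j = k-1$ the statement reduces to proving the scalar identity
$$\sum_{j=0}^{n-1}(-1)^{n-1-j}\binom{n}{j} = 1,$$
after which the factor $x^n/n!$ is the only surviving term and the arbitrary polynomial $C_n$ on the right absorbs the $C_n$ from Theorem \ref{t 2.1}.

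The only nontrivial step is this combinatorial identity, but it is exactly the standard alternating binomial sum truncated one term short: using $\sum_{j=0}^{n}(-1)^{j}\binom{n}{j} = 0$ (the very fact invoked inside the proof of Theorem \ref{t 2.1}), one obtains $\sum_{j=0}^{n-1}(-1)^{j}\binom{n}{j} = -(-1)^n = (-1)^{n+1}$, and multiplying by $(-1)^{n-1}$ gives $(-1)^{2n} = 1$. So there is no real obstacle; the entire proof is a direct specialization of Theorem \ref{t 2.1} followed by a one-line binomial identity, and it is worth writing up just to record the formula for later use (for instance when recognizing $x^n/n!$ inside the $n$-th integral of $x^m(\ln x)^{m'}$ in Section \ref{section 3}).
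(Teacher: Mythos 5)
Your proposal is correct and follows essentially the same route as the paper: substitute $f(x)=1$ into Theorem \ref{t 2.1}, evaluate the elementary integral, factor out $\frac{x^n}{n!}$, and collapse the coefficients via the alternating binomial identity $\sum_{j=0}^{n}(-1)^{j}\binom{n}{j}=0$. The only (cosmetic) difference is that the paper plugs into the reindexed form of the theorem stated in its remark, while you use the original form; the resulting computation is the same.
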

\begin{proof}
Applying Theorem \ref{t 2.1} for $f(x)=1$, we get
\begin{equation*}
\begin{split}
\int{\cdots\int{1\,dx^n}}
&=\frac{x^{n-1}}{(n-1)!}\sum_{k=0}^{n-1}{\binom{n-1}{k}\frac{(-1)^{k}}{x^{k}}\left(\int{x^{k}dx}\right)}+C_n \\
&=\frac{x^{n}}{(n-1)!}\sum_{k=0}^{n-1}{\binom{n-1}{k}\frac{(-1)^{k}}{k+1}}+C_n \\
&=\frac{x^{n}}{n!}\sum_{k=0}^{n-1}{\binom{n}{k+1}(-1)^{k}}+C_n \\
&=\frac{x^{n}}{n!}+C_n
.\end{split}
\end{equation*}
\end{proof}
\begin{proposition} \label{repeated integral of ln}
{\em For $f(x)=\ln x$, using Theorem \ref{t 2.1}, we get that } 
\begin{equation*} 
\int{\cdots \int{\ln x}\,dx^n}-C_{n}
=\int{\cdots \int{\frac{1}{x}}\,dx^{n+1}}-C_{n+1}
=\frac{x^{n}}{n!}\left[ \ln x +\sum_{k=1}^{n}{\binom{n}{k}\frac{(-1)^k}{k}} \right]
.\end{equation*}
\end{proposition}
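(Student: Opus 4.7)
The plan is to apply Theorem \ref{t 2.1} (in the binomial form stated in the remark that follows it) twice: once with $f(x)=\ln x$ at order $n$, and once with $f(x)=1/x$ at order $n+1$, and verify that both collapse to $\frac{x^n}{n!}\bigl[\ln x + \sum_{k=1}^n \binom{n}{k}\frac{(-1)^k}{k}\bigr]$ after the constants of integration are absorbed. Since the statement of the proposition explicitly subtracts $C_n$ on one side and $C_{n+1}$ on the other, I do not have to worry about tracking these.

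For the identity involving $1/x$, I would instantiate the binomial form of Theorem \ref{t 2.1} at order $n+1$, so that the inner integral becomes $\int x^{k-1}\,dx$ for $k=0,\dots,n$. I would isolate the $k=0$ term, which yields $\ln x$ (and after multiplication by $\frac{x^n}{n!}$ contributes the $\ln x$-part of the claimed expression), and for $k\geq 1$ the inner integral is $\frac{x^k}{k}$, whose factor of $x^k$ cancels the $x^{-k}$ in front, leaving exactly the finite sum $\sum_{k=1}^n \binom{n}{k}\frac{(-1)^k}{k}$. This gives the right-hand equality essentially for free.

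For the identity involving $\ln x$, I would first compute by integration by parts the single integral $\int x^k \ln x\,dx = \frac{x^{k+1}}{k+1}\ln x - \frac{x^{k+1}}{(k+1)^2}$, then substitute it into the binomial form of Theorem \ref{t 2.1} at order $n$. Using the identity $\frac{1}{k+1}\binom{n-1}{k}=\frac{1}{n}\binom{n}{k+1}$ and the index shift $j=k+1$, the right-hand side becomes $\frac{x^n}{n!}\sum_{j=1}^n \binom{n}{j}(-1)^{j-1}\bigl(\ln x - \frac{1}{j}\bigr)$. The coefficient of $\ln x$ is $\sum_{j=1}^n \binom{n}{j}(-1)^{j-1}=1$, which is exactly the binomial identity already invoked in the proof of Theorem \ref{t 2.1}; flipping the sign $(-1)^{j-1}\mapsto-(-1)^j$ in the remaining term produces the target sum.

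The only real obstacle is bookkeeping: the index shift $k\mapsto j=k+1$ and the sign flip must be executed consistently so that the two computations land on the \emph{same} closed form. No new ideas beyond Theorem \ref{t 2.1}, one integration by parts, and the standard alternating binomial identity are needed.
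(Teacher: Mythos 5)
Your proposal is correct: both instantiations of the binomial form of Theorem \ref{t 2.1} (order $n+1$ with $f(x)=1/x$, and order $n$ with $f(x)=\ln x$ together with $\int x^k\ln x\,dx=\frac{x^{k+1}}{k+1}\ln x-\frac{x^{k+1}}{(k+1)^2}$, the identity $\frac{1}{k+1}\binom{n-1}{k}=\frac{1}{n}\binom{n}{k+1}$, and $\sum_{j=1}^{n}(-1)^{j-1}\binom{n}{j}=1$) do land on the same closed form. The paper actually states this proposition without proof, but your argument is precisely the computation it intends --- it mirrors the printed proof of Proposition \ref{repeated integral of 1} for $f(x)=1$, so there is nothing to add.
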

\begin{remark}
{\em Theorem \ref{t 2.1} can also be generalized to non-integer orders to obtain: }
\begin{equation*}
I_{\alpha}
=\frac{x^{\alpha-1}}{\Gamma(\alpha)}\sum_{k=0}^{\infty}{\binom{\alpha-1}{k}\frac{(-1)^k}{x^k}\int{x^k f(x)\,dx}}
\end{equation*}
{\em where} 
$$
\binom{\alpha-1}{k}=\frac{(\alpha-1)\cdots(\alpha-k)}{k!}.
$$
\end{remark}
\subsubsection{Relation between definite and indefinite repeated integrals}
The fundamental theorem of calculus relates the definite integral of a function to its primitive. In this section, we develop a generalization of the fundamental theorem of calculus that relates the definite repeated integral of a function to its repeated primitives. 

We begin by defining and generalizing the concept of primitive. 
\begin{definition}
We denote by $F(x)$ the primitive of $f(x)$. We use primitive to mean the expression of the indefinite integral without the constant of integration (In other words, the case where the constant of integration is zero). 
\end{definition}
\begin{definition}
We define the primitive of order $n$ of the function $f(x)$, denoted $F_n[f(x)](x)$ or $F_n(x)$, as the expression of the $n$-th indefinite integral of $f(x)$ excluding the constants of integration (In other words, the case where the constants of integration are zero). 
\end{definition}
The following theorem illustrates how to compute the $n$-th primitive of a function $f(x)$ in terms of a simple primitive, specifically, the primitive of $x^k f(x)$. 
\begin{theorem} \label{t 2.3}
Let $\Phi_{k}(x)$ be the primitive of $x^{k}f(x)$. The $n$-th primitive of $f(x)$ can be expressed as follows: 
\begin{equation*}
F_n(x)
=\sum_{k=1}^{n}{(-1)^{n-k}\frac{x^{k-1}}{(k-1)!}\frac{\Phi_{n-k}(x)}{(n-k)!}}
=\frac{x^{n-1}}{(n-1)!}\sum_{k=0}^{n-1}{\binom{n-1}{k}\frac{\Phi_{k}(x)}{(-x)^{k}}}
.\end{equation*}
\end{theorem}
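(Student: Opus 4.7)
The proof essentially follows from Theorem \ref{t 2.1} by stripping away constants, but a little care is needed to account for constants that appear inside the inner integrals.

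The plan is to start from the reduction formula of Theorem \ref{t 2.1},
\begin{equation*}
\int{\cdots\int{f(x)\,dx^n}}
=\sum_{k=1}^{n}{(-1)^{n-k}\frac{x^{k-1}}{(k-1)!}\left(\int{\frac{x^{n-k}}{(n-k)!}f(x)\,dx}\right)}+C_n,
\end{equation*}
and replace each inner indefinite integral by its primitive. By definition, $\int \frac{x^{n-k}}{(n-k)!} f(x)\,dx = \frac{\Phi_{n-k}(x)}{(n-k)!} + \lambda_k$ for some constant $\lambda_k$. Substituting this back produces the claimed sum $\sum_{k=1}^{n}(-1)^{n-k}\frac{x^{k-1}}{(k-1)!}\frac{\Phi_{n-k}(x)}{(n-k)!}$ plus an extra term $\sum_{k=1}^{n}(-1)^{n-k}\lambda_k\frac{x^{k-1}}{(k-1)!}$ that is a polynomial of degree at most $n-1$ in $x$. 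Together with $C_n$, this forms another polynomial of the same shape as $C_n$, so it is absorbed into the constants of integration of the $n$-th indefinite integral. Passing to the primitive $F_n(x)$ then means setting all constants of integration equal to zero, which kills precisely this combined polynomial and leaves the first stated expression.

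Having established $F_n(x)=\sum_{k=1}^{n}(-1)^{n-k}\frac{x^{k-1}}{(k-1)!}\frac{\Phi_{n-k}(x)}{(n-k)!}$, I would derive the second expression by a direct algebraic manipulation: re-index via $j=n-k$, then factor out $\frac{x^{n-1}}{(n-1)!}$ and recognize $\frac{(n-1)!}{j!(n-1-j)!}=\binom{n-1}{j}$, together with $(-1)^{j}x^{-j}=1/(-x)^{j}$. This yields
\begin{equation*}
F_n(x)=\frac{x^{n-1}}{(n-1)!}\sum_{j=0}^{n-1}\binom{n-1}{j}\frac{\Phi_{j}(x)}{(-x)^{j}},
\end{equation*}
which is the second form.

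The only nontrivial point, and the one worth writing out carefully, is the constants argument: one has to justify that the $n-1$ arbitrary constants $\lambda_1,\dots,\lambda_n$ (together with $C_n$) produce exactly a generic polynomial of degree $n-1$, so that setting the primitive's constants to zero is equivalent to setting each $\lambda_k$ to zero. The remaining steps are routine index juggling.
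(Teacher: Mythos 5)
Your proposal is correct and follows essentially the same route as the paper, which likewise deduces Theorem \ref{t 2.3} directly from Theorem \ref{t 2.1} by setting all constants of integration to zero (the paper states this in one line; you simply make the absorption of the inner constants $\lambda_k$ into the degree-$(n-1)$ polynomial $C_n$ explicit, which is a worthwhile clarification). The only nit is the slip ``$n-1$ arbitrary constants $\lambda_1,\dots,\lambda_n$'' --- there are $n$ of them --- but this does not affect the argument.
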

\begin{proof}
Using the definition of the $n$-th primitive presented as well as the notation presented, we can see, from Theorem \ref{t 2.1}, that the $n$-th primitive of $f(x)$ can be expressed as stated by this theorem. 
\end{proof}
\begin{example}
The $n$-th primitive of $f(x)=1$ is 
\begin{equation}
F_n(x)=\frac{x^n}{n!}.
\end{equation}
\end{example}
\begin{example}
The $n$-th primitive of $f(x)=\ln x$ is 
\begin{equation}
F_n[\ln x](x)=F_{n+1}\left[\frac{1}{x}\right](x)
=\frac{x^{n}}{n!}\left[ \ln x +\sum_{k=1}^{n}{\binom{n}{k}\frac{(-1)^k}{k}} \right].
\end{equation}
\end{example}
In the following lemma, we present a modified version of the fundamental theorem of calculus which is needed to derive a generalization of the fundamental theorem of calculus for repeated integrals. 
\begin{lemma} \label{l 2.1}
We have that 
\begin{equation*}
\int_{a}^{b}{F_{n}(x)dx}
=F_{n+1}(x)|_{a}^{b}=F_{n+1}(b)-F_{n+1}(a)
.\end{equation*}
\end{lemma}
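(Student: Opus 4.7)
The plan is to reduce the lemma to the ordinary fundamental theorem of calculus by showing that $F_{n+1}$ is an antiderivative of $F_n$. The only subtlety is the bookkeeping about constants of integration that is built into the definition of the $n$-th primitive.

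First, I would unwind the definition of $F_{n+1}$. By the definition given just before Theorem~\ref{t 2.3}, $F_{n+1}(x)$ is the expression for the $(n+1)$-th indefinite integral of $f(x)$ with every constant of integration set to zero. Computing this $(n+1)$-th integral recursively as $I_{n+1} = \int I_n \, dx$, and imposing the condition that each of the $n+1$ constants vanishes, forces in particular that $F_{n+1}$ arises as a specific antiderivative of $F_n(x)$. Consequently $\frac{d}{dx} F_{n+1}(x) = F_n(x)$.

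This last identity can also be read off directly from the closed form in Theorem~\ref{t 2.3}: differentiating $F_{n+1}(x) = \sum_{k=1}^{n+1}(-1)^{n+1-k}\frac{x^{k-1}}{(k-1)!}\frac{\Phi_{n+1-k}(x)}{(n+1-k)!}$ term by term, using $\Phi_j'(x) = x^j f(x)$ and the product rule, yields a telescoping sum whose surviving terms reproduce exactly the expression for $F_n(x)$ given in Theorem~\ref{t 2.3}. I would include this verification as a sanity check, since it does not rely on any inductive definition and keeps the argument self-contained.

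Once $F_{n+1}'(x) = F_n(x)$ is established, the ordinary fundamental theorem of calculus gives
\begin{equation*}
\int_{a}^{b} F_n(x)\, dx = F_{n+1}(b) - F_{n+1}(a) = F_{n+1}(x)\big|_{a}^{b},
\end{equation*}
which is exactly the claim. The main (and really only) obstacle is making precise the book-keeping claim that setting all constants of integration to zero at every stage is consistent with the relation $F_{n+1}' = F_n$; once that definitional point is settled, the rest is immediate from the standard fundamental theorem of calculus.
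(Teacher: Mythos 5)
Your proof is correct and follows the same route as the paper: both arguments reduce the lemma to the observation that $F_{n+1}$ is an antiderivative of $F_n$ and then invoke the ordinary fundamental theorem of calculus. Your extra verification via the closed form of Theorem~\ref{t 2.3} (where the derivative of the $\Phi$-terms cancels by the vanishing alternating binomial sum rather than by telescoping, strictly speaking) is a sound addition but does not change the underlying argument.
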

\begin{proof}
$F_{n+1}(x)$ is the primitive of $F_{n}(x)$. Hence, from the fundamental theorem of calculus, we have the lemma. 
\end{proof}
In what follows, we prove a generalization of the fundamental theorem of calculus. 
\begin{theorem} \label{t 2.2}
Let $f(x)$ be a function defined in the interval $[a,b]$, for any $n\in\mathbb{N^*}$, we have
\begin{equation*}
\int_{a}^{b}{\cdots \int_{a}^{x_3}{\int_{a}^{x_2}{f(x_1)dx_1dx_2 \cdots dx_n}}}
=F_n(b)-\sum_{k=1}^{n}{\frac{(b-a)^{n-k}}{(n-k)!}F_k(a)}
.\end{equation*}
\end{theorem}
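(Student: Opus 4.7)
The plan is to prove Theorem \ref{t 2.2} by induction on $n$, using Lemma \ref{l 2.1} as the engine that converts each integration step into a primitive evaluation.

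For the base case $n=1$, the claimed identity reads $\int_a^b f(x_1)\,dx_1 = F_1(b) - F_1(a)$, which is precisely the classical fundamental theorem of calculus (the $k=1$ term in the sum gives $\frac{(b-a)^0}{0!}F_1(a) = F_1(a)$).

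For the inductive step, I would write the $(n+1)$-fold definite integral as the outer integral of the $n$-fold inner integral viewed as a function of its upper limit:
\begin{equation*}
I_{n+1,a,b} = \int_a^b I_{n,a,x}\,dx.
\end{equation*}
Applying the induction hypothesis with $b$ replaced by $x$ gives
\begin{equation*}
I_{n,a,x} = F_n(x) - \sum_{k=1}^{n} \frac{(x-a)^{n-k}}{(n-k)!} F_k(a),
\end{equation*}
so that
\begin{equation*}
I_{n+1,a,b} = \int_a^b F_n(x)\,dx - \sum_{k=1}^{n} \frac{F_k(a)}{(n-k)!} \int_a^b (x-a)^{n-k}\,dx.
\end{equation*}
By Lemma \ref{l 2.1} the first integral equals $F_{n+1}(b) - F_{n+1}(a)$, while the elementary integral $\int_a^b (x-a)^{n-k}\,dx = \frac{(b-a)^{n-k+1}}{n-k+1}$ simplifies the sum term to $\frac{(b-a)^{n-k+1}}{(n-k+1)!} F_k(a)$.

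Combining everything yields
\begin{equation*}
I_{n+1,a,b} = F_{n+1}(b) - F_{n+1}(a) - \sum_{k=1}^{n} \frac{(b-a)^{n+1-k}}{(n+1-k)!} F_k(a).
\end{equation*}
The remaining bookkeeping step, which is the only subtle point, is to absorb the isolated $F_{n+1}(a)$ term into the sum by recognizing it as the $k=n+1$ summand, for which $\frac{(b-a)^{0}}{0!} = 1$. This produces exactly $F_{n+1}(b) - \sum_{k=1}^{n+1} \frac{(b-a)^{n+1-k}}{(n+1-k)!} F_k(a)$, completing the induction. The argument is essentially routine once Lemma \ref{l 2.1} is in hand; the main thing to watch is the index shift that merges the stand-alone primitive evaluated at $a$ into the sum with the correct binomial-type coefficient.
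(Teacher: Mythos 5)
Your proposal is correct and follows essentially the same route as the paper: induction on $n$, peeling off the outermost integral, applying the induction hypothesis with upper limit $x$, invoking Lemma \ref{l 2.1} for $\int_a^b F_n(x)\,dx$, computing $\int_a^b (x-a)^{n-k}\,dx$ elementarily, and absorbing $F_{n+1}(a)$ as the $k=n+1$ summand. No substantive differences.
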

\begin{remark}
{\em By applying the binomial theorem, this theorem can be expressed as follows:  }
\begin{equation*}
\int_{a}^{b}{\cdots \int_{a}^{x_3}{\int_{a}^{x_2}{f(x_1)dx_1dx_2 \cdots dx_n}}}
=F_n(b)-\sum_{k=1}^{n}{F_k(a)\frac{b^{n-k}}{(n-k)!}\sum_{\ell=0}^{n-k}{\binom{n-k}{\ell}(-1)^{\ell}\left(\frac{a}{b}\right)^{\ell}}}
.\end{equation*}
\end{remark}
\begin{proof}
1. Base case: verify true for $n=1$. 
$$
F_1(b)-\sum_{k=1}^{1}{\frac{(b-a)^{1-k}}{(1-k)!}F_k(a)}
=F_1(b)-F_1(a)
=\int_{a}^{b}{f(x_1)dx_1}
.$$
2. Induction hypothesis: assume the statement is true until $n$.
\begin{equation*}
\int_{a}^{b}{\cdots \int_{a}^{x_3}{\int_{a}^{x_2}{f(x_1)dx_1dx_2 \cdots dx_n}}}
=F_n(b)-\sum_{k=1}^{n}{\frac{(b-a)^{n-k}}{(n-k)!}F_k(a)}
.\end{equation*}
3. Induction step: we will show that this statement is true for $(n+1)$. \\
We have to show the following statement to be true:
\begin{equation*}
\int_{a}^{b}{\cdots \int_{a}^{x_3}{\int_{a}^{x_2}{f(x_1)dx_1dx_2 \cdots dx_{n+1}}}}
=F_{n+1}(b)-\sum_{k=1}^{n+1}{\frac{(b-a)^{n-k+1}}{(n-k+1)!}F_k(a)}
.\end{equation*} 
$$ \\ $$  
\begin{equation*}
\int_{a}^{b}{\cdots \int_{a}^{x_3}{\int_{a}^{x_2}{f(x_1)dx_1dx_2 \cdots dx_{n+1}}}}
=\int_{a}^{b}{\left[\int_{a}^{x_{n+1}}{\cdots \int_{a}^{x_3}{\int_{a}^{x_2}{f(x_1)dx_1dx_2 \cdots dx_n}}}\right]dx_{n+1}}
.\end{equation*} 
Letting $t=x_{n+1}$ and applying the induction hypothesis, 
\begin{equation*}
\begin{split}
\int_{a}^{b}{\cdots \int_{a}^{x_3}{\int_{a}^{x_2}{f(x_1)dx_1dx_2 \cdots dx_{n+1}}}}
&=\int_{a}^{b}{\left[F_n(x_{n+1})-\sum_{k=1}^{n}{\frac{(x_{n+1}-a)^{n-k}}{(n-k)!}F_k(a)}\right]dx_{n+1}} \\
&=\int_{a}^{b}{F_n(t)dt}
-
\sum_{k=1}^{n}{F_k(a)\int_{a}^{b}{\frac{(t-a)^{n-k}}{(n-k)!}dt}}
.\end{split}
\end{equation*}
Let us note that, letting $u=(t-a)$ and, hence, $du=dt$, we have 
$$
\int_{a}^{b}{\frac{(t-a)^{n-k}}{(n-k)!}dt}
=\int_{a}^{b}{\frac{u^{n-k}}{(n-k)!}du}
=\left[ \frac{u^{n-k+1}}{(n-k+1)!} \right]_{a}^{b}
=\frac{(b-a)^{n-k+1}}{(n-k+1)!}
.$$
Using Lemma 2.1, we get 
\begin{equation*}
\begin{split}
\int_{a}^{b}{\cdots \int_{a}^{x_3}{\int_{a}^{x_2}{f(x_1)dx_1dx_2 \cdots dx_{n+1}}}}
&=F_{n+1}(b)-F_{n+1}(a)
-\sum_{k=1}^{n}{\frac{(b-a)^{n-k+1}}{(n-k+1)!}F_k(a)} \\
&=F_{n+1}(b)
-\sum_{k=1}^{n+1}{\frac{(b-a)^{n-k+1}}{(n-k+1)!}F_k(a)}
.\end{split}
\end{equation*}
The case for $(n+1)$ is proven. Hence, the theorem is proven by induction. 
\end{proof}
\begin{proposition} \label{definite repeated integral of 1}
{\em For $f(x)=1$, from Theorem \ref{t 2.2}, we have that }
\begin{equation*}
\int_{a}^{b}{\cdots \int_{a}^{x_3}{\int_{a}^{x_2}{1 \, dx_1dx_2 \cdots dx_n}}}
=\frac{(b-a)^n}{n!}.
\end{equation*}
\end{proposition}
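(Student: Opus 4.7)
The plan is to invoke Theorem~\ref{t 2.2} directly with $f(x) = 1$. From Proposition~\ref{repeated integral of 1} (or, equivalently, the example following Theorem~\ref{t 2.3}) we already know that the $n$-th primitive of the constant function is $F_n(x) = x^n/n!$. Substituting this into the formula of Theorem~\ref{t 2.2} would give
$$\int_a^b \cdots \int_a^{x_2} 1\, dx_1 \cdots dx_n = \frac{b^n}{n!} - \sum_{k=1}^n \frac{(b-a)^{n-k}}{(n-k)!}\cdot\frac{a^k}{k!}.$$

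The remainder of the proof is just to recognize that this expression equals $(b-a)^n/n!$. Multiplying through by $n!$ and using $\binom{n}{k} = n!/(k!(n-k)!)$, the identity to verify becomes
$$b^n - \sum_{k=1}^n \binom{n}{k}(b-a)^{n-k}a^k = (b-a)^n,$$
which is immediate from the binomial expansion $b^n = \bigl((b-a)+a\bigr)^n = \sum_{k=0}^n \binom{n}{k}(b-a)^{n-k}a^k$: isolate the $k=0$ term, which is precisely $(b-a)^n$, and move everything else to the opposite side.

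There is no real obstacle. The only point worth a brief check is the index range: the sum produced by Theorem~\ref{t 2.2} starts at $k=1$ rather than $k=0$, and it is exactly the missing $k=0$ binomial term that furnishes the surviving $(b-a)^n$. Thus the proposition is essentially a one-line corollary of Theorem~\ref{t 2.2} combined with the binomial theorem.
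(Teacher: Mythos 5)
Your proposal is correct and follows the same route as the paper: apply Theorem \ref{t 2.2} with $F_k(x)=x^k/k!$, rewrite the sum with binomial coefficients, and recognize it as the binomial expansion of $((b-a)+a)^n$ missing its $k=0$ term $(b-a)^n$. No substantive difference from the paper's argument.
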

\begin{proof}
By applying Theorem \ref{t 2.2} for $f(x)=1$, we have that
\begin{equation*}
\begin{split}
\int_{a}^{b}{\cdots \int_{a}^{x_3}{\int_{a}^{x_2}{1 \, dx_1dx_2 \cdots dx_n}}}
&=\frac{b^n}{n!}-\sum_{k=1}^{n}{\frac{(b-a)^{n-k}}{(n-k)!}\frac{a^k}{k!}}\\
&=\frac{b^n}{n!}-\frac{1}{n!}\sum_{k=1}^{n}{\binom{n}{k}(b-a)^{n-k}a^k}\\
&=\frac{b^n}{n!}-\left[\frac{(b-a+a)^n}{n!}-\frac{(b-a)^n}{n!}\right]\\
&=\frac{(b-a)^n}{n!}
.\end{split}
\end{equation*}
\end{proof}
\subsubsection{Definite repeated integrals}
In this section, we develop a reduction formula for definite repeated integrals which expresses such integrals in terms of definite single integrals. According to Theorem \ref{definite repeated integral}, it is enough to determine an expression for the definite integral of $x^k f(x)$ in function of $k$ to compute the $n$-th definite integral of $f(x)$. Hence, the problem is reduced from computing an $n$-th integral to computing a single integral. 
\begin{theorem} \label{definite repeated integral}
Let $f(x)$ be a function defined in the interval $[a,b]$, for any $n\in \mathbb{N^*}$, we have
\begin{equation*}
\int_{a}^{b}{\cdots \int_{a}^{x_3}{\int_{a}^{x_2}{f(x_1)dx_1dx_2 \cdots dx_n}}}
=\frac{b^{n-1}}{(n-1)!}\sum_{k=0}^{n-1}{\binom{n-1}{k}\frac{(-1)^k}{b^k}\int_{a}^{b}{x^k f(x)\,dx}}
.\end{equation*}
\end{theorem}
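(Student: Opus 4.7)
The plan is to combine the generalization of the fundamental theorem of calculus (Theorem \ref{t 2.2}) with the explicit formula for the $n$-th primitive (Theorem \ref{t 2.3}), and then to verify that the extraneous terms collapse via a binomial identity.

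First, I would invoke Theorem \ref{t 2.2} to write
\begin{equation*}
I_{n,a,b}=F_n(b)-\sum_{j=1}^{n}{\frac{(b-a)^{n-j}}{(n-j)!}F_j(a)}.
\end{equation*}
Letting $\Phi_k(x)$ denote the primitive of $x^k f(x)$ (as in Theorem \ref{t 2.3}), I would then expand both $F_n(b)$ and each $F_j(a)$ using Theorem \ref{t 2.3}, and split $\Phi_k(b)=\Phi_k(a)+\int_a^b x^k f(x)\,dx$ inside the expression for $F_n(b)$. This immediately produces the target sum $\frac{b^{n-1}}{(n-1)!}\sum_{k=0}^{n-1}\binom{n-1}{k}\frac{(-1)^k}{b^k}\int_a^b x^k f(x)\,dx$, and reduces the theorem to the identity
\begin{equation*}
\frac{b^{n-1}}{(n-1)!}\sum_{k=0}^{n-1}\binom{n-1}{k}\frac{(-1)^k \Phi_k(a)}{b^k}
=\sum_{j=1}^{n}\frac{(b-a)^{n-j}}{(n-j)!}F_j(a).
\end{equation*}

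The main obstacle is verifying this cancellation cleanly. The plan is to substitute the Theorem \ref{t 2.3} expansion for $F_j(a)$ on the right-hand side, then swap the order of summation so that $\Phi_k(a)$ becomes the outer index. After the change of variable $i=j-1-k$, the inner sum takes the form
\begin{equation*}
\frac{1}{(n-k-1)!}\sum_{i=0}^{n-k-1}\binom{n-k-1}{i}(b-a)^{n-k-1-i}a^{i},
\end{equation*}
which by the binomial theorem equals $b^{n-k-1}/(n-k-1)!$. Collecting coefficients of $\Phi_k(a)$ then gives exactly the left-hand side, completing the identification.

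I would finish by noting briefly that this reduces the task of computing any $n$-th definite repeated integral to evaluating the single definite integrals $\int_a^b x^k f(x)\,dx$ for $0\le k\le n-1$, in direct analogy with Theorem \ref{t 2.1} for the indefinite case.
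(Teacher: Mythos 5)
Your proposal is correct, and I verified the key cancellation: after substituting the Theorem \ref{t 2.3} expansion of $F_j(a)$ into $\sum_{j=1}^{n}\frac{(b-a)^{n-j}}{(n-j)!}F_j(a)$, swapping the summation order, and setting $i=j-1-k$, the inner sum is indeed $\frac{1}{(n-k-1)!}\sum_{i=0}^{n-k-1}\binom{n-k-1}{i}(b-a)^{n-k-1-i}a^{i}=\frac{b^{n-k-1}}{(n-k-1)!}$, which reproduces the coefficient of $\Phi_k(a)$ on the left-hand side exactly. However, your route is genuinely different from the paper's. The paper proves the theorem directly by induction on $n$: the inductive step integrates the order-$n$ formula once more, applies integration by parts with $du=t^{n-k-1}dt$ and $v=\int_a^t x^k f(x)\,dx$, and collapses the leftover term using $\sum_{k=0}^{n-1}\binom{n}{k}(-1)^k=-(-1)^n$ — essentially a definite-integral replay of the proof of Theorem \ref{t 2.1}. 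Your derivation instead treats the theorem as a corollary of Theorem \ref{t 2.2} (the generalized fundamental theorem of calculus) plus Theorem \ref{t 2.3} (the $n$-th primitive formula), with the splitting $\Phi_k(b)=\Phi_k(a)+\int_a^b x^k f(x)\,dx$ isolating the target sum. What your approach buys is economy and a clearer logical architecture — it makes explicit that the definite reduction formula is exactly ``indefinite reduction $+$ generalized FTC,'' with all boundary contributions at $a$ cancelling via the binomial theorem; what the paper's approach buys is self-containedness (no dependence on the chain Theorem \ref{t 2.1} $\to$ Theorem \ref{t 2.3}) and a proof template that transfers directly to the non-integer-order remark following the theorem. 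Since Theorems \ref{t 2.2} and \ref{t 2.3} both precede this result in the paper, there is no circularity in your argument.
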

\begin{proof}
1. Base case: verify true for $n=1$. 
\begin{equation*}
\frac{b^{1-1}}{(1-1)!}\sum_{k=0}^{1-1}{\binom{1-1}{k}\frac{(-1)^k}{b^k}\int_{a}^{b}{x^k f(x)\,dx}}
=\int_{a}^{b}{ f(x)\,dx}
.\end{equation*}
2. Induction hypothesis: assume the statement is true until $n$.
\begin{equation*}
\int_{a}^{b}{\cdots \int_{a}^{x_3}{\int_{a}^{x_2}{f(x_1)dx_1dx_2 \cdots dx_n}}}
=\frac{b^{n-1}}{(n-1)!}\sum_{k=0}^{n-1}{\binom{n-1}{k}\frac{(-1)^k}{b^k}\int_{a}^{b}{x^k f(x)\,dx}}
.\end{equation*}
3. Induction step: we will show that this statement is true for $(n+1)$. \\
We have to show the following statement to be true:
\begin{equation*}
\int_{a}^{b}{\cdots \int_{a}^{x_3}{\int_{a}^{x_2}{f(x_1)dx_1dx_2 \cdots dx_{n+1}}}}
=\frac{b^{n}}{n!}\sum_{k=0}^{n}{\binom{n}{k}\frac{(-1)^k}{b^k}\int_{a}^{b}{x^k f(x)\,dx}}
.\end{equation*}
\\
To be concise, we denote the left hand side term by $I$. By applying the induction hypothesis, 
\begin{equation*}
\begin{split}
I
&=\int_{a}^{b}{\left[\int_{a}^{x_{n+1}}{\cdots \int_{a}^{x_3}{\int_{a}^{x_2}{f(x_1)dx_1dx_2 \cdots dx_n}}}\right]dx_{n+1}} \\ 
&=\int_{a}^{b}{\left[\frac{(x_{n+1})^{n-1}}{(n-1)!}\sum_{k=0}^{n-1}{\binom{n-1}{k}\frac{(-1)^k}{(x_{n+1})^k}\int_{a}^{x_{n+1}}{x^k f(x)\,dx}}\right]dx_{n+1}} \\ 
&=\frac{1}{(n-1)!}\sum_{k=0}^{n-1}{\binom{n-1}{k}(-1)^k\int_{a}^{b}{(x_{n+1})^{n-k-1}\int_{a}^{x_{n+1}}{x^k f(x)\,dx}dx_{n+1}}} 
.\end{split}
\end{equation*}
Let $t=x_{n+1}$. Setting $du=t^{n-k-1}dt$ and $v=\int_{a}^{t}{x^k f(x)\,dx}$, then applying integration by parts, we have 
\begin{equation*}
\begin{split}
\int_{a}^{b}{vdu} 
=[uv]_{a}^{b}-\int_{a}^{b}{udv} 
&=\left[\frac{t^{n-k}}{n-k}\int_{a}^{t}{x^k f(x)\,dx}\right]_{t=a}^{t=b}-\int_{a}^{b}{\frac{t^{n-k}}{n-k}[t^k f(t)]\,dt}\\
&=\frac{b^{n-k}}{n-k}\int_{a}^{b}{x^k f(x)\,dx}
-\int_{a}^{b}{\frac{t^{n}}{n-k} f(t)\,dt}
.\end{split}
\end{equation*}
Hence, substituting back, we get 
\begin{equation*}
\begin{split}
I
&\resizebox{\linewidth-15pt}{!}{$\displaystyle{=\frac{1}{(n-1)!}\sum_{k=0}^{n-1}{\binom{n-1}{k}(-1)^k\frac{b^{n-k}}{n-k}\int_{a}^{b}{x^k f(x)\,dx}} 
-\frac{1}{(n-1)!}\sum_{k=0}^{n-1}{\binom{n-1}{k}(-1)^k\int_{a}^{b}{\frac{t^{n}}{n-k} f(t)\,dt}}}$} \\
&=\frac{b^n}{n!}\sum_{k=0}^{n-1}{\binom{n}{k}\frac{(-1)^k}{b^k}\left(\int_{a}^{b}{x^k f(x)\, dx}\right)}
-\left(\int_{a}^{b}{\frac{t^n}{n!} f(t)\, dt}\right) \sum_{k=0}^{n-1}{\binom{n}{k} (-1)^k}
.\end{split}
\end{equation*}
Noticing that 
$$
\sum_{k=0}^{n-1}{\binom{n}{k} (-1)^k}=\sum_{k=0}^{n}{\binom{n}{k} (-1)^k}-(-1)^n
=-(-1)^n
$$
and that 
$$
\frac{b^n}{n!}\sum_{k=n}^{n}{\binom{n}{k}\frac{(-1)^k}{b^k}\left(\int_{a}^{b}{x^k f(x)\, dx}\right)}
=(-1)^n \left(\int_{a}^{b}{\frac{t^n}{n!} f(t)\, dt}\right), 
$$
then substituting back, we obtain the theorem. 
\end{proof}
\begin{remark}
Using Theorem \ref{definite repeated integral}, we can provide an additional proof for Proposition \ref{definite repeated integral of 1}:  
\begin{equation*}
\begin{split}
\int_{a}^{b}{\cdots \int_{a}^{x_3}{\int_{a}^{x_2}{1\,dx_1dx_2 \cdots dx_n}}}
&=\frac{b^{n-1}}{n!}\sum_{k=0}^{n-1}{\binom{n}{k+1}\frac{(-1)^k}{b^k}[b^{k+1}-a^{k+1}]} \\
&=\frac{b^{n}}{n!}\left[\sum_{k=1}^{n}{\binom{n}{k}(-1)^{k+1}}-\sum_{k=1}^{n}{\binom{n}{k}(-1)^{k+1}\left(\frac{a}{b}\right)^{k}}\right] \\
&=\frac{1}{n!}\sum_{k=0}^{n}{\binom{n}{k}(-1)^{k}b^{n-k}a^{k}} \\ 
&=\frac{(b-a)^n}{n!}.
\end{split}
\end{equation*}
\end{remark}
\begin{remark}
{\em Theorem \ref{definite repeated integral} can also be generalized to non-integer orders to obtain: }
\begin{equation*}
I_{\alpha,a,b}
=\frac{b^{\alpha-1}}{\Gamma(\alpha)}\sum_{k=0}^{\infty}{\binom{\alpha-1}{k}\frac{(-1)^k}{b^k}\int_{a}^{b}{x^k f(x)\,dx}}
\end{equation*}
{\em where} 
$$
\binom{\alpha-1}{k}=\frac{(\alpha-1)\cdots(\alpha-k)}{k!}.
$$
\end{remark}
\subsection{Recurrent integrals}
In this section, we present formulas for computing indefinite and definite recurrent integrals of a function in terms of the primitive of this same function. We also introduce the concept of recurrent primitive and present a generalization of the fundamental theorem of calculus for recurrent integrals. We finish by deriving a few partition identities from this type of integrals. 
\subsubsection{Indefinite recurrent integrals}
Now we prove a formula for expressing indefinite recurrent integrals in terms of single integrals. We can call this the reduction formula for indefinite recurrent integrals. 
\begin{theorem} \label{t 2.4}
Let $f(x)$ be a function of $x$, for any $n\in\mathbb{N^*}$, we have that 
\begin{equation*}
\int{f(x) \cdots \int{f(x)\int{f(x)\,dx^n}}}
=\frac{1}{n!}\left(\int{f(x)dx}\right)^n
+\sum_{i=0}^{n-1}{c_i\frac{1}{i!}\left(\int{f(x)dx}\right)^i}
.\end{equation*} 
\end{theorem}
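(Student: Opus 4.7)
My plan is to prove Theorem \ref{t 2.4} by induction on $n$, exploiting the recursive definition $J_{n+1}=\int f(x)J_n\,dx$ together with the elementary antiderivative identity $\int f(x)\tfrac{F(x)^i}{i!}\,dx=\tfrac{F(x)^{i+1}}{(i+1)!}+\text{const}$, where I write $F(x):=\int f(x)\,dx$ for a chosen primitive of $f$.

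For the base case $n=1$, the left-hand side is simply $\int f(x)\,dx = F(x)+c_0$, which matches the right-hand side $\tfrac{1}{1!}F(x)^1 + c_0\tfrac{1}{0!}F(x)^0$. For the inductive step, assuming the formula holds at level $n$, I would write
\begin{equation*}
J_{n+1}=\int f(x)J_n\,dx
=\int f(x)\left[\frac{F(x)^n}{n!}+\sum_{i=0}^{n-1}c_i\frac{F(x)^i}{i!}\right]dx.
\end{equation*}
Since $F'(x)=f(x)$, each term integrates in closed form:
\begin{equation*}
\int f(x)\frac{F(x)^i}{i!}\,dx=\frac{F(x)^{i+1}}{(i+1)!}+\text{(constant)}.
\end{equation*}
Applying this termwise and collecting the new constant of integration yields
\begin{equation*}
J_{n+1}=\frac{F(x)^{n+1}}{(n+1)!}+\sum_{i=0}^{n-1}c_i\frac{F(x)^{i+1}}{(i+1)!}+c',
\end{equation*}
and after the reindexing $j=i+1$ and a relabelling of constants ($b_0:=c'$, $b_j:=c_{j-1}$ for $j\geq 1$), this becomes $\tfrac{1}{(n+1)!}F(x)^{n+1}+\sum_{j=0}^{n}b_j\tfrac{F(x)^j}{j!}$, which is exactly the statement at level $n+1$.

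There is no real obstacle here; the argument is essentially a clean unwinding of the recursion against the fact that a recurrent integral with integrand $f(x)$ is really a repeated integration of $f$ against its own primitive. The only delicate points are (i) bookkeeping of the constants of integration so that the new constant absorbs cleanly into the polynomial $\sum b_j F^j/j!$, and (ii) confirming that the choice of primitive $F$ (which is only defined up to an additive constant) does not affect the statement — a shift $F\mapsto F+\gamma$ changes the sum $\sum c_i F^i/i!$ into another polynomial of the same form by the binomial theorem, so the family of possible right-hand sides is invariant. These are routine verifications, so the proof reduces to the induction sketched above.
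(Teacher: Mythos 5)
Your proof is correct, but it takes a genuinely different route from the paper. You argue by direct induction on $n$ using the recursion $J_{n+1}=\int f(x)J_n\,dx$ together with the identity $\int f(x)\tfrac{F(x)^i}{i!}\,dx=\tfrac{F(x)^{i+1}}{(i+1)!}+\text{const}$, and your bookkeeping of the constants (the reindexing $b_0=c'$, $b_j=c_{j-1}$) is exactly right. The paper instead makes the substitution $X=\int f(x)\,dx$, $dX=f(x)\,dx$, which turns the recurrent integral of $f$ into the $n$-fold repeated integral of the constant function $1$ in the variable $X$, and then invokes Proposition \ref{repeated integral of 1} to conclude that the result is $\tfrac{X^n}{n!}+C_n(X)$. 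The paper's route is shorter and makes structurally transparent why the answer is ``$x^n/n!$ plus a polynomial constant term, evaluated at $F(x)$,'' but it leans on the earlier reduction machinery (Theorem \ref{t 2.1} via Proposition \ref{repeated integral of 1}) and treats the change of variables for indefinite integrals somewhat informally. Your induction is self-contained, handles the constants of integration explicitly at each step, and your closing observation that a shift $F\mapsto F+\gamma$ preserves the form of the right-hand side by the binomial theorem is a worthwhile point the paper does not address. Either argument is acceptable.
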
 
\begin{remark}
{\em Using the notation introduced, this theorem can be rewritten as }
\begin{equation*}
\int{f(x) \cdots \int{f(x)\int{f(x)\,dx^n}}}
=\frac{1}{n!}\left(F(x)\right)^n
+C_n(F(x))
.\end{equation*}
\end{remark}
\begin{proof}
Let $X=\int{f(x)\,dx}$ and $dX=f(x)\,dx$, we can rewrite the recurrent integral as follows: 
\begin{equation*}
\int{f(x) \cdots \int{f(x)\int{f(x)\,dx^n}}}
=\int{ \cdots \int{\int{1\,dX^n}}}.
\end{equation*}
Using Proposition \ref{repeated integral of 1}, we get 
\begin{equation*}
\int{f(x) \cdots \int{f(x)\int{f(x)\,dx^n}}}
=\frac{X^n}{n!}+C_{n}(X)
=\frac{1}{n!}\left(\int{f(x)dx}\right)^n
+\sum_{i=0}^{n-1}{c_i\frac{1}{i!}\left(\int{f(x)dx}\right)^i}.
\end{equation*}
\end{proof}
\begin{example} 
{\em For the recurrent integral of order $3$ of $\sin x$, we have }
\begin{equation*}
\begin{split}
\int{\sin x \int{\sin x \int{\sin x \, dx}dx}dx}
&=\int{\sin x \int{\sin x \left(-\cos x +c_2\right)\,dx}dx} \\
&=\int{\sin x \left(\frac{\cos^2 x}{2}-c_2\cos x +c_1\right)\,dx} \\
&=-\frac{\cos^3 x}{6}+c_2\frac{\cos^2 x}{2} -c_1 \cos x+c_0 \\
&=\frac{1}{3!}\left(\int{\sin x \,dx}\right)^3
+\sum_{i=0}^{2}{c_i\frac{1}{i!}\left(\int{\sin x \,dx}\right)^i}
.\end{split}
\end{equation*}
\end{example}
\begin{example} 
{\em For the recurrent integral of order $3$ of $e^x$, we have }
\begin{equation*}
\begin{split}
\int{e^x \int{e^x \int{e^x \, dx}dx}dx}
&=\int{e^x \int{e^x \left(e^x +c_2\right)\,dx}dx} \\
&=\int{e^x \left(\frac{e^{2x}}{2}+c_2e^x +c_1\right)\,dx} \\
&=\frac{e^{3x}}{6}+c_2\frac{e^{2x}}{2}+c_1 e^x+c_0 \\
&=\frac{1}{3!}\left(\int{e^x \,dx}\right)^3
+\sum_{i=0}^{2}{c_i\frac{1}{i!}\left(\int{e^x \,dx}\right)^i}
.\end{split}
\end{equation*}
\end{example}
\subsubsection{Relation between definite and indefinite recurrent integrals}
We begin by introducing the concept of recurrent primitive. 
\begin{definition}
We define the recurrent primitive of order $n$ of the function $f(x)$, denoted $\rho_n(x)$, as the resulting expression of the $n$-th indefinite recurrent integral of $f(x)$ excluding the constants of integration (In other words, the case where the constants of integration are zero). 
\end{definition}
\begin{theorem} \label{t 2.6}
The $n$-th recurrent primitive of $f(x)$ can be expressed as follows: 
\begin{equation*}
\rho_{n}(x)
=\frac{1}{n!}\left(\rho_{1}(x)\right)^n
=\frac{1}{n!}\left(F(x)\right)^n
.\end{equation*} 
\end{theorem}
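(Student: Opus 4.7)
The plan is to derive this identity as a direct corollary of Theorem \ref{t 2.4}, which has already done all of the real work. The definition of $\rho_n(x)$ instructs me to take the expression produced by the $n$-th indefinite recurrent integral and drop all constants of integration, so I simply need to read off the constant-free part of the right-hand side of Theorem \ref{t 2.4}.

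First I would recall Theorem \ref{t 2.4} in the form
\begin{equation*}
\int{f(x) \cdots \int{f(x)\int{f(x)\,dx^n}}}
=\frac{1}{n!}\left(\int{f(x)\,dx}\right)^n
+\sum_{i=0}^{n-1}{c_i\frac{1}{i!}\left(\int{f(x)\,dx}\right)^i}.
\end{equation*}
Here the $c_i$'s are the arbitrary constants of integration, and the first term is the unique component that does not depend on any $c_i$. By the definition of $\rho_n(x)$, setting $c_0=c_1=\cdots=c_{n-1}=0$ kills the sum on the right and leaves precisely
\begin{equation*}
\rho_n(x)=\frac{1}{n!}\left(\int{f(x)\,dx}\right)^n=\frac{1}{n!}\bigl(F(x)\bigr)^n,
\end{equation*}
using the definition $F(x)=\int f(x)\,dx$ with zero constant.

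Finally, to obtain the middle expression $\frac{1}{n!}(\rho_1(x))^n$, I would specialize the formula to $n=1$, which yields $\rho_1(x)=F(x)$, and substitute. There is essentially no obstacle here; the only point to be careful about is ensuring that ``excluding the constants of integration'' is applied consistently, that is, both the outer constants $c_i$ and the constant hidden inside $F(x)$ itself are set to zero, so that no spurious terms survive the substitution $X=F(x)$ used inside the proof of Theorem \ref{t 2.4}. Once this bookkeeping is clear, the statement follows immediately.
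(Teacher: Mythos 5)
Your proposal is correct and follows exactly the paper's own argument: read off the constant-free part of Theorem \ref{t 2.4} using the definition of the recurrent primitive, then identify $\rho_1(x)$ with $F(x)$. The extra remark about setting the constant inside $F(x)$ to zero as well is sensible bookkeeping but does not change the argument.
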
 
\begin{proof}
Using the definition of the $n$-th recurrent primitive presented as well as the notation presented, we can see, from Theorem \ref{t 2.4}, that the $n$-th recurrent primitive of $f(x)$ can be expressed as follows: 
\begin{equation*}
\rho_{n}(x)
=\frac{1}{n!}\left(F(x)\right)^n
.\end{equation*}
Noticing that $\rho_1(x)$ is simply $F(x)$, the proof is complete. 
\end{proof}
\noindent This definition of $\rho_n(x)$ will be used in the next section to derive a reduction formula for the definite recurrent integral of a given function $f(x)$. 

Now we prove a variant of the fundamental theorem of calculus for recurrent integration. This is needed to prove an expression for definite recurrent integrals in terms of indefinite recurrent integrals. 
\begin{lemma} \label{l 2.3}
We have that 
\begin{equation*}
\int_{a}^{b}{f(x)\rho_{n}(x)\,dx}
=\rho_{n+1}(b)-\rho_{n+1}(a)
.\end{equation*}
\end{lemma}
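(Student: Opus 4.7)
The plan is to show that $\rho_{n+1}(x)$ is an antiderivative of the integrand $f(x)\rho_n(x)$ on $[a,b]$, and then invoke the ordinary fundamental theorem of calculus. Since Theorem \ref{t 2.6} gives us the explicit closed form $\rho_n(x)=\frac{1}{n!}(F(x))^n$ with $F'(x)=f(x)$, the verification reduces to a single chain-rule computation.

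Concretely, I would first recall from Theorem \ref{t 2.6} that $\rho_n(x)=\frac{1}{n!}(F(x))^n$ and $\rho_{n+1}(x)=\frac{1}{(n+1)!}(F(x))^{n+1}$. Differentiating the latter via the chain rule yields
\begin{equation*}
\frac{d}{dx}\rho_{n+1}(x)
=\frac{n+1}{(n+1)!}(F(x))^{n}F'(x)
=\frac{1}{n!}(F(x))^{n}f(x)
=f(x)\rho_{n}(x).
\end{equation*}
Hence $\rho_{n+1}$ is a primitive of $f\rho_{n}$, and applying the classical fundamental theorem of calculus on $[a,b]$ immediately gives $\int_{a}^{b}f(x)\rho_{n}(x)\,dx=\rho_{n+1}(b)-\rho_{n+1}(a)$.

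An equivalent route, which I would mention as an alternative if a more intuitive argument is desired, is the substitution $u=F(x)$, $du=f(x)\,dx$: the integral transforms into $\int_{F(a)}^{F(b)}\frac{u^n}{n!}\,du$, which evaluates directly to $\frac{F(b)^{n+1}-F(a)^{n+1}}{(n+1)!}=\rho_{n+1}(b)-\rho_{n+1}(a)$. There is no real obstacle here — the lemma is essentially a packaged version of the chain rule — so the main care point is simply to confirm that the closed form from Theorem \ref{t 2.6} is being applied correctly and that the antiderivative matches the recurrent primitive of order $n+1$ as defined.
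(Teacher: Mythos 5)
Your proof is correct, and its overall strategy --- exhibit $\rho_{n+1}$ as an antiderivative of $f\rho_n$ and then apply the classical fundamental theorem of calculus --- is the same as the paper's. The one genuine difference lies in how the antiderivative property is justified. The paper does it in a single sentence straight from the recursive definition: since $J_{n+1}=\int f(x)J_n\,dx$, the constant-free version $\rho_{n+1}(x)$ is by construction a primitive of $f(x)\rho_n(x)$, and no closed form is needed. You instead route the argument through Theorem \ref{t 2.6}, writing $\rho_{n+1}(x)=\frac{1}{(n+1)!}(F(x))^{n+1}$ and checking $\frac{d}{dx}\rho_{n+1}=f\rho_n$ by the chain rule (your substitution $u=F(x)$ is the same computation in integral form). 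This is legitimate --- Theorem \ref{t 2.6} precedes the lemma in the paper, so there is no circularity --- and it has the small virtue of making the antiderivative claim explicitly verifiable rather than resting on the phrase ``is the primitive of by definition.'' The cost is that your proof depends on the closed form, whereas the paper's argument would survive even without Theorem \ref{t 2.6}; the definitional route is the more economical of the two.
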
 
\begin{proof}
By definition of a recurrent integral, $\rho_{n+1}(x)=\int{f(x)\rho_{n}(x) \,dx}$. Hence, $\rho_{n+1}(x)$ is the primitive of $f(x)\rho_{n}(x)$. Therefore, by the fundamental theorem of calculus, we obtain the lemma. 
\end{proof}
Before we can proceed, we need to present the concept of partition of an integer as partitions are involved in the generalized fundamental theorem of calculus for recurrent integrals. As defined by the author in \cite{RecurrentSums,PolynomialSums}, a partition is defined as follows: 
\begin{definition}
A partition of a non-negative integer $m$ is a set of positive integers whose sum equals $m$.
We can represent a partition of $m$ as an ordered set $(y_{k,1},\ldots,y_{k,m})$ that verifies
\begin{equation}
y_{k,1}+2y_{k,2}+ \cdots + my_{k,m}
=\sum_{i=1}^{m}{i\,y_{k,i}}
=m.
\end{equation}
\end{definition} 
\noindent The coefficient $y_{k,i}$ is the multiplicity of the integer $i$ in the $k$-th partition of $m$. Note that $0\leq y_{k,i}\leq m$ while $1\leq i \leq m$. Also note that the number of partitions of an integer $m$ is given by the partition function denoted $p(n)$. Another way of representing a partition of an integer is with a set $(\lambda_1, \ldots, \lambda_{\ell})$ that satisfies: 
\begin{equation}
\begin{cases}
\lambda_1+\cdots+\lambda_{\ell}=\sum_{i=1}^{\ell}{\lambda_i}=m, \\
\lambda_i\leq\lambda_{i+1}.
\end{cases}
\end{equation}
Note that the second condition is necessary to ensure a unique representation for each partition of the integer. The $\lambda_i$'s are called the parts of this partition and $\ell$ is called the length of this given partition. Note that $1\leq \lambda_i\leq m$ and that the $\lambda_i$'s are not necessarily all distinct. Also note that the partitions of an integer do not all have the same length. In terms of the first definition, $\ell$ is the sum of the multiplicities: $\ell=\sum_{i=1}^{m}{y_i}=y_1+\cdots+y_{m}$. 
As partitions are not a fundamental part of this article, we will not go into more details. However, for readers interested in a detailed explanation of partitions, see \cite{andrews1998theory}. 

Now that the concept of partition of an integer has been presented, we introduce the generalized fundamental theorem of calculus for recurrent integrals. 
\begin{theorem} \label{t 2.5}
Let $f(x)$ be a function defined in the interval $[a,b]$, for any $n\in\mathbb{N^*}$, we have
\begin{equation*} 
\begin{split}
&\int_{a}^{b}{f(x_n)\int_{a}^{x_n}{f(x_{n-1}) \cdots \int_{a}^{x_2}{f(x_1)\,dx_1 \cdots dx_n}}} \\
&\,\,\,\,=\sum_{k=1}^{n}{(\rho_{k}(b)-\rho_{k}(a))\sum_{\sum{iy_i}=n-k}}{\,\,\prod_{i=1}^{n-k}{(-1)^{y_i}(\rho_{i}(a))^{y_i}}}
.\end{split}
\end{equation*} 
\end{theorem}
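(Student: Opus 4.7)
The plan is to argue by induction on $n$, using the recursive form $J_{n+1,a,b} = \int_a^b f(t)\,J_{n,a}(t)\,dt$ of the definite recurrent integral (where $J_{n,a}(t)$ denotes the same integral with upper limit $t$, so that $J_{n,a,b} = J_{n,a}(b)$), together with Lemma \ref{l 2.3}. For brevity, abbreviate the partition weight as $\Pi_m(a) := \sum_{\sum iy_i = m}\prod_{i=1}^{m}(-1)^{y_i}(\rho_i(a))^{y_i}$, with the empty-partition convention $\Pi_0(a)=1$. The base case $n=1$ is immediate: $\int_a^b f(x)\,dx = \rho_1(b)-\rho_1(a)$, which matches the lone $k=1$ term $(\rho_1(b)-\rho_1(a))\Pi_0(a)$ on the right.

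For the inductive step, assume the claim for $n$. Substituting the induction hypothesis into $J_{n+1,a,b} = \int_a^b f(t)\,J_{n,a}(t)\,dt$ and using linearity yields
\begin{equation*}
J_{n+1,a,b} = \sum_{k=1}^{n}\Pi_{n-k}(a)\int_a^b f(t)\rho_k(t)\,dt \;-\; \sum_{k=1}^{n}\Pi_{n-k}(a)\rho_k(a)\int_a^b f(t)\,dt.
\end{equation*}
Lemma \ref{l 2.3} converts each $\int_a^b f(t)\rho_k(t)\,dt$ into $\rho_{k+1}(b)-\rho_{k+1}(a)$, while $\int_a^b f(t)\,dt = \rho_1(b)-\rho_1(a)$. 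The substitution $j = k+1$ reindexes the first sum to produce exactly the $j=2,\dots,n+1$ contributions of the target formula with the correct coefficients $\Pi_{n+1-j}(a)$. The missing $j=1$ term must be supplied by the second sum, which requires the aggregated coefficient of $\rho_1(b)-\rho_1(a)$ to equal $\Pi_n(a)$.

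Consequently, the whole induction hinges on the single combinatorial identity
\begin{equation*}
\Pi_n(a) + \sum_{k=1}^{n}\Pi_{n-k}(a)\,\rho_k(a) = 0, \qquad n \geq 1,
\end{equation*}
and this is the main obstacle. I would prove it by a sign-tracking bijection that removes a distinguished part of size $k$ from each partition of $n$, recording the factor $-\rho_k(a)$ and leaving a partition of $n-k$; equivalently, by multiplying the generating series $\sum_{m\geq 0}\Pi_m(a)z^m$ by $1 + \sum_{k\geq 1}\rho_k(a)z^k$ and showing that the coefficient of $z^n$ vanishes for every $n \geq 1$. The delicate point is handling the multiplicities $y_i$ carefully so that a partition of $n-k$ arising from different choices of $k$ is counted with the right signed weight without overcounting or losing sign cancellations; once this partition recurrence is verified, the inductive step closes cleanly and the theorem follows.
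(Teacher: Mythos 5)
Your reduction of the inductive step to the single identity $\Pi_n(a)+\sum_{k=1}^{n}\Pi_{n-k}(a)\,\rho_k(a)=0$ is exactly right, and up to that point your argument is the same as the paper's: the paper closes this step by mapping each pair $\bigl(k,\ \text{partition } y \text{ of } n-k\bigr)$ to the partition $Y$ of $n$ obtained by inserting one extra part of size $k$. But the ``delicate point'' you flag is fatal, not merely delicate: that map is not injective. A partition $Y$ of $n$ with $d$ distinct part sizes has exactly $d$ preimages (one for each choice of which part size is the ``new'' one), so the convolution $-\sum_{k}\Pi_{n-k}(a)\rho_k(a)$ computes the sum over partitions of $n$ of $d(Y)$ times the signed weight, not $\Pi_n(a)$. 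Concretely, writing $\rho_i$ for $\rho_i(a)$, one has $\Pi_0=1$, $\Pi_1=-\rho_1$, $\Pi_2=\rho_1^2-\rho_2$, $\Pi_3=-\rho_1^3+\rho_1\rho_2-\rho_3$, and
\[
\Pi_3+\Pi_2\rho_1+\Pi_1\rho_2+\Pi_0\rho_3=-\rho_1\rho_2,
\]
which by Theorem \ref{t 2.6} equals $-F(a)^3/2\neq0$ whenever $F(a)\neq0$: the term $\rho_1\rho_2$, coming from the partition $3=1+2$ with two distinct part sizes, is counted twice by the convolution but once by $\Pi_3$. Your own generating-function test detects this: $\sum_{m\ge0}\Pi_m(a)z^m=\prod_{i\ge1}\bigl(1+\rho_i(a)z^i\bigr)^{-1}$, whose reciprocal is $\prod_{i\ge1}\bigl(1+\rho_i(a)z^i\bigr)$, not $1+\sum_{k\ge1}\rho_k(a)z^k$; the two first differ at $z^3$.

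Consequently the identity you defer is false and the induction cannot be closed as proposed --- and in fact the statement itself fails from $n=4$ on. With $A=F(a)$, $B=F(b)$, the right-hand side of the theorem at $n=4$ evaluates to $\tfrac{B^4}{24}-\tfrac{AB^3}{6}+\tfrac{A^2B^2}{4}-\tfrac{2A^3B}{3}+\tfrac{13A^4}{24}$, whereas Theorem \ref{t 2.8} (which is certainly correct) gives $\tfrac{(B-A)^4}{24}$; the coefficients of $A^3B$ and of $A^4$ disagree. The cases $n\le3$ work only because the needed identity happens to hold at levels $1$ and $2$, where every partition has a single distinct part size. So you correctly isolated the crux that the paper's own proof glosses over with a non-bijective correspondence; the honest conclusion of your approach is that the recurrence must carry the weight $d(Y)$ (equivalently, the partition sum in the statement needs multiplicity-type weights), i.e.\ the theorem should be corrected rather than proved.
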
 
\begin{remark} 
{\em This theorem can also be rewritten using the second partition notation: }
\begin{equation*} 
\begin{split}
\int_{a}^{b}{f(x_n)\int_{a}^{x_n}{f(x_{n-1}) \cdots \int_{a}^{x_2}{f(x_1)\,dx_1 \cdots dx_n}}}
=\sum_{k=1}^{n}{\sum_{\substack{\sum{\lambda_i}=n-k  \\  \lambda_i \leq \lambda_{i+1}}}{(\rho_{k}(b)-\rho_{k}(a))(-1)^{\ell}\prod_{i=1}^{\ell}{\rho_{\lambda_i}(a)}}}
.\end{split}
\end{equation*} 
{\em This sum is over all partitions $(\lambda_1, \ldots, \lambda_{\ell})$ of all integers $n-k$ with $1\leq k \leq n$. It is important to note that it is not equivalent to a sum over all partitions $(k,\lambda_1, \ldots, \lambda_{\ell})$ of the integer $n$ as $k$ does not necessarily satisfy $k\leq \lambda_1$. }
\end{remark}
\begin{proof}
1. Base case: verify true for $n=1$
\begin{equation*} 
\begin{split}
\sum_{k=1}^{1}{(\rho_{k}(b)-\rho_{k}(a))\sum_{\sum{iy_i}=1-k}}{\,\,\prod_{i=1}^{1-k}{(-1)^{y_i}(\rho_{i}(a))^{y_i}}}
=\rho_1(b)-\rho_1(a)
.\end{split}
\end{equation*} 
Likewise, from Lemma \ref{t 2.3}, 
$$
\int_{a}^{b}{f(x_1)\,dx_1 } =\int_{a}^{b}{f(x_1)\rho_0(x_1)\,dx_1 }=\rho_1(b)-\rho_1(a)
.$$
2. Induction hypothesis: assume the statement is true until $n$. 
\begin{equation*} 
\begin{split}
J_{n,a,b}
=\sum_{k=1}^{n}{(\rho_{k}(b)-\rho_{k}(a))\sum_{\sum{iy_i}=n-k}}{\,\,\prod_{i=1}^{n-k}{(-1)^{y_i}(\rho_{i}(a))^{y_i}}}
.\end{split}
\end{equation*} 
3. Induction step: we will show that this statement is true for $(n+1)$. \\
We have to show the following statement to be true: 
\begin{equation*} 
\begin{split}
J_{n+1,a,b}
=\sum_{k=1}^{n+1}{(\rho_{k}(b)-\rho_{k}(a))\sum_{\sum{iy_i}=n-k+1}}{\,\,\prod_{i=1}^{n-k+1}{(-1)^{y_i}(\rho_{i}(a))^{y_i}}}
.\end{split}
\end{equation*} 
$$ \\ $$ 
Using the induction hypothesis, 
\begin{equation*} 
\begin{split}
J_{n+1,a,b}
&=\int_{a}^{b}{f(x_{n+1})\left(\int_{a}^{x_{n+1}}{f(x_{n}) \cdots \int_{a}^{x_2}{f(x_1)\,dx_1 \cdots dx_{n}}}\right)dx_{n+1}} \\ 
&=\int_{a}^{b}{f(x_{n+1})\left(\sum_{k=1}^{n}{(\rho_{k}(x_{n+1})-\rho_{k}(a))\sum_{\sum{iy_i}=n-k}}{\,\,\prod_{i=1}^{n-k}{(-1)^{y_i}(\rho_{i}(a))^{y_i}}}\right)dx_{n+1}} \\
&=\sum_{k=1}^{n}{\left(\int_{a}^{b}{f(x_{n+1})\rho_{k}(x_{n+1})dx_{n+1}}\right)\sum_{\sum{iy_i}=n-k}}{\,\,\prod_{i=1}^{n-k}{(-1)^{y_i}(\rho_{i}(a))^{y_i}}}\\
&-\sum_{k=1}^{n}{\rho_{k}(a)\left(\int_{a}^{b}{f(x_{n+1})dx_{n+1}}\right)\sum_{\sum{iy_i}=n-k}}{\,\,\prod_{i=1}^{n-k}{(-1)^{y_i}(\rho_{i}(a))^{y_i}}}
.\end{split}
\end{equation*} 
From Lemma \ref{l 2.3}, 
$$
\int_{a}^{b}{f(x_{n+1})\rho_{k}(x_{n+1})\,dx_{n+1}}
=\rho_{k+1}(b)-\rho_{k+1}(a)
,$$
$$
\int_{a}^{b}{f(x_{n+1})\,dx_{n+1}}
=\int_{a}^{b}{f(x_{n+1})\rho_0(x_{n+1})\,dx_{n+1}}
=\rho_{1}(b)-\rho_{1}(a)
.$$
Hence, 
\begin{equation*} 
\begin{split}
J_{n+1,a,b}
&=\sum_{k=1}^{n}{\left(\rho_{k+1}(b)-\rho_{k+1}(a)\right)\sum_{\sum{iy_i}=n-k}}{\,\,\prod_{i=1}^{n-k}{(-1)^{y_i}(\rho_{i}(a))^{y_i}}}\\
&-\sum_{k=1}^{n}{\rho_{k}(a)\left(\rho_{1}(b)-\rho_{1}(a)\right)\sum_{\sum{iy_i}=n-k}}{\,\,\prod_{i=1}^{n-k}{(-1)^{y_i}(\rho_{i}(a))^{y_i}}}\\
&=\sum_{k=2}^{n+1}{\left(\rho_{k}(b)-\rho_{k}(a)\right)\sum_{\sum{iy_i}=n-k+1}}{\,\,\prod_{i=1}^{n-k+1}{(-1)^{y_i}(\rho_{i}(a))^{y_i}}}\\
&+\left(\rho_{1}(b)-\rho_{1}(a)\right)\sum_{k=1}^{n}{\sum_{\sum{iy_i}=n-k}}{(-1)\rho_{k}(a)\,\,\prod_{i=1}^{n-k}{(-1)^{y_i}(\rho_{i}(a))^{y_i}}} 
.\end{split}
\end{equation*} 
Let us define a partition $(Y_1, \ldots, Y_n)$ such that $Y_k=y_k+1$ and, for $i \neq k$, if $1\leq i \leq n-k$, $Y_i=y_i$, otherwise, $Y_i=0$. Noticing that $\sum{i\,Y_i}=\sum{i\,y_i}+k=n$, hence, $(Y_1, \ldots, Y_n)$ is a partition of $n$ for all $k$. Thus, we have 
\begin{equation*}
\begin{split}
&\left(\rho_{1}(b)-\rho_{1}(a)\right)\sum_{k=1}^{n}{\sum_{\sum{iy_i}=n-k}}{(-1)\rho_{k}(a)\,\,\prod_{i=1}^{n-k}{(-1)^{y_i}(\rho_{i}(a))^{y_i}}} \\
&=\left(\rho_{1}(b)-\rho_{1}(a)\right)\sum_{\sum{iY_i}=n}{\,\,\prod_{i=1}^{n}{(-1)^{Y_i}(\rho_{i}(a))^{Y_i}}} \\
&=\sum_{k=1}^{1}{\left(\rho_{k}(b)-\rho_{k}(a)\right)\sum_{\sum{iY_i}=n-k+1}}{\,\,\prod_{i=1}^{n-k+1}{(-1)^{Y_i}(\rho_{i}(a))^{Y_i}}}
.\end{split}
\end{equation*} 
Hence, substituting back, we get the $(n+1)$ case and the theorem is proven. 
\end{proof}
\subsubsection{Definite recurrent integrals}
In this section, we develop two reduction formulae for definite recurrent integrals: the first involves a sum over partitions while the second does not. Then, in the later section, by equating the expressions from these two formulae, we derive partition identities. 
\begin{theorem} \label{t 2.7}
Let $f(x)$ be a function defined in the interval $[a,b]$, for any $n\in\mathbb{N^*}$, we have
\begin{equation*} 
\begin{split}
&\int_{a}^{b}{f(x_n)\int_{a}^{x_n}{f(x_{n-1}) \cdots \int_{a}^{x_2}{f(x_1)\,dx_1 \cdots dx_n}}} \\
&\,\,\,\,=\sum_{k=1}^{n}{\frac{(F(b))^{k}-(F(a))^{k}}{k!}(F(a))^{n-k}\sum_{\sum{iy_i}=n-k}}{\,\,\prod_{i=1}^{n-k}{\frac{(-1)^{y_i}}{i!^{y_i}}}}
.\end{split} 
\end{equation*} 
\end{theorem}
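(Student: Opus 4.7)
The plan is to derive Theorem 2.7 as a direct reformulation of the generalized fundamental theorem of calculus for recurrent integrals (Theorem \ref{t 2.5}), specialized via the closed form of the recurrent primitive supplied by Theorem \ref{t 2.6}. Since Theorem \ref{t 2.6} already furnishes the identity $\rho_k(x) = \frac{(F(x))^k}{k!}$, the task reduces to substituting this expression into every occurrence of a recurrent primitive in Theorem \ref{t 2.5} and simplifying.

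First I would rewrite the outer factor $\rho_k(b) - \rho_k(a)$ appearing in Theorem \ref{t 2.5} as $\frac{(F(b))^k - (F(a))^k}{k!}$. Next I would expand each factor in the inner partition-indexed product by writing
$$(\rho_i(a))^{y_i} = \left(\frac{(F(a))^i}{i!}\right)^{y_i} = \frac{(F(a))^{i y_i}}{(i!)^{y_i}},$$
so that the product becomes
$$\prod_{i=1}^{n-k} (-1)^{y_i}(\rho_i(a))^{y_i} = (F(a))^{\sum_{i=1}^{n-k} i y_i} \prod_{i=1}^{n-k} \frac{(-1)^{y_i}}{(i!)^{y_i}}.$$

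The crucial observation is that the partition constraint $\sum_{i=1}^{n-k} i y_i = n - k$ forces the aggregated exponent of $F(a)$ to be constant across all partitions of $n-k$. This lets me pull $(F(a))^{n-k}$ outside the inner sum over partitions, leaving exactly the prefactor $\frac{(F(b))^k - (F(a))^k}{k!} (F(a))^{n-k}$ that appears in Theorem \ref{t 2.7}, with the partition sum $\sum_{\sum i y_i = n-k} \prod_{i=1}^{n-k} \frac{(-1)^{y_i}}{(i!)^{y_i}}$ remaining inside.

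There is essentially no real obstacle here: the statement is a corollary of Theorems \ref{t 2.5} and \ref{t 2.6}, and the argument amounts to rearranging powers. The only delicate point is recognizing that the partition constraint is precisely what allows $(F(a))^{n-k}$ to be factored out of the partition sum, which becomes immediate once the product is rewritten with a single aggregated power of $F(a)$. An alternative route, if a self-contained induction were preferred, would mirror the proof of Theorem \ref{t 2.5} while using Lemma \ref{l 2.3} in the form $\int_a^b f(x)(F(x))^k / k!\, dx = ((F(b))^{k+1} - (F(a))^{k+1})/(k+1)!$; but the substitution approach above is strictly shorter and requires no new calculations.
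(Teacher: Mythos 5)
Your proposal is correct and follows the paper's own proof essentially verbatim: the paper likewise substitutes the closed form $\rho_k(x)=(F(x))^k/k!$ from Theorem \ref{t 2.6} into Theorem \ref{t 2.5} and uses the partition constraint $\sum i\,y_i=n-k$ to factor $(F(a))^{n-k}$ out of the partition sum. No further comment is needed.
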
 
\begin{remark}
{\em This theorem can also be expressed using the second partition notation:  
}
\begin{equation*} 
\begin{split}
&\int_{a}^{b}{f(x_n)\int_{a}^{x_n}{f(x_{n-1}) \cdots \int_{a}^{x_2}{f(x_1)\,dx_1 \cdots dx_n}}} \\
&\,\,\,\,=\sum_{k=1}^{n}{\sum_{\substack{\sum{\lambda_i}=n-k  \\  \lambda_i \leq \lambda_{i+1}}}{\frac{(F(b))^{k}-(F(a))^{k}}{k!}(F(a))^{n-k}(-1)^{\ell}\,\,\prod_{i=1}^{\ell}{\frac{1}{\lambda_i!}}}}
.\end{split} 
\end{equation*} 
\end{remark}
\begin{proof}
To be concise, we will represent the recurrent integral by $J_{n,a,b}$. From Theorem \ref{t 2.5},
\begin{equation*} 
\begin{split}
J_{n,a,b}=\sum_{k=1}^{n}{(\rho_{k}(b)-\rho_{k}(a))\sum_{\sum{iy_i}=n-k}}{\,\,\prod_{i=1}^{n-k}{(-1)^{y_i}(\rho_{i}(a))^{y_i}}}
.\end{split}
\end{equation*} 
Applying Theorem \ref{t 2.6}, we get 
\begin{equation*} 
\begin{split}
J_{n,a,b}
&=\sum_{k=1}^{n}{\frac{(\rho_{1}(b))^{k}-(\rho_{1}(a))^{k}}{k!}\sum_{\sum{iy_i}=n-k}}{\,\,\prod_{i=1}^{n-k}{(-1)^{y_i}\frac{(\rho_{1}(a))^{iy_i}}{i!^{y_i}}}} \\
&=\sum_{k=1}^{n}{\frac{(\rho_{1}(b))^{k}-(\rho_{1}(a))^{k}}{k!}(\rho_{1}(a))^{n-k}\sum_{\sum{iy_i}=n-k}}{\,\,\prod_{i=1}^{n-k}{\frac{(-1)^{y_i}}{i!^{y_i}}}}
.\end{split} 
\end{equation*}
Noticing that $\rho_{1}(x)$ is simply the primitive $F(x)$ of $f(x)$, we get the theorem. 
\end{proof}
\begin{theorem} \label{t 2.8}
Let $f(x)$ be a function defined in the interval $[a,b]$, for any $n\in\mathbb{N^*}$, we have
\begin{equation*} 
\int_{a}^{b}{f(x_n)\int_{a}^{x_n}{f(x_{n-1}) \cdots \int_{a}^{x_2}{f(x_1)\,dx_1 \cdots dx_n}}}
=\frac{1}{n!}\left(\int_{a}^{b}{f(x)\,dx}\right)^n
=\frac{\left(F(b)-F(a)\right)^n}{n!}
.\end{equation*} 
\end{theorem}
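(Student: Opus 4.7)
The plan is to prove Theorem \ref{t 2.8} by the same change-of-variables trick that was used to establish Theorem \ref{t 2.4}, thereby reducing the nested recurrent integral to a plain repeated integral of the constant function $1$, which has already been evaluated in Proposition \ref{definite repeated integral of 1}. The key observation is that for each integration variable $x_i$, the substitution $u_i=F(x_i)$ gives $du_i=f(x_i)\,dx_i$, and the inner limits transform as $a\mapsto F(a)$, $x_{i+1}\mapsto F(x_{i+1})=u_{i+1}$, while the outermost limit $b$ becomes $F(b)$. Every occurrence of the factor $f(x_i)\,dx_i$ is absorbed into $du_i$, so the integrand collapses to $1$.

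Carrying this out step by step, first I would rewrite
\begin{equation*}
\int_{a}^{x_2}{f(x_1)\,dx_1}=\int_{F(a)}^{u_2}{du_1},
\end{equation*}
then proceed outward one nesting at a time, so that the full recurrent integral becomes
\begin{equation*}
\int_{F(a)}^{F(b)}{\int_{F(a)}^{u_n}{\cdots\int_{F(a)}^{u_2}{1\,du_1\cdots du_n}}}.
\end{equation*}
Applying Proposition \ref{definite repeated integral of 1} with endpoints $F(a)$ and $F(b)$ immediately yields $(F(b)-F(a))^n/n!$. The equivalent expression $\frac{1}{n!}\bigl(\int_a^b f(x)\,dx\bigr)^n$ then follows from the fundamental theorem of calculus, $\int_a^b f(x)\,dx=F(b)-F(a)$.

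The main obstacle is essentially cosmetic rather than mathematical: one must verify that the change of variables is valid when $F$ is not monotone on $[a,b]$ (i.e.\ when $f$ changes sign), so that the integrals over transformed limits are interpreted correctly. This is resolved by appealing to the substitution rule in its algebraic form, which only requires $F$ to be an antiderivative of $f$ (no monotonicity assumption), since each inner integral is evaluated by the fundamental theorem of calculus rather than as a Riemann sum over a range of $u$-values.

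As a sanity check, I would note that this result is also consistent with Theorem \ref{t 2.7}: equating the two expressions for $J_{n,a,b}$ forces the identity
\begin{equation*}
\sum_{k=1}^{n}{\frac{(F(b))^{k}-(F(a))^{k}}{k!}(F(a))^{n-k}\sum_{\sum{iy_i}=n-k}{\,\prod_{i=1}^{n-k}{\frac{(-1)^{y_i}}{i!^{y_i}}}}}=\frac{(F(b)-F(a))^{n}}{n!},
\end{equation*}
which is a purely combinatorial identity in the two quantities $F(a)$ and $F(b)$; this is precisely the source of the partition identities promised in the introduction, and provides a second, independent route to the theorem (by proving that partition identity directly and then reading the result off Theorem \ref{t 2.7}).
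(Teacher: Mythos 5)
Your proof is correct and follows essentially the same route as the paper's: the substitution $X_i=F(x_i)$, $dX_i=f(x_i)\,dx_i$ reduces the recurrent integral to the definite repeated integral of $1$ over $[F(a),F(b)]$, which Proposition \ref{definite repeated integral of 1} evaluates as $(F(b)-F(a))^n/n!$. Your added remarks on non-monotone $F$ and the consistency check against Theorem \ref{t 2.7} go beyond what the paper records but do not change the argument.
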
 
\begin{proof}
Let $X_i=\int{f(x_i)\,dx_i}=F(x_i)$ and $dX_i=f(x_i)\,dx_i$, we can rewrite the definite recurrent integral as follows (Notice that the limits of integration should also be modified accordingly): 
\begin{equation*}
\int_{a}^{b}{f(x_n)\int_{a}^{x_n}{f(x_{n-1}) \cdots \int_{a}^{x_2}{f(x_1)\,dx_1 \cdots dx_n}}}
=\int_{F(a)}^{F(b)}{\int_{F(a)}^{X_n}{ \cdots \int_{F(a)}^{X_2}{1 \,dX_1 \cdots dX_n}}}.
\end{equation*}
Using Proposition \ref{definite repeated integral of 1}, we get the theorem.  
\end{proof}
\subsubsection{Partition identities derived from recurrent integrals}
By equation the expressions of the definite recurrent integral from Theorem \ref{t 2.7} and Theorem \ref{t 2.8}, we get the following theorem. 
\begin{theorem} \label{t 2.10}
Let $F(x)$ be a function defined in the interval $[a,b]$, for any $n\in\mathbb{N^*}$, we have 
\begin{equation*} 
\sum_{k=1}^{n}{\frac{(F(b))^{k}-(F(a))^{k}}{k!}(F(a))^{n-k}\sum_{\sum{iy_i}=n-k}}{\,\,\prod_{i=1}^{n-k}{\frac{(-1)^{y_i}}{i!^{y_i}}}}
=\frac{\left(F(b)-F(a)\right)^n}{n!}.
\end{equation*} 
Equivalently, we have 
\begin{equation*} 
\sum_{k=1}^{n}{\sum_{\substack{\sum{\lambda_i}=n-k  \\  \lambda_i \leq \lambda_{i+1}}}{\frac{(F(b))^{k}-(F(a))^{k}}{k!}(F(a))^{n-k}(-1)^{\ell}\,\prod_{i=1}^{\ell}{\frac{1}{\lambda_i!}}}}
=\frac{\left(F(b)-F(a)\right)^n}{n!}.
\end{equation*} 
\end{theorem}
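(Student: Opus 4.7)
The plan is to derive Theorem \ref{t 2.10} as an immediate consequence of equating the two distinct closed-form expressions for the definite recurrent integral already obtained in Theorems \ref{t 2.7} and \ref{t 2.8}. Given any differentiable $F$ on $[a,b]$, setting $f(x)=F'(x)$ makes $F$ the primitive of $f$, so both theorems apply to the same quantity
$$J_{n,a,b}=\int_{a}^{b}{f(x_n)\int_{a}^{x_n}{f(x_{n-1}) \cdots \int_{a}^{x_2}{f(x_1)\,dx_1 \cdots dx_n}}}.$$
Theorem \ref{t 2.7} expands $J_{n,a,b}$ as the partition-indexed sum that will appear on the left-hand side, while Theorem \ref{t 2.8} collapses the same integral into the clean form $(F(b)-F(a))^n/n!$. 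Setting the two expressions equal yields the first identity of the theorem.

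The only remaining step is to verify that the two forms of the identity are equivalent as notational restatements of the same combinatorial sum. For this, I would invoke the bijection between the two partition notations described in the partition discussion before Theorem \ref{t 2.5}: each multiplicity vector $(y_1,\ldots,y_{n-k})$ satisfying $\sum i y_i = n-k$ corresponds to a unique weakly increasing tuple $(\lambda_1,\ldots,\lambda_\ell)$ with $\sum \lambda_j = n-k$, where the value $i$ appears exactly $y_i$ times among the $\lambda_j$'s and $\ell=\sum_i y_i$ is the total number of parts. Under this correspondence,
$$\prod_{i=1}^{n-k} \frac{(-1)^{y_i}}{(i!)^{y_i}} = (-1)^{\ell}\prod_{j=1}^{\ell} \frac{1}{\lambda_j!},$$
so the two sums agree term by term, giving the second form.

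The main obstacle here is essentially non-existent on the analytic side, since all the work has already been done in proving Theorems \ref{t 2.7} and \ref{t 2.8}; the proof is a one-line comparison. The only care required is the routine but easy bookkeeping translating between the multiplicity representation $(y_i)$ and the ordered-parts representation $(\lambda_j)$ of partitions. The interest of the theorem lies not in the difficulty of its derivation but in the fact that the partition-heavy expansion of a definite recurrent integral, when contrasted with its simple binomial-type closed form, packages a nontrivial combinatorial identity for free.
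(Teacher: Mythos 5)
Your proposal matches the paper's own derivation exactly: the paper obtains Theorem \ref{t 2.10} by equating the two expressions for the definite recurrent integral from Theorem \ref{t 2.7} and Theorem \ref{t 2.8}, and the equivalence of the two partition notations is the same routine translation already used in the remarks following those theorems. Your proof is correct and essentially identical to the one in the paper.
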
 
From this general partition identity, we derive some special partition identities. 
\begin{corollary}
For any $n\in\mathbb{N^*}$, we have that 
\begin{equation*} \label{partition1}
\sum_{k=1}^{n}{\frac{1}{k!}\sum_{\sum{iy_i}=n-k}}{\,\,\prod_{i=1}^{n-k}{\frac{(-1)^{y_i}}{i!^{y_i}}}}
=\frac{\left(-1\right)^{n+1}}{n!}.
\end{equation*} 
Equivalently, we have 
\begin{equation*} 
\sum_{\substack{{k+\lambda_1\cdots+\lambda_{\ell}=n} \\ \lambda_i \leq \lambda_{i+1}}}{\frac{(-1)^{\ell+1}}{k!\lambda_1!\cdots\lambda_{\ell}!}}
=\frac{\left(-1\right)^{n}}{n!}.
\end{equation*} 
\end{corollary}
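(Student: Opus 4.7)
The plan is to derive the Corollary as a specialization of Theorem \ref{t 2.10}. The key observation is that the identity in Theorem \ref{t 2.10} depends on the function $F$ only through the two scalars $F(a)$ and $F(b)$, and so is in effect a polynomial identity in those two quantities. One may therefore assign them any convenient numerical values; alternatively, one may simply pick any $F$ realizing the chosen values at the endpoints (e.g.\ $F(x)=1-x$ on $[0,1]$).

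The natural choice is $F(a) = 1$ and $F(b) = 0$: this makes $(F(a))^{n-k} = 1$ and $(F(b))^k - (F(a))^k = -1$ uniformly in $k$, so the outer structure of the left-hand side of Theorem \ref{t 2.10} collapses entirely, leaving only the $1/k!$ and the inner partition sum. With this substitution, Theorem \ref{t 2.10} becomes
\begin{equation*}
-\sum_{k=1}^{n} \frac{1}{k!} \sum_{\sum i y_i = n-k} \prod_{i=1}^{n-k} \frac{(-1)^{y_i}}{i!^{y_i}}
= \frac{(0-1)^n}{n!} = \frac{(-1)^n}{n!},
\end{equation*}
and multiplying through by $-1$ (using $-(-1)^n = (-1)^{n+1}$) gives the first form of the Corollary.

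For the equivalent form, I would invoke the standard dictionary between the two partition notations recalled just before Theorem \ref{t 2.5}: a partition of $n-k$ recorded by multiplicities $(y_1, y_2, \ldots)$ has length $\ell = \sum_i y_i$, and listing its parts in weakly increasing order as $(\lambda_1, \ldots, \lambda_\ell)$ yields $\prod_{i} \frac{(-1)^{y_i}}{i!^{y_i}} = (-1)^{\ell} \prod_{j=1}^{\ell} \frac{1}{\lambda_j!}$. Substituting this into the first identity and absorbing one additional factor of $-1$ to turn $(-1)^{\ell}$ into $(-1)^{\ell+1}$ (while simultaneously flipping the right-hand side from $(-1)^{n+1}/n!$ to $(-1)^{n}/n!$) produces the second form, where the joint sum over $k$ and partitions of $n-k$ is rewritten as a single sum over tuples $(k,\lambda_1,\ldots,\lambda_\ell)$ with $k+\lambda_1+\cdots+\lambda_\ell=n$.

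There is no real obstacle here: the whole argument is a specialization of an already-proved identity followed by a mechanical notational translation. The only item worth verifying is that the boundary case $k = n$ (where the partition of $0$ is the empty one, $\ell = 0$, and the empty product equals $1$) is handled consistently by both notations, which is immediate; a small sanity check at $n=1$ and $n=2$ confirms the sign conventions.
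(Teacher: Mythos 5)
Your proposal is correct and follows exactly the paper's own route: specializing Theorem \ref{t 2.10} to $F(a)=1$, $F(b)=0$, multiplying by $-1$, and then translating to the second partition notation by merging the sum over $k$ with the sum over partitions of $n-k$. The only difference is that you spell out the sign bookkeeping and the empty-partition boundary case, which the paper leaves implicit.
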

\begin{proof}
Letting $F(x)$ be such that $F(a)=1$ and $F(b)=0$, Theorem \ref{t 2.10} reduces to this corollary. The second equality is obtained by using the second partition notation and combining the two sums into one. 
\end{proof}
\begin{corollary} \label{partition2}
For any $n\in\mathbb{N^*}$ and any $a,b\in\mathbb{C}$, we have that 
\begin{equation*} 
\sum_{k=1}^{n}{\frac{b^k a^{n-k}}{k!}\sum_{\sum{iy_i}=n-k}}{\,\,\prod_{i=1}^{n-k}{\frac{(-1)^{y_i}}{i!^{y_i}}}}
=\frac{(b-a)^n-(-a)^{n}}{n!}.
\end{equation*} 
Equivalently, we have 
\begin{equation*} 
\sum_{\substack{{k+\lambda_1\cdots+\lambda_{\ell}=n} \\ \lambda_i \leq \lambda_{i+1}}}{\frac{b^{k}a^{n-k}}{k!}\frac{(-1)^{\ell}}{\lambda_1!\cdots\lambda_{\ell}!}}
=\sum_{\substack{{k+\lambda_1\cdots+\lambda_{\ell}=n} \\ \lambda_i \leq \lambda_{i+1}}}{\frac{b^{k}}{k!}\frac{(-1)^{\ell}a^{\lambda_1+\cdots+\lambda_{\ell}}}{\lambda_1!\cdots\lambda_{\ell}!}}
=\frac{(b-a)^n-(-a)^{n}}{n!}.
\end{equation*} 
\end{corollary}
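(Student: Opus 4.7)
The plan is to derive this corollary as a direct specialization of Theorem \ref{t 2.10} combined with the preceding corollary, with no new integration or induction needed.

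First I would apply Theorem \ref{t 2.10} to $F(x)=x$ on the real interval $[a,b]$ (so that $F(a)=a$ and $F(b)=b$), which is valid for real $a<b$; the resulting identity is polynomial in $a$ and $b$, so it extends to arbitrary $a,b\in\mathbb{C}$ by the identity principle for polynomials. Writing $S_m:=\sum_{\sum iy_i=m}\prod_{i=1}^{m}\frac{(-1)^{y_i}}{i!^{y_i}}$ for the inner partition sum, Theorem \ref{t 2.10} then reads $\sum_{k=1}^{n}\frac{b^{k}-a^{k}}{k!}a^{n-k}S_{n-k}=\frac{(b-a)^{n}}{n!}$.

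The key step is to split the left side along the difference $b^{k}-a^{k}$:
$$\sum_{k=1}^{n}\frac{b^{k}a^{n-k}}{k!}S_{n-k}\;-\;a^{n}\sum_{k=1}^{n}\frac{S_{n-k}}{k!}=\frac{(b-a)^{n}}{n!}.$$
The second sum is exactly the quantity evaluated in the preceding corollary, namely $\sum_{k=1}^{n}\frac{S_{n-k}}{k!}=\frac{(-1)^{n+1}}{n!}$. Substituting gives $a^{n}\sum_{k=1}^{n}\frac{S_{n-k}}{k!}=\frac{(-1)^{n+1}a^{n}}{n!}=-\frac{(-a)^{n}}{n!}$, and rearranging yields the stated identity $\frac{(b-a)^{n}-(-a)^{n}}{n!}$.

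The equivalent formulation in the $(\lambda_1,\ldots,\lambda_{\ell})$ notation follows by the same dictionary used elsewhere in the paper: each tuple $(y_1,\ldots,y_{n-k})$ corresponds to a weakly increasing list $(\lambda_1,\ldots,\lambda_{\ell})$ with $\ell=\sum_i y_i$ and $\sum_i\lambda_i=n-k$, under which $\prod_i\frac{(-1)^{y_i}}{i!^{y_i}}=\frac{(-1)^{\ell}}{\lambda_1!\cdots\lambda_{\ell}!}$; absorbing the outer index $k$ into the partition sum then produces a single sum indexed by tuples $(k,\lambda_1,\ldots,\lambda_{\ell})$ with $k+\lambda_1+\cdots+\lambda_{\ell}=n$. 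No step presents a real obstacle; the crucial conceptual move is simply recognising that the "extra" term introduced by splitting $b^{k}-a^{k}$ is precisely the sum evaluated by the preceding corollary, and the only mild care point is the extension from real $[a,b]$ to arbitrary complex $a,b$, which is immediate from polynomiality.
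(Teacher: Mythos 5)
Your proposal is correct and follows essentially the same route as the paper: specialize Theorem \ref{t 2.10} to $F(a)=a$, $F(b)=b$, split $b^k-a^k$, and identify the residual sum with the one evaluated in the preceding corollary, which contributes $-\frac{(-a)^n}{n!}$. Your added remarks on extending from real $[a,b]$ to complex $a,b$ by polynomiality and on the explicit dictionary between the two partition notations are sound refinements of details the paper leaves implicit.
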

\begin{proof}
Letting $F(x)$ be such that $F(a)=a$ and $F(b)=b$, from Theorem \ref{t 2.10}, we have 
\begin{equation*} 
\sum_{k=1}^{n}{\frac{b^{k}a^{n-k}}{k!}\sum_{\sum{iy_i}=n-k}}{\,\,\prod_{i=1}^{n-k}{\frac{(-1)^{y_i}}{i!^{y_i}}}}
-a^n\sum_{k=1}^{n}{\frac{1}{k!}\sum_{\sum{iy_i}=n-k}}{\,\,\prod_{i=1}^{n-k}{\frac{(-1)^{y_i}}{i!^{y_i}}}}
=\frac{\left(b-a\right)^n}{n!}.
\end{equation*} 
Applying Corollary \ref{partition1}, we obtain this corollary. 
The second equality is obtained by using the second partition notation and combining the two sums into one. 
\end{proof}
\begin{corollary} \label{partition3}
For any $n\in\mathbb{N^*}$ and any $b\in\mathbb{C}$, we have that 
\begin{equation*} 
\sum_{k=1}^{n}{\frac{b^k}{k!}\sum_{\sum{iy_i}=n-k}}{\,\,\prod_{i=1}^{n-k}{\frac{(-1)^{y_i}}{i!^{y_i}}}}
=\frac{(b-1)^n-(-1)^{n}}{n!}.
\end{equation*} 
Equivalently, we have 
\begin{equation*} 
\sum_{\substack{{k+\lambda_1\cdots+\lambda_{\ell}=n} \\ \lambda_i \leq \lambda_{i+1}}}{\frac{b^k}{k!}\frac{(-1)^{\ell}}{\lambda_1!\cdots\lambda_{\ell}!}}
=\frac{(b-1)^n-(-1)^{n}}{n!}.
\end{equation*} 
\end{corollary}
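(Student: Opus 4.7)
The plan is to derive this corollary as an immediate specialization of Corollary \ref{partition2}, obtained by setting $a = 1$. Under that substitution, the factor $a^{n-k}$ on the left-hand side of Corollary \ref{partition2} becomes $1^{n-k} = 1$ for every index $k$, leaving exactly the left-hand side of the statement to be proved. On the right-hand side, the formula $\frac{(b-a)^n - (-a)^n}{n!}$ collapses to $\frac{(b-1)^n - (-1)^n}{n!}$. Thus the first displayed identity follows directly, with no calculation beyond a symbolic substitution.

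An alternative route is to apply the master identity Theorem \ref{t 2.10} directly, choosing a function with $F(a) = 1$ and $F(b) = b$ (any affine adjustment of the identity function works). The right-hand side of Theorem \ref{t 2.10} then equals $\frac{(b-1)^n}{n!}$, and the inner partition sums with $a$-factors $\rho_1(a)^{n-k} = 1$ reproduce the claimed left-hand side, modulo a correction coming from the term that does not involve $b$; this correction is identified via the earlier corollary (the one with $F(a) = 1$, $F(b) = 0$), whose right-hand side is $\frac{(-1)^{n+1}}{n!}$. Rearranging gives the stated identity. This route is essentially equivalent to the first but makes transparent that the present corollary is the two-variable identity with one variable normalized to $1$.

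For the second, partition-indexed form, I translate the inner multiplicity sum $\sum_{\sum i y_i = n-k} \prod_{i=1}^{n-k} \frac{(-1)^{y_i}}{i!^{y_i}}$ into a sum over partitions $(\lambda_1, \ldots, \lambda_\ell)$ of $n-k$ with $\lambda_j \leq \lambda_{j+1}$, using the correspondence set out in Section \ref{section 2}: the $y_i$ are the multiplicities of the parts, so $\ell = \sum_i y_i$, the sign becomes $(-1)^{\sum y_i} = (-1)^\ell$, and the product $\prod_i \frac{1}{i!^{y_i}}$ coincides with $\prod_{j=1}^{\ell} \frac{1}{\lambda_j!}$. Merging the outer sum over $k \in \{1, \ldots, n\}$ with the inner partition sum into a single sum over tuples $(k, \lambda_1, \ldots, \lambda_\ell)$ subject to $k + \lambda_1 + \cdots + \lambda_\ell = n$ and $\lambda_j \leq \lambda_{j+1}$ produces the second form in the statement. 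There is no genuine obstacle in the proof; the work is purely bookkeeping between the two equivalent partition notations, and the real content sits entirely in the already-established Corollary \ref{partition2}.
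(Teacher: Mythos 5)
Your proof is correct and takes essentially the same route as the paper, which likewise obtains this corollary simply by setting $a=1$ in Corollary \ref{partition2}; your additional remarks on the alternative derivation via Theorem \ref{t 2.10} and on translating between the two partition notations are sound but go beyond what the paper records.
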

\begin{proof}
By setting $a=1$ in Corollary \ref{partition2}, we obtain this corollary. 
\end{proof}
\subsubsection{Generalized recurrent integrals}
The recurrent integral structure is one where all functions are the same. It is analogous to the simple recurrent sum (presented in \cite{RecurrentSums}) where all sequences are the same. However, that last article also presents a more general sum structure where all sequences are distinct. Similarly, we can introduce a generalized recurrent integral structure where all functions are distinct. 
\begin{definition}
We define the general indefinite recurrent integral of order $n$ and denote it as: 
\begin{equation}
J_n[f_1,\ldots, f_n](x)=
\int{f_n(x)\int{f_{n-1}(x)\cdots\int{f_1(x)\,dx^n}}}
.\end{equation}
\end{definition}
\begin{definition}
We define the $n$-th order general definite recurrent integral and denote it as:  
\begin{equation}
J_{n,a,b}[f_1,\ldots,f_n]=
\int_{a}^{b}{f_n(x_n)\int_{a}^{x_n}{f_{n-1}(x_{n-1}) \cdots \int_{a}^{x_2}{f_1(x_1)\,dx_1 \cdots dx_n}}}
.\end{equation}
\end{definition}
In what follows, we conjecture formulas for simplifying each of the indefinite and definite general recurrent integrals. But first, we define the follow notation: 
The permutation group $S_n$ is the set of all permutations of the set $\{1, \ldots, n \}$. Let $\sigma \in S_n$ be a permutation of the set $\{1, \ldots, n \}$ and let $\sigma (i)$ represent the $i$-th element of this given permutation. 
Let $C_{n,k}$ be the set of all combinations (without repetition) of $k$ elements from the set $\{1, \ldots, n \}$. Let $\varepsilon \in C_{n,k}$ be a combination of $k$ elements from the set $\{1, \ldots, n \}$ and let $\varepsilon (i)$ represent the $i$-th element of this given combination.  
\begin{conjecture}
Let $f(x)$ be a function of $x$, for any $n\in\mathbb{N^*}$, we have that 
\begin{equation*}
\begin{split}
&\sum_{\sigma \in S_n}{\left(\int{f_{\sigma(n)}(x)\int{f_{\sigma(n-1)}(x)\cdots\int{f_{\sigma(1)}(x)\,dx^n}}}\right)} \\
&=\prod_{i=1}^{n}{\left(\int{f_i(x)\,dx}\right)}
+\sum_{k=0}^{n-1}{\sum_{\varepsilon \in C_{n,k}}{c_{k,\varepsilon(1),\ldots, \varepsilon(k)}\prod_{i=1}^{k}{\left(\int{f_{\varepsilon(i)}(x)dx}\right)}}}
\end{split}
\end{equation*}
where the $c_{k,\varepsilon(1),\ldots, \varepsilon(k)}$ are constants of integration. 
\end{conjecture}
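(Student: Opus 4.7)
The plan is an induction on $n$, exploiting the symmetry of the sum over $S_n$ together with the product rule. The base case $n=1$ is immediate: there is only one permutation and $\int f_1(x)\,dx = F_1(x) + c$ matches the right-hand side with $k=0$ and $\varepsilon$ the empty combination (the empty product equals $1$).

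For the inductive step, I would partition $S_{n+1}$ by the outermost index $\sigma(n+1) = j$; the symmetrized sum collapses to $\sum_{j=1}^{n+1} \int f_j(x)\,\Psi_j(x)\,dx$, where $\Psi_j(x)$ denotes the analogous symmetrized sum for the $n$ functions $\{f_i : i \neq j\}$. By the induction hypothesis, $\Psi_j = \prod_{i \neq j} F_i + P_j$, with $P_j$ a polynomial of degree $< n$ in the $F_i$'s ($i \neq j$) and constant coefficients. The leading term then arises from the product-rule identity
\begin{equation*}
\sum_{j=1}^{n+1} f_j(x) \prod_{i \neq j} F_i(x) = \frac{d}{dx} \prod_{i=1}^{n+1} F_i(x),
\end{equation*}
which integrates to $\prod_{i=1}^{n+1} F_i(x)$ modulo a constant.

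The remaining contribution $\sum_{j=1}^{n+1} \int f_j\,P_j\,dx$ must be shown to be a polynomial in the $F_i$'s of degree $< n+1$ with constant coefficients. The idea is to group terms by subset $\varepsilon' \subseteq \{1,\ldots,n+1\}$ and apply the generalized product rule $\frac{d}{dx}\prod_{i \in \varepsilon'} F_i = \sum_{m \in \varepsilon'} f_m \prod_{i \in \varepsilon' \setminus \{m\}} F_i$: provided the coefficient of $f_j \prod_{i \in \varepsilon' \setminus \{j\}} F_i$ in $\sum_j f_j P_j$ is independent of the choice of $j \in \varepsilon'$, the piece indexed by $\varepsilon'$ is a total derivative of $\prod_{i\in\varepsilon'} F_i$ and integrates to a polynomial in the $F_i$'s.

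The main obstacle is establishing this $j$-symmetry of the coefficients, which is not automatic from the induction hypothesis alone and must be traced back to the combinatorial structure of $S_{n+1}$. A potentially cleaner route would be to first prove the definite analogue,
\begin{equation*}
\sum_{\sigma \in S_n} J_{n,a,b}[f_{\sigma(1)}, \ldots, f_{\sigma(n)}] = \prod_{i=1}^n \bigl(F_i(b) - F_i(a)\bigr),
\end{equation*}
via the classical simplex-to-cube Fubini argument: each definite nested integral equals an integral over the ordered simplex $\{a \le x_1 \le \cdots \le x_n \le b\}$, and summing over all $\sigma$ fills the entire cube $[a,b]^n$, which factorizes. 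One then expands the product and relates the definite case back to the indefinite one by carefully accounting for the arbitrary constant introduced at each nested integration step and verifying that each such constant contributes only polynomial-in-$F_i$'s corrections of total degree $< n+1$. The constants $c_{k,\varepsilon(1),\ldots,\varepsilon(k)}$ on the right-hand side would then absorb both the $-F_i(a)$ terms arising from this expansion and the arbitrary integration constants.
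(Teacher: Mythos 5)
First, note that the paper does not actually prove this statement: it is presented as a conjecture, supported only by a two-function example, so there is no proof of record to compare yours against, and a complete argument would be new content. Judged on its own terms, your proposal is a sensible plan but, as you yourself flag, incomplete, and the gap you name --- the $j$-symmetry of the coefficients of $f_j\prod_{i\in\varepsilon'\setminus\{j\}}F_i$ --- is genuine and is in fact the whole difficulty. Worse, if the nested integrals on the left are read with fully arbitrary constants at every interior step, the required symmetry actually fails: already for three functions the inner symmetrized sum over $\{f_2,f_3\}$ contributes a term $a\int f_1F_2\,dx$ while the inner sum over $\{f_1,f_3\}$ contributes $b\int f_2F_1\,dx$ with $a$ and $b$ independent arbitrary constants, and for $a\neq b$ the leftover $(b-a)\int f_2F_1\,dx$ is generically not a polynomial in the primitives (take $f_1=e^x$, $f_2=\sin x$: the term $e^x\sin x$ is not in the algebra generated by $e^x$ and $\cos x$). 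So the statement needs a more careful reading before it can be proved at all.

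The way to close the gap inside your inductive scheme is to strengthen the induction hypothesis: fix canonical primitives at every interior step (all interior constants zero) and prove that for every index subset $A$ the symmetrized primitive sum equals $\sum_{\varepsilon\subseteq A}C_{A\setminus\varepsilon}\prod_{i\in\varepsilon}F_i$, where the constant $C_B$ depends only on the subset $B$ (it is the constant produced by the symmetrized primitive sum over $B$ itself), with $C_\emptyset=1$. That intrinsic dependence on $B$ alone is exactly the $j$-symmetry you need: the coefficients of $\int f_jF_a\,dx$ and of $\int f_aF_j\,dx$ are both $C_{A\setminus\{j,a\}}$, so each subset $\varepsilon'$ assembles into a total derivative of $\prod_{i\in\varepsilon'}F_i$, and the arbitrary constants $c_{k,\varepsilon(1),\ldots,\varepsilon(k)}$ of the statement are reinstated only at the very end. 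Your alternative route through the definite identity $\sum_{\sigma}J_{n,a,b}[f_{\sigma(1)},\ldots,f_{\sigma(n)}]=\prod_{i}\bigl(F_i(b)-F_i(a)\bigr)$ via the simplex-to-cube argument is sound for the companion conjecture, but transferring it back to the indefinite statement runs into the same bookkeeping of interior constants, so it does not avoid the issue.
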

\begin{example}
For $f_1(x)=e^x$ and $f_2(x)=\sin x$, we have 
\begin{equation*}
\begin{split}
&\int{\sin x \int{e^x \,dx^2}}+\int{e^x \int{\sin x \,dx^2}} \\
&=\left(\int{e^x}\,dx\right)\left(\int{\sin x}\,dx\right)
+c_{1,1}\left(\int{\sin x}\,dx\right)+c_{1,2}\left(\int{e^x}\,dx\right)
+c_0 \\
&=-e^x \cos x - c_{1,1}\cos x + c_{1,2}e^x + c_0
.\end{split}
\end{equation*}
\end{example}
\begin{conjecture}
Let $f(x)$ be a function defined in the interval $[a,b]$, for any $n\in\mathbb{N^*}$, we have 
\begin{equation*}
\sum_{\sigma \in S_n}{\left(\int_{a}^{b}{f_{\sigma(n)}(x_n)\int_{a}^{x_n}{f_{\sigma(n-1)}(x_{n-1}) \cdots \int_{a}^{x_2}{f_{\sigma(1)}(x_1)\,dx_1 \cdots dx_n}}}\right)}
=\prod_{i=1}^{n}{\left(\int_{a}^{b}{f_i(x)\,dx}\right)}
.\end{equation*}
\end{conjecture}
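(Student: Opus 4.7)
The plan is to reinterpret each permuted recurrent integral as an integral over an ordered simplex, and then exploit the elementary fact that the $n!$ such simplices tile the cube $[a,b]^n$. First, I would observe that, by expanding the nested limits of integration and applying Fubini level by level, one has the identification
$$J_{n,a,b}[f_{\sigma(1)}, \ldots, f_{\sigma(n)}] = \int_{\Delta} f_{\sigma(1)}(x_1)\, f_{\sigma(2)}(x_2) \cdots f_{\sigma(n)}(x_n) \, dx_1 \cdots dx_n,$$
where $\Delta = \{(x_1, \ldots, x_n) \in [a,b]^n : x_1 \le x_2 \le \cdots \le x_n\}$ is the ordered simplex. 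This is the analogue, for a general integrand, of what is already implicit in the author's treatment of $J_{n,a,b}$, and it is the conceptual bridge between iterated and multiple integration.

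Next I would decompose the cube. For each $\tau \in S_n$ define $\Delta_\tau = \{(x_1, \ldots, x_n) \in [a,b]^n : x_{\tau(1)} \le x_{\tau(2)} \le \cdots \le x_{\tau(n)}\}$. These $n!$ regions are pairwise disjoint up to measure zero and cover $[a,b]^n$, so by Fubini's theorem,
$$\prod_{i=1}^{n} \int_a^b f_i(x)\,dx \;=\; \int_{[a,b]^n} \prod_{i=1}^n f_i(x_i) \, dx_1 \cdots dx_n \;=\; \sum_{\tau \in S_n} \int_{\Delta_\tau} \prod_{i=1}^n f_i(x_i) \, dx_1 \cdots dx_n.$$

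The final step is a change of variables on each piece: in the integral over $\Delta_\tau$, substitute $y_j = x_{\tau(j)}$, whose Jacobian has absolute value one. The region $\Delta_\tau$ is mapped bijectively to $\Delta$, and since $x_i = y_{\tau^{-1}(i)}$, setting $i = \tau(j)$ gives $\prod_{i=1}^n f_i(x_i) = \prod_{j=1}^n f_{\tau(j)}(y_j)$. Therefore the integral over $\Delta_\tau$ equals $J_{n,a,b}[f_{\tau(1)}, \ldots, f_{\tau(n)}]$, and summing over $\tau \in S_n$ delivers the conjectured identity. A mild regularity hypothesis (e.g. each $f_i$ integrable on $[a,b]$, or continuous) suffices for all appeals to Fubini and the change of variables.

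The main obstacle here is not analytic but bookkeeping: one must be careful with the direction of the permutation relabeling so that the simplex $\Delta_\tau$ on the cube side and the argument ordering in $J_{n,a,b}[\,\cdot\,]$ on the iterated-integral side correspond under the same $\tau$ rather than under $\tau^{-1}$. Summation over all of $S_n$ ultimately makes the distinction invisible in the final identity, but it is easy to misplace the inversion in an intermediate line and arrive at the correct formula for the wrong reason. Once the simplex reformulation and the tiling are in hand, every remaining step is standard.
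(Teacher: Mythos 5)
Your argument is correct, and it is worth emphasizing that the paper offers no proof of this statement at all: it is presented as a conjecture, supported only by the two-function example with $e^x$ and $\sin x$. Your simplex-decomposition argument settles it. The chain of identifications is sound: the nested integral $J_{n,a,b}[f_{\tau(1)},\ldots,f_{\tau(n)}]$ is exactly the integral of $\prod_{j} f_{\tau(j)}(x_j)$ over the ordered simplex $a\le x_1\le\cdots\le x_n\le b$; the cube $[a,b]^n$ is tiled by the $n!$ simplices $\Delta_\tau$ up to the measure-zero diagonal hyperplanes $\{x_i=x_j\}$; and the relabeling $y_j=x_{\tau(j)}$ carries $\Delta_\tau$ onto $\Delta$ with $\prod_i f_i(x_i)=\prod_j f_{\tau(j)}(y_j)$, so each piece of the cube integral is one term of the permuted sum. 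Your own caution about $\tau$ versus $\tau^{-1}$ is resolved correctly in the text: the substitution as you wrote it sends the integral over $\Delta_\tau$ to $J_{n,a,b}[f_{\tau(1)},\ldots,f_{\tau(n)}]$, and in any case the sum ranges over all of $S_n$. The only thing to make explicit is the hypothesis: ``defined on $[a,b]$'' should be strengthened to each $f_i$ being Lebesgue integrable (or simply continuous) on $[a,b]$, so that Tonelli/Fubini applies and the diagonal hyperplanes genuinely contribute nothing. With that stated, this is a complete proof, and it in fact upgrades the paper's conjecture to a theorem; the companion conjecture for the indefinite form would still need separate treatment, since the simplex decomposition has no direct analogue for antiderivatives with their integration constants.
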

\begin{example} 
For $f_1(x)=e^x$ and $f_2(x)=\sin x$, we have 
\begin{equation*}
\begin{split}
&\int_{a}^{b}{\sin x_2 \int_{a}^{x_2}{e^{x_1} \,dx_1dx_2}}+\int_{a}^{b}{e^x_2 \int_{a}^{x_2}{\sin x_1 \,dx_1dx_2}} \\
&=\left(\int_{a}^{b}{e^x}\,dx\right)\left(\int_{a}^{b}{\sin x}\,dx\right)
=-e^b \cos b -e^a \cos a +e^b \cos a  +e^a \cos b    
.\end{split}
\end{equation*}
\end{example}
\section{Explicit formula for the $n$-th integral of $x^m(\ln x)^{m'}$} \label{section 3}
An integral table can be found at the back of any calculus textbook. Such integral tables contain general solutions to some general integrals. However, one integral that cannot be found is the $n$-th order integral of $x^m(\ln x)^{m'}$,
$$\int{x^m(\ln ⁡x)^{m'}dx^n}.$$ 
This integral is of great importance due to its connection with multiple harmonic star sums (MHSSs). In this section, we develop an explicit formula for this general integral. Through the formulas we derive in this section, we further contribute to strengthening the connection between logarithmic integrals and multiple harmonic star sums. We derive two expressions for this integral: the first in terms of shifted MHSSs and the second in terms of a sum over partitions of shifted harmonic sums. Hence, further highlighting the number theoretical nature of this integral. But, before we begin, for the sake of conciseness, let us define the following notation: 
$$
\int{f(x)\,dx^n}
=\int{f(x)\,d^n x}
=\underbrace{\int\cdots\int}_{n} \,f(x)\,\underbrace{dx\cdots dx}_{n}.
$$
\subsection{Lemmas}
Before we proceed to prove the needed lemmas, we need to present the gamma function. The gamma function \cite{Gamma,eulerGamma}, denoted $\Gamma (z)$, is an extension of the factorial to complex numbers excluding the non-positive integers as it satisfies the following property: $\Gamma(z+1)=z\Gamma(z)$. It was developed by Euler and is defined by the following expression: 
$$\Gamma(z)=\int_{0}^{\infty}{x^{z-1}e^{-x}\,dx}.$$
If $z\in\mathbb{N}$, the gamma function reduces back to the following factorial: 
$
\Gamma(z)=(z-1)!.
$ \\
We will denote by $\mathbb{Z^-}$ the set of negative integers. Hence, $\Gamma(z+1)$ is defined for $z\in \mathbb{C}\setminus \mathbb{Z^-}$. 
\\

Now that we have introduced the gamma function, we proceed in proving the lemmas. The first lemma consists of proving an expression for a simpler version of the integral. 
\begin{lemma} \label{l 3.3}
For any $m\in \mathbb{C}\setminus \{-1\}$ and for any $m'\in\mathbb{N}$, we have that 
\begin{equation*}
\int{x^m (\ln x)^{m'}\,dx}
=m'!\frac{x^{m+1}}{m+1}\sum_{k=0}^{m'}{(-1)^{m'-k}\frac{(\ln x)^k}{k!}\left(\frac{1}{(m+1)^{m'-k}}\right)}+C_1
.\end{equation*}
\end{lemma}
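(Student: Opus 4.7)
The plan is to prove Lemma \ref{l 3.3} by induction on $m'$, using integration by parts as the only non-trivial tool. The hypothesis $m \in \mathbb{C}\setminus\{-1\}$ is exactly what is needed so that the antiderivative $\frac{x^{m+1}}{m+1}$ is defined, and $m' \in \mathbb{N}$ is what allows induction to terminate.

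First, I would verify the base case $m'=0$. The right-hand side collapses to the single term $k=0$, which gives $0!\cdot \frac{x^{m+1}}{m+1}\cdot(-1)^{0}\cdot\frac{(\ln x)^{0}}{0!}\cdot\frac{1}{(m+1)^{0}}+C_1=\frac{x^{m+1}}{m+1}+C_1$, agreeing with the standard antiderivative of $x^m$.

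For the inductive step, assuming the lemma holds for some $m'\in\mathbb{N}$, I would apply integration by parts to $\int x^m(\ln x)^{m'+1}\,dx$ with $u=(\ln x)^{m'+1}$ and $dv=x^m\,dx$, so that $du=(m'+1)(\ln x)^{m'}\frac{dx}{x}$ and $v=\frac{x^{m+1}}{m+1}$. This reduces the integral to
\begin{equation*}
\int x^m(\ln x)^{m'+1}\,dx=\frac{x^{m+1}(\ln x)^{m'+1}}{m+1}-\frac{m'+1}{m+1}\int x^m(\ln x)^{m'}\,dx,
\end{equation*}
after which the induction hypothesis replaces the remaining integral by the claimed formula at order $m'$.

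The only thing left is bookkeeping: inserting the hypothesis gives the sum $(m'+1)!\frac{x^{m+1}}{m+1}\sum_{k=0}^{m'}(-1)^{m'+1-k}\frac{(\ln x)^{k}}{k!}\frac{1}{(m+1)^{m'+1-k}}$, while the boundary term $\frac{x^{m+1}(\ln x)^{m'+1}}{m+1}$ is precisely the missing $k=m'+1$ term of the target sum (since $(m'+1)!/(m'+1)!=1$, $(-1)^{0}=1$, and $1/(m+1)^{0}=1$). Combining these two pieces yields the statement for $m'+1$. The main obstacle, if any, is simply this index alignment: verifying that the sign $(-1)^{m'-k}$ correctly becomes $(-1)^{m'+1-k}$ after being multiplied by the $-(m'+1)/(m+1)$ prefactor, and that the boundary term fits in as the new top index of the sum. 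Once that is confirmed, the induction closes and the lemma follows.
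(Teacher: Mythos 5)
Your proposal is correct and follows essentially the same route as the paper's own proof: induction on $m'$, with the inductive step carried out by integration by parts taking $u=(\ln x)^{m'+1}$ and $dv=x^m\,dx$, and the boundary term absorbed as the $k=m'+1$ term of the target sum. The index and sign bookkeeping you describe checks out exactly as in the paper.
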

\begin{proof}
1. Base case: verify true for $m'=0$. 
\begin{equation*}
\begin{split}
0!\frac{x^{m+1}}{m+1}\sum_{k=0}^{0}{(-1)^{-k}\frac{(\ln x)^k}{k!}\left(\frac{1}{(m+1)^{-k}}\right)}
&=\frac{x^{m+1}}{m+1}
=\int{x^m\,dx}
.\end{split}
\end{equation*}
2. Induction hypothesis: assume the statement is true until $m'$. 
\begin{equation*}
\int{x^m (\ln x)^{m'}\,dx}
=m'!\frac{x^{m+1}}{m+1}\sum_{k=0}^{m'}{(-1)^{m'-k}\frac{(\ln x)^k}{k!}\left(\frac{1}{(m+1)^{m'-k}}\right)}
.\end{equation*}
3. Induction step: we will show that this statement is true for $(m'+1)$. \\
We have to show the following statement to be true: 
\begin{equation*}
\int{x^m (\ln x)^{m'+1}\,dx}
=(m'+1)!\frac{x^{m+1}}{m+1}\sum_{k=0}^{m'+1}{(-1)^{m'-k+1}\frac{(\ln x)^k}{k!}\left(\frac{1}{(m+1)^{m'-k+1}}\right)}
.\end{equation*}
$$ \\ $$ 
We set 
$
\begin{cases}
u= (\ln x)^{m'+1}, &  du=(m'+1)\left(\frac{1}{x}\right)(\ln x)^{m'} \,dx\\
v= \frac{x^{m+1}}{m+1}, & dv=x^m \,dx.
\end{cases}
$ \\
Applying integration by parts, we get 
\begin{equation*}
\begin{split}
\int{x^m (\ln x)^{m'+1}\,dx}
&=\frac{x^{m+1}}{m+1}(\ln x)^{m'+1}-\frac{m'+1}{m+1}\int{x^{m}(\ln x)^{m'}\,dx}
.\end{split}
\end{equation*}
Applying the induction hypothesis, 
\begin{equation*}
\begin{split}
&\int{x^m (\ln x)^{m'+1}\,dx} \\
&\,\,\,\,=\frac{x^{m+1}}{m+1}(\ln x)^{m'+1}-\frac{m'+1}{m+1}m'!\frac{x^{m+1}}{m+1}\sum_{k=0}^{m'}{(-1)^{m'-k}\frac{(\ln x)^k}{k!}\left(\frac{1}{(m+1)^{m'-k}}\right)}\\
&\,\,\,\,=\frac{x^{m+1}}{m+1}(\ln x)^{m'+1}+(m'+1)!\frac{x^{m+1}}{m+1}\sum_{k=0}^{m'}{(-1)^{m'-k+1}\frac{(\ln x)^k}{k!}\left(\frac{1}{(m+1)^{m'-k+1}}\right)} \\
&\,\,\,\,=(m'+1)!\frac{x^{m+1}}{m+1}\sum_{k=0}^{m'+1}{(-1)^{m'-k+1}\frac{(\ln x)^k}{k!}\left(\frac{1}{(m+1)^{m'-k+1}}\right)}
.\end{split}
\end{equation*}
Hence, the lemma is proven by induction. 
\end{proof}
In \cite{RecurrentSums}, the author proved the following theorem (called the variation formula for recurrent sums), 
\begin{equation} \label{var}
\sum_{N_m=q}^{n+1}{\cdots \sum_{N_2=q}^{N_3}{\sum_{N_1=q}^{N_2}{a_{N_m}\cdots a_{N_2}a_{N_1}}}}
=\sum_{k=0}^{m}{\left(a_{n+1}\right)^{m-k}\left(\sum_{N_k=q}^{n}{\cdots \sum_{N_2=q}^{N_3}{\sum_{N_1=q}^{N_2}{a_{N_k}\cdots a_{N_2}a_{N_1}}}}\right)}.
\end{equation}
This relation needs to be adapted to a certain form in order for it to be used in the proof of Theorem \ref{t 3.1}. This needed adaptation is illustrated by the following lemma. This lemma consists of a multiple harmonic star sum identity expressing a MHSS in terms of lower order MHSSs. 
\begin{lemma} \label{l 3.4}
We have that 
\begin{equation*}
\begin{split}
&\sum_{N_{m'-i}=1+m}^{n+m+1}{\cdots \sum_{N_{2}=1+m}^{N_3}{\sum_{N_{1}=1+m}^{N_2}{\frac{1}{N_{m'-i}} \cdots \frac{1}{N_2}\frac{1}{N_1}}}} \\
&\,\,\,\,=\sum_{k=i}^{m'}{\frac{1}{(m+n+1)^{k-i}}\sum_{N_{m'-k}=1+m}^{n+m}{\cdots \sum_{N_{2}=1+m}^{N_3}{\sum_{N_{1}=1+m}^{N_2}{\frac{1}{N_{m'-k}} \cdots \frac{1}{N_2}\frac{1}{N_1}}}}}
.\end{split}
\end{equation*} 
\end{lemma}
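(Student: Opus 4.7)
The plan is to derive Lemma \ref{l 3.4} as a direct application of the variation formula (\ref{var}) for recurrent sums, followed by a routine reindexing of the outer summation. The variation formula takes a recurrent sum of $m$ nested sequences with outermost upper limit $n+1$ and rewrites it as a sum over $k=0,\ldots,m$ of an inner recurrent sum of length $k$ (with upper limit $n$), weighted by $(a_{n+1})^{m-k}$. Since the left-hand side of the lemma has exactly this shape after matching parameters, the entire argument reduces to identifying the right substitution and then cleaning up indices.

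First I would specialise (\ref{var}) by setting $a_N = 1/N$, lower limit $q = 1+m$, number of nested sums equal to $m'-i$, and outer upper limit $n+m+1$ (so the formula's $n$ becomes $n+m$). Direct substitution gives
\begin{equation*}
\sum_{N_{m'-i}=1+m}^{n+m+1}\!\cdots\!\sum_{N_1=1+m}^{N_2}\frac{1}{N_{m'-i}}\cdots\frac{1}{N_1}
=\sum_{k=0}^{m'-i}\frac{1}{(n+m+1)^{m'-i-k}}\,S_k,
\end{equation*}
where $S_k$ denotes the inner recurrent sum of length $k$, with outer upper limit $n+m$, lower limit $1+m$, and summand $1/(N_1\cdots N_k)$.

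Second, I would reindex the outer sum via $K = k+i$ (equivalently, view the lemma's summation index $k$ as the image of $m'-k$ under this substitution). The range $0 \le k \le m'-i$ becomes $i \le K \le m'$, the exponent $m'-i-k$ becomes $K-i$, and the inner sum $S_k$ of length $k$ becomes an inner sum of length $m'-K$. Relabelling $K$ back to $k$ yields exactly the right-hand side of the lemma, with the nested sum written in variables $N_1,\ldots,N_{m'-k}$.

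I do not expect any genuine obstacle: the only place one can trip is the bookkeeping of dummy labels, since the variation formula produces an inner sum indexed by $N_1,\ldots,N_k$ while the lemma states the inner sum in $N_1,\ldots,N_{m'-k}$; however the dummy variables inside the nested sums are bound, so only their count matters, and the reindexing step above handles this cleanly.
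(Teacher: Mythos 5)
Your proposal follows exactly the paper's route: specialise the variation formula (\ref{var}) with $a_N=1/N$, $q=1+m$, $m\mapsto m'-i$, $n\mapsto n+m$, and then reindex the resulting outer sum. The only flaw is in the reindexing step as you state it: with $K=k+i$ the exponent $m'-i-k$ becomes $m'-K$ (not $K-i$) and the inner recurrent sum keeps length $k=K-i$ (not $m'-K$), so the substitution you name does not produce the form you claim. The substitution that does is $K=m'-k$ (the one the paper uses), under which the range $0\le k\le m'-i$ becomes $i\le K\le m'$, the exponent becomes $K-i$, and the inner length becomes $m'-K$, matching the lemma; your version happens to yield an equal sum only because the terms are symmetric under $k\mapsto m'-i-k$, which is an extra step you did not invoke. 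This is a bookkeeping slip rather than a conceptual gap, and once the substitution is corrected the argument is complete and identical to the paper's.
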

\begin{remark}
{\em Using the notation, we can write the lemma as follows: }
\begin{equation*}
\zeta^{\star}_{1+m,n+m+1}(\{1\}_{m'-i})
=\sum_{k=i}^{m'}{\frac{\zeta^{\star}_{1+m,n+m}(\{1\}_{m'-k})}{(n+m+1)^{k-i}}}
.\end{equation*}
\end{remark}
\begin{proof}
From equation (\ref{var}) with $m$ substituted by $m'-i$, with $a_{N_k}=\frac{1}{N_k}$, with $q=1+m$, and with $n$ substituted by $n+m$, we have that 
\begin{equation*}
\begin{split}
&\sum_{N_{m'-i}=1+m}^{n+m+1}{\cdots \sum_{N_2=1+m}^{N_3}{\sum_{N_1=1+m}^{N_2}{\frac{1}{N_{m'-i}}\cdots \frac{1}{N_2}\frac{1}{N_1}}}} \\
&\,\,\,\,=\sum_{k=0}^{m'-i}{\frac{1}{\left(m+n+1\right)^{m'-k-i}}\left(\sum_{N_k=1+m}^{n+m}{\cdots \sum_{N_2=1+m}^{N_3}{\sum_{N_1=1+m}^{N_2}{\frac{1}{N_k}\cdots \frac{1}{N_2}\frac{1}{N_1}}}}\right)}.
\end{split}
\end{equation*}
For the sake of conciseness, let us denote the first term as $S$. Substituting $k$ by $m'-k$, we get 
\begin{equation*}
\begin{split}
S=\sum_{m'-k=m'-i}^{0}{\frac{1}{\left(m+n+1\right)^{k-i}}\left(\sum_{N_{m'-k}=1+m}^{n+m}{\cdots \sum_{N_2=1+m}^{N_3}{\sum_{N_1=1+m}^{N_2}{\frac{1}{N_{m'-k}}\cdots \frac{1}{N_2}\frac{1}{N_1}}}}\right)}.
\end{split}
\end{equation*}
Noticing that the interval $m'-i \leq m'-k \leq 0$ is equivalent to the interval $i \leq k \leq m'$, 
\begin{equation*}
\begin{split}
S=\sum_{k=i}^{m'}{\frac{1}{\left(m+n+1\right)^{k-i}}\left(\sum_{N_{m'-k}=1+m}^{n+m}{\cdots \sum_{N_2=1+m}^{N_3}{\sum_{N_1=1+m}^{N_2}{\frac{1}{N_{m'-k}}\cdots \frac{1}{N_2}\frac{1}{N_1}}}}\right)}.
\end{split}
\end{equation*} 
The relation is proven. 
\end{proof}
\subsection{Theorems}
Now that the needed lemmas have been proven, we can develop the following explicit expression for the $n$-th integral of $x^m (\ln x)^{m'}$ in terms of a shifted multiple harmonic star sum. 
\begin{theorem} \label{t 3.1}
For any $m\in \mathbb{C}\setminus\mathbb{Z^-}$, any $m' \in \mathbb{N}$, and any $n \in \mathbb{N^*}$, we have that 
\begin{dmath*}
\int {x^m (\ln x)^{m'} d^n x}
=(m')!\Gamma(m+1)\frac{x^{n+m}}{\Gamma(n+m+1)}
\left[\sum_{k=0}^{m'}{(-1)^{m'-k} \frac{(\ln x)^k}{k!} \zeta_{1+m,n+m}^{\star}(\{1\}_{m'-k})}\right]
+C_n
\end{dmath*}
where 
\begin{equation*}
\zeta^{\star}_{1+m,n+m}(\{1\}_{m'-k})
=\sum_{N_{m'-k}=1+m}^{n+m} {\cdots \sum_{N_2=1+m}^{N_3}{\sum_{N_1=1+m}^{N_2}{\frac{1}{N_{m'-k}} \cdots \frac{1}{N_2}\frac{1}{N_1}}}}. 
\end{equation*}
\end{theorem}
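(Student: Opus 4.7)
The plan is to proceed by induction on $n$, using Lemma \ref{l 3.3} to handle the antiderivative step and Lemma \ref{l 3.4} to recognize the resulting combination of lower-order MHSSs as the next-order MHSS. For the base case $n=1$, observe that $\zeta^\star_{1+m,1+m}(\{1\}_{m'-k})$ is a sum over chains in the single point set $\{1+m\}$, so it collapses to $1/(m+1)^{m'-k}$; combined with $\Gamma(m+1)/\Gamma(m+2)=1/(m+1)$, the claimed expression reduces exactly to Lemma \ref{l 3.3}.

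For the inductive step, assume the formula at order $n$ and apply the recursion $I_{n+1}=\int I_n\,dx$. Integrating the inductive expression term by term reduces the problem to computing $\int x^{n+m}(\ln x)^k\,dx$ for each $0\le k\le m'$. Since $n+m\neq -1$ (as $m\notin\mathbb{Z}^-$ and $n\ge 1$), Lemma \ref{l 3.3} applies, producing for each $k$ a finite sum indexed by $0\le j\le k$ of terms proportional to $(\ln x)^j/j!$ with weight $(-1)^{k-j}/(n+m+1)^{k-j}$ and an overall factor $x^{n+m+1}/(n+m+1)$.

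Substituting back leaves a double sum $\sum_{k=0}^{m'}\sum_{j=0}^{k}$; swapping the order of summation converts this to $\sum_{j=0}^{m'}\sum_{k=j}^{m'}$. The sign factors combine to give a single $(-1)^{m'-j}$ outside (using $(-1)^{m'-k}(-1)^{k-j}=(-1)^{m'-j}$), and the prefactor becomes $(m')!\,\Gamma(m+1)\,x^{n+m+1}/\Gamma(n+m+2)$ after absorbing $1/(n+m+1)$. What remains inside is exactly
$$\sum_{k=j}^{m'}\frac{\zeta^\star_{1+m,n+m}(\{1\}_{m'-k})}{(n+m+1)^{k-j}},$$
which is precisely the right-hand side of Lemma \ref{l 3.4} with $i=j$; invoking that lemma converts the inner sum into $\zeta^\star_{1+m,n+m+1}(\{1\}_{m'-j})$, which completes the induction.

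The only substantive difficulty is bookkeeping: one must keep the parity of the sign exponents and the shift of the MHSS index bounds straight through the interchange of summation, and then recognize the matched form required by Lemma \ref{l 3.4}. All real content is packaged into the two supporting lemmas; the proof itself is a careful rearrangement driven by the recursion $I_{n+1}=\int I_n\,dx$.
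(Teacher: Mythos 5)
Your proof is correct and follows essentially the same route as the paper's: induction on $n$, with Lemma \ref{l 3.3} supplying the single integral $\int x^{n+m}(\ln x)^k\,dx$, an interchange of the resulting double sum, and Lemma \ref{l 3.4} to reassemble the inner sum into $\zeta^{\star}_{1+m,n+m+1}(\{1\}_{m'-j})$. The sign bookkeeping and the collapse of the base-case MHSS to $1/(m+1)^{m'-k}$ are handled exactly as in the paper.
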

\begin{remark}
{\em If $m  \in \mathbb{N}$, we replace the gamma function by the corresponding factorial. }
\begin{dmath*}
\int {x^m (\ln x)^{m'} d^n x}
=(m)!(m')!\frac{x^{n+m}}{(n+m)!}
\left[\sum_{k=0}^{m'}{(-1)^{m'-k} \frac{(\ln x)^k}{k!} \zeta^{\star}_{1+m,n+m}(\{1\}_{m'-k})}\right]
+C_n
.\end{dmath*}
\end{remark}
\begin{remark}
{\em If $m  \in \mathbb{Z^-}$ but $(n+m)<0$, by taking the limit of the gamma functions ratio, }
\begin{dmath*}
\int {x^m (\ln x)^{m'} d^n x}
=(m')!\frac{x^{n+m}}{(m+1)\cdots (n+m)}
\left[\sum_{k=0}^{m'}{(-1)^{m'-k} \frac{(\ln x)^k}{k!} \zeta^{\star}_{1+m,n+m}(\{1\}_{m'-k})}\right]
+C_n
.\end{dmath*}
\end{remark}
\begin{remark}
{\em As the reason for the constants of integration represented by $C_n$ is trivial, we will neglect this term in the proof of the theorem for simplicity. }
\end{remark}
\begin{proof}
1. Base case: verify true for $n=1$. 
\begin{equation*}
\begin{split}
&\Gamma(m+1)(m')!\frac{x^{1+m}}{\Gamma(2+m)}
\left[\sum_{k=0}^{m'}{(-1)^{m'-k} \frac{(\ln x)^k}{k!} \zeta_{1+m,1+m}^{\star}(\{1\}_{m'-k})}\right] \\
&=m'!\frac{x^{m+1}}{m+1}\sum_{k=0}^{m'}{(-1)^{m'-k}\frac{(\ln x)^k}{k!}\left(\frac{1}{(m+1)^{m'-k}}\right)}
.\end{split}
\end{equation*}
Likewise, from Lemma \ref{l 3.3}, 
\begin{equation*}
\int{x^m (\ln x)^{m'}\,dx}
=m'!\frac{x^{m+1}}{m+1}\sum_{k=0}^{m'}{(-1)^{m'-k}\frac{(\ln x)^k}{k!}\left(\frac{1}{(m+1)^{m'-k}}\right)}
.\end{equation*}
2. Induction hypothesis: assume the statement is true until $n$. 
\begin{dmath*}
\int {x^m (\ln x)^{m'} d^n x}
=\Gamma(m+1)(m')!\frac{x^{n+m}}{\Gamma(n+m+1)}
\left[\sum_{k=0}^{m'}{(-1)^{m'-k} \frac{(\ln x)^k}{k!} \zeta_{1+m,n+m}^{\star}(\{1\}_{m'-k})}\right]
.\end{dmath*}
3. Induction step: we will show that this statement is true for $(n+1)$. \\
We have to show the following statement to be true 
\begin{dmath*}
\int {x^m (\ln x)^{m'} d^{n+1} x}
=\Gamma(m+1)(m')!\frac{x^{n+m+1}}{\Gamma(n+m+2)}
\left[\sum_{k=0}^{m'}{(-1)^{m'-k} \frac{(\ln x)^k}{k!} \zeta_{1+m,n+m+1}^{\star}(\{1\}_{m'-k})}\right]
.\end{dmath*}
$$ \\ $$ 
Using the induction hypothesis, 
\begin{equation*}
\begin{split}
\int {x^m (\ln x)^{m'} d^{n+1} x}
&=\int {\left(\int{x^m (\ln x)^{m'} d^{n}x}\right) dx}\\
&=\int {\Gamma(m+1)\frac{(m')! x^{n+m}}{\Gamma(n+m+1)}
\left[\sum_{k=0}^{m'}{(-1)^{m'-k} \frac{(\ln x)^k}{k!} \zeta_{1+m,n+m}^{\star}(\{1\}_{m'-k})}\right] dx} \\
&=\frac{\Gamma(m+1)(m')!}{\Gamma(n+m+1)}
\sum_{k=0}^{m'}{ \frac{(-1)^{m'-k}}{k!} \zeta_{1+m,n+m}^{\star}(\{1\}_{m'-k})} \left(\int {x^{n+m}(\ln x)^k \,dx}\right) 
.\end{split}
\end{equation*}
Using Lemma \ref{l 3.3}, we have  
\begin{equation*}
\int{x^{n+m} (\ln x)^{k}\,dx}
=k!\frac{x^{m+n+1}}{m+n+1}\sum_{i=0}^{k}{(-1)^{k-i}\frac{(\ln x)^i}{i!}\left(\frac{1}{(m+n+1)^{k-i}}\right)}
.\end{equation*}
Substituting back and simplifying, we get 
\begin{equation*}
\begin{split}
\int{x^m(\ln x)^{m'}\,dx}
=\frac{m'!\Gamma(m+1)x^{m+n+1}}{\Gamma(m+n+2)}\sum_{k=0}^{m'}{\sum_{i=0}^{k}{(-1)^{m'-i}\frac{(\ln x)^i}{i!}\left\{\frac{\zeta_{1+m,n+m}^{\star}(\{1\}_{m'-k})}{(m+n+1)^{k-i}}\right\}}}
.\end{split}
\end{equation*}
Now we need to invert the order of summation. From Theorem 3.1 of \cite{RecurrentSums}, we have 
$$
\sum_{k=0}^{m'}{b_k\sum_{i=0}^{k}{a_i}}
=\sum_{i=0}^{m'}{a_i\sum_{k=i}^{m'}{b_k}}
.$$
Let 
$$
a_i=(-1)^{m'-i}\frac{(\ln x)^i}{i!}\frac{1}{(m+n+1)^{-i}}, 
\,\,\,\,
b_k=\frac{1}{(m+n+1)^k}\zeta_{1+m,n+m}^{\star}(\{1\}_{m'-k})
.$$
Hence, applying this formula to interchange the order of summation, we get 
\begin{equation*}
\begin{split}
&\sum_{k=0}^{m'}{\sum_{i=0}^{k}{(-1)^{m'-i}\frac{(\ln x)^i}{i!}\left\{\frac{\zeta_{1+m,n+m}^{\star}(\{1\}_{m'-k})}{(m+n+1)^{k-i}}\right\}}}\\
&=\sum_{i=0}^{m'}{(-1)^{m'-i}\frac{(\ln x)^i}{i!}\frac{1}{(m+n+1)^{-i}}\sum_{k=i}^{m'}{\frac{\zeta_{1+m,n+m}^{\star}(\{1\}_{m'-k})}{(m+n+1)^k}}} \\
&=\sum_{i=0}^{m'}{(-1)^{m'-i}\frac{(\ln x)^i}{i!}\sum_{k=i}^{m'}{\frac{\zeta_{1+m,n+m}^{\star}(\{1\}_{m'-k})}{(m+n+1)^{k-i}}}} 
.\end{split}
\end{equation*}
Applying Lemma \ref{l 3.4}, we get 
\begin{equation*}
\begin{split}
\sum_{k=0}^{m'}{\sum_{i=0}^{k}{(-1)^{m'-i}\frac{(\ln x)^i}{i!}\left\{\frac{\zeta_{1+m,n+m}^{\star}(\{1\}_{m'-k})}{(m+n+1)^{k-i}}\right\}}}
=\sum_{i=0}^{m'}{(-1)^{m'-i}\frac{(\ln x)^i}{i!}\left\{\zeta_{1+m,n+m+1}^{\star}(\{1\}_{m'-i})\right\}} 
.\end{split}
\end{equation*}
Substituting back into the expression of the integral, we get 
\begin{dmath*}
\int {x^m (\ln x)^{m'} d^{n+1} x}
=\Gamma(m+1)(m')!\frac{x^{n+m+1}}{\Gamma(n+m+2)}
\left[\sum_{i=0}^{m'}{(-1)^{m'-i} \frac{(\ln x)^i}{i!} \zeta_{1+m,n+m+1}^{\star}(\{1\}_{m'-i})}\right]
.\end{dmath*}
Hence, the case for $(n+1)$ is proven. Thus, the relation is proven by induction. 
\end{proof}
Before we proceed to prove a second expression for this logarithmic integral, we present a few corollaries that will be useful later. 
\begin{corollary} \label{l 3.1}
For any $m\in \mathbb{C} \setminus \{-1\}$, we have that 
\begin{equation*}
\int {x^m \ln x \,dx}
=\frac{x^{m+1}}{m+1} \left[ \ln x -\frac{1}{m+1} \right] +C_1.
\end{equation*}
\end{corollary}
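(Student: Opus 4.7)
The plan is to derive this corollary as a direct specialization of Lemma \ref{l 3.3} to the case $m'=1$. Lemma \ref{l 3.3} already gives the closed form of $\int x^m (\ln x)^{m'}\,dx$ for every nonnegative integer $m'$, so no new induction or integration-by-parts argument is needed; the only work is to unpack the finite sum when $m'=1$.

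Concretely, I would substitute $m'=1$ into
\begin{equation*}
\int{x^m (\ln x)^{m'}\,dx}
= m'!\,\frac{x^{m+1}}{m+1}\sum_{k=0}^{m'}{(-1)^{m'-k}\frac{(\ln x)^k}{k!}\cdot\frac{1}{(m+1)^{m'-k}}} + C_1,
\end{equation*}
so that the sum contains only the two terms $k=0$ and $k=1$. The $k=0$ term contributes $-\frac{1}{m+1}$ (from the factor $(-1)^{1}\cdot \frac{1}{(m+1)^1}$), and the $k=1$ term contributes $\ln x$ (from $(-1)^{0}\cdot \frac{\ln x}{1!}\cdot \frac{1}{(m+1)^0}$). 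Factoring out $\frac{x^{m+1}}{m+1}$ then yields exactly the claimed expression $\frac{x^{m+1}}{m+1}\bigl[\ln x - \frac{1}{m+1}\bigr] + C_1$.

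There is essentially no obstacle: the only implicit check is that Lemma \ref{l 3.3} applies, which requires $m\in\mathbb{C}\setminus\{-1\}$ (so that $m+1\neq 0$ and the division is legitimate) and $m'\in\mathbb{N}$. Both hypotheses are satisfied here, so the corollary follows by pure arithmetic from the lemma. (As an alternative sanity check one could redo the integration by parts directly with $u=\ln x$, $dv=x^m\,dx$, which recovers the same answer in one step; but going through Lemma \ref{l 3.3} keeps the exposition uniform with the rest of the section.)
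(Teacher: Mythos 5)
Your proposal is correct and matches the paper's intent: the corollary is stated without an explicit proof precisely because it is the $m'=1$ specialization of Lemma \ref{l 3.3} (equivalently the $n=1$, $m'=1$ case of Theorem \ref{t 3.1}), and your two-term expansion of the sum, with the $k=0$ term giving $-\frac{1}{m+1}$ and the $k=1$ term giving $\ln x$, is exactly the computation required. The hypothesis check ($m\neq -1$ so the lemma applies) is also the right one, so nothing is missing.
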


\begin{corollary} \label{l 3.2}
For any $m\in \mathbb{C}\setminus\mathbb{Z^-}$ and for any $n\in\mathbb{N^*}$, we have that 
\begin{equation*}
\int {x^m \ln x \,d^n x}
=\Gamma(m+1)\frac{x^{n+m}}{\Gamma(n+m+1)} \left[ \ln x - \sum_{N=1+m}^{n+m}{\frac{1}{N}} \right]
+C_{n}.
\end{equation*} 
\end{corollary}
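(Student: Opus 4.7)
The plan is to deduce this corollary as the direct specialization of Theorem \ref{t 3.1} to the case $m'=1$. Since Theorem \ref{t 3.1} already gives the $n$-th integral of $x^m(\ln x)^{m'}$ for arbitrary admissible $m$ and $m'\in\mathbb{N}$, the entire task reduces to unpacking what the formula says when we set $m'=1$ and simplifying the resulting short sum.

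Concretely, first I would substitute $m'=1$ into the identity from Theorem \ref{t 3.1}. This collapses the $k$-sum into just two terms, namely $k=0$ and $k=1$. For $k=1$ we get the factor $(-1)^{0}\tfrac{\ln x}{1!}\,\zeta^{\star}_{1+m,n+m}(\{1\}_0)$, and for $k=0$ we get the factor $(-1)^{1}\tfrac{1}{0!}\,\zeta^{\star}_{1+m,n+m}(\{1\}_1)$. Next I would handle the two MHSS values: by the convention that an empty product/empty nested sum equals $1$, we have $\zeta^{\star}_{1+m,n+m}(\{1\}_0)=1$, and directly from the definition $\zeta^{\star}_{1+m,n+m}(\{1\}_1)=\sum_{N=1+m}^{n+m}\tfrac{1}{N}$. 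Combining these two observations gives exactly the bracketed expression $\bigl[\ln x - \sum_{N=1+m}^{n+m}\tfrac{1}{N}\bigr]$.

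Finally, the prefactor is immediate: with $m'=1$ the factor $(m')!=1!=1$, so the multiplicative constant reduces to $\Gamma(m+1)\,x^{n+m}/\Gamma(n+m+1)$, matching the right-hand side of the corollary. The constants of integration $C_n$ are simply inherited from Theorem \ref{t 3.1}. Since each step is a direct substitution or a definitional unpacking, there is no genuine obstacle here; the only thing one needs to be mindful of is the sign bookkeeping, specifically that the $(-1)^{m'-k}$ factor with $m'=1$ flips the sign of the harmonic sum while leaving the $\ln x$ term with a plus sign, which is precisely what is required.
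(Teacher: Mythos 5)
Your proposal is correct and coincides with the paper's intended derivation: the paper states Corollary \ref{l 3.2} without a written proof precisely because it is the $m'=1$ specialization of Theorem \ref{t 3.1} (it is even listed as such among the particular cases), and your unpacking of the two-term sum, the empty MHSS convention $\zeta^{\star}_{1+m,n+m}(\{1\}_0)=1$, and the sign bookkeeping are all accurate.
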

\begin{remark}
{\em If $m  \in \mathbb{N}$, we replace the gamma function by the corresponding factorial. }
\begin{equation*}
\int {x^m \ln x \,d^n x}
=m!\frac{x^{n+m}}{(n+m)!} \left[ \ln x - \sum_{N=1+m}^{n+m}{\frac{1}{N}} \right]
+C_{n}.
\end{equation*} 
\end{remark}
\begin{remark}
{\em If $m  \in \mathbb{Z^-}$ but $(n+m)<0$, by taking the limit of the gamma functions ratio, }
\begin{equation*}
\int {x^m \ln x \,d^n x}
=\frac{x^{n+m}}{(n+m)\cdots(m+1)} \left[ \ln x - \sum_{N=1+m}^{n+m}{\frac{1}{N}} \right]
+C_{n}.
\end{equation*} 
\end{remark}

Using the reduction theorem for recurrent sums presented in \cite{RecurrentSums}, we can develop the following explicit expression for the $n$-th integral of $x^m (\ln x)^{m'}$ in terms of a sum over partitions. 
\begin{theorem} \label{t 3.2}
For any $m\in \mathbb{C}\setminus\mathbb{Z^-}$, any $m' \in \mathbb{N}$, and any $n \in \mathbb{N^*}$, we have that 
\begin{dmath*}
\int {x^m (\ln x)^{m'} d^n x}
=\frac{(m')!\Gamma(m+1) x^{n+m}}{\Gamma(n+m+1)}
\left[\sum_{k=0}^{m'}{(-1)^{m'-k} \frac{(\ln x)^k}{k!}\sum_{\substack{k \\ \sum{i \, y_{k,i}}={m'-k}}}{\prod_{i=1}^{m'-k}{\frac{1}{y_{k,i}! i^{y_{k,i}}}\left( \sum_{N=1+m}^{n+m}{\frac{1}{N^i}} \right)^{y_{k,i}}}}}\right]
+C_n
.\end{dmath*}
\end{theorem}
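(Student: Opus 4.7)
The plan is to derive Theorem \ref{t 3.2} as a direct corollary of Theorem \ref{t 3.1} by rewriting the shifted multiple harmonic star sum $\zeta_{1+m,n+m}^{\star}(\{1\}_{m'-k})$ in the bracket as a sum over partitions of $m'-k$. Since Theorem \ref{t 3.1} already gives
\begin{equation*}
\int x^m (\ln x)^{m'}\, d^n x
= (m')!\,\Gamma(m+1)\frac{x^{n+m}}{\Gamma(n+m+1)}\sum_{k=0}^{m'}(-1)^{m'-k}\frac{(\ln x)^k}{k!}\,\zeta_{1+m,n+m}^{\star}(\{1\}_{m'-k}) + C_n,
\end{equation*}
the only task remaining is to transform the MHSS into the partition form
\begin{equation*}
\zeta_{1+m,n+m}^{\star}(\{1\}_{m'-k})
= \sum_{\sum i\,y_{k,i}=m'-k}\;\prod_{i=1}^{m'-k}\frac{1}{y_{k,i}!\,i^{y_{k,i}}}\left(\sum_{N=1+m}^{n+m}\frac{1}{N^i}\right)^{y_{k,i}}.
\end{equation*}

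The identity above is precisely the reduction theorem for simple recurrent sums established in \cite{RecurrentSums}: a recurrent sum in which every sequence is the same sequence $a_N$, repeated $r$ times, decomposes as a weighted sum over partitions $(y_1,\ldots,y_r)$ of $r$, with each part of size $i$ contributing a factor $\tfrac{1}{y_i!\,i^{y_i}}(\sum_N a_N^i)^{y_i}$. Applying it with $a_N = 1/N$, summation range $1+m \le N \le n+m$, and $r = m'-k$ yields exactly the stated expression, since $\sum_{N=1+m}^{n+m} a_N^i = \sum_{N=1+m}^{n+m} 1/N^i$. This step is a direct invocation of a cited result, so no original calculation is required; one simply has to verify that the notational conventions match (the indexing of $y_{k,i}$ is taken relative to the specific $k$ in the outer sum, and the constraint $\sum i\,y_{k,i}=m'-k$ is indeed the definition of a partition of $m'-k$).

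The final step is bookkeeping: substitute the partition expansion of $\zeta_{1+m,n+m}^{\star}(\{1\}_{m'-k})$ back into the formula from Theorem \ref{t 3.1}, keeping the prefactor $\tfrac{(m')!\,\Gamma(m+1)\,x^{n+m}}{\Gamma(n+m+1)}$ and the signed logarithmic factors $(-1)^{m'-k}(\ln x)^k/k!$ intact. This produces precisely the right-hand side of Theorem \ref{t 3.2}, plus the constants of integration $C_n$ that were already present.

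The only potentially subtle point, and the step I would check most carefully, is the interpretation of the degenerate partition case $k=m'$, where $m'-k=0$ and the inner partition sum reduces to the empty product equal to $1$; this must agree with $\zeta_{1+m,n+m}^{\star}(\{1\}_{0})=1$ (the empty MHSS), which it does by convention. Apart from this edge case, the proof is purely a substitution, so no induction is needed and Theorem \ref{t 3.1} together with the reduction theorem of \cite{RecurrentSums} suffices.
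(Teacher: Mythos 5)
Your proposal is correct and matches the paper's own proof: both derive Theorem \ref{t 3.2} by substituting the reduction formula for recurrent sums from \cite{RecurrentSums} (with $a_N=1/N$, $q=1+m$, upper limit $n+m$) into the multiple harmonic star sums appearing in Theorem \ref{t 3.1}. Your explicit attention to the empty-partition case $k=m'$ and to the order $m'-k$ of each MHSS is, if anything, slightly more careful than the paper's one-line application.
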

\begin{remark}
{\em If $m  \in \mathbb{N}$, we replace the gamma function by the corresponding factorial. }
\begin{dmath*}
\int {x^m (\ln x)^{m'} d^n x}
=(m)!(m')!\frac{x^{n+m}}{(n+m)!}
\left[\sum_{k=0}^{m'}{(-1)^{m'-k} \frac{(\ln x)^k}{k!}\sum_{\substack{k \\ \sum{i \, y_{k,i}}={m'-k}}}{\prod_{i=1}^{m'-k}{\frac{1}{y_{k,i}! i^{y_{k,i}}}\left( \sum_{N=1+m}^{n+m}{\frac{1}{N^i}} \right)^{y_{k,i}}}}}\right]
+C_n
.\end{dmath*}
\end{remark}
\begin{remark}
{\em If $m  \in \mathbb{Z^-}$ but $(n+m)<0$, by taking the limit of the gamma functions ratio, }
\begin{dmath*}
\int {x^m (\ln x)^{m'} d^n x}
=\frac{(m')!x^{n+m}}{(n+m)\cdots(m+1)}
\left[\sum_{k=0}^{m'}{(-1)^{m'-k} \frac{(\ln x)^k}{k!}\sum_{\substack{k \\ \sum{i \, y_{k,i}}={m'-k}}}{\prod_{i=1}^{m'-k}{\frac{1}{y_{k,i}! i^{y_{k,i}}}\left( \sum_{N=1+m}^{n+m}{\frac{1}{N^i}} \right)^{y_{k,i}}}}}\right]
+C_n
.\end{dmath*}
\end{remark}
\begin{proof}
In \cite{RecurrentSums}, the author proved the following reduction formula, 
$$\sum_{N_m=q}^{n}{\cdots \sum_{N_1=q}^{N_2}{a_{N_m}\cdots a_{N_1}}}
=\sum_{\substack{k \\ \sum{i.y_{k,i}}=m}}{\prod_{i=1}^{m}{\frac{1}{(y_{k,i})!} \left( \frac{1}{i}\sum_{N=q}^{n}{(a_N)^i }\right)^{y_{k,i}}}}
.$$
Applying this formula (with $m$ substituted by $m'$, $n$ substituted by $n+m$, $q=1+m$, and $a_{N_k}=\frac{1}{N_k}$) to Theorem \ref{t 3.1}, we obtain this theorem. 
\end{proof}
Logarithms and logarithmic integrals have always held a special place in number theory. The greatest symbol of this connection is the prime number theorem. This connection is further highlighted here as a logarithmic integral was shown to be related to two major number theoretical concepts: MHSSs in Theorem \ref{t 3.1} and partitions in Theorem \ref{t 3.2}. \\ 

Although it is not of major importance, we can use the theorems previously developed for definite repeated integrals to develop some formulae for the $n$-th definite integral of $x^m(\ln x)^{m'}$. 
\begin{definition}
We define $L_{n,m,m'}(x)$ as the $n$-th primitive of $x^m (\ln x)^{m'}$ whose expression can be seen from Theorem \ref{t 3.1} or Theorem \ref{t 3.2}. 
\end{definition}
\begin{theorem} \label{t 3.3}
For any $m\in \mathbb{C}\setminus\mathbb{Z^-}$, any $m' \in \mathbb{N}$, and any $n \in \mathbb{N^*}$, we have that 
\begin{equation*}
\int_{a}^{b}{\cdots \int_{a}^{x_3}{\int_{a}^{x_2}{x_1^m (\ln (x_1))^{m'}dx_1dx_2 \cdots dx_n}}}
=L_{n,m,m'}(b)-\sum_{k=1}^{n}{(b-a)^{n-k}L_{k,m,m'}(a)}
.\end{equation*}
\end{theorem}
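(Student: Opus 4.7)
The plan is to obtain Theorem 3.3 as a direct specialization of Theorem 2.2 (the generalized fundamental theorem of calculus for repeated integrals) applied to $f(x) = x^m (\ln x)^{m'}$. Since this $f$ is continuous on $(0,\infty)$, Theorem 2.2 applies on any interval $[a,b] \subset (0,\infty)$, which is exactly the setting in which the integrand makes sense.

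Concretely, I would first recall that the definition immediately preceding Theorem 3.3 declares $L_{n,m,m'}(x)$ to be the $n$-th primitive of $x^m(\ln x)^{m'}$, i.e., the $n$-th iterated indefinite integral with all constants of integration set to zero. In the notation of Section 2.1.2 this is precisely $L_{n,m,m'}(x) = F_n\bigl[x^m (\ln x)^{m'}\bigr](x)$; both Theorem 3.1 and Theorem 3.2 exhibit this primitive explicitly (with the $C_n$ displayed separately), so the identification is unambiguous. I would then invoke Theorem 2.2 with this choice of $f$ and replace each $F_k$ appearing on the right-hand side by $L_{k,m,m'}$ for $k=1,\dots,n$. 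The claimed identity follows in one substitution, with each summand on the right-hand side of Theorem 2.2 being transcribed directly into the corresponding summand of Theorem 3.3 for this specific integrand.

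No substantive obstacle arises: Theorem 3.3 is essentially a corollary of Theorem 2.2, and its role is only to record the specialization of interest for Section 3 in terms of the notation $L_{k,m,m'}$ introduced there. The only minor point of care is to ensure that in the definition of each $L_{k,m,m'}$ the constants of integration are genuinely zero, so that $L_{k,m,m'}$ agrees with the $F_k$ that Theorem 2.2 refers to; this is guaranteed by the wording of the definition. In particular, no use of the explicit shifted-MHSS expansion of $L_{k,m,m'}$ from Theorem 3.1, nor of the partition expansion from Theorem 3.2, is required for the derivation itself — those expansions are available to the reader who wants a closed form for the right-hand side, but the identity of Theorem 3.3 is established purely at the level of the abstract primitive $L_{k,m,m'}$.
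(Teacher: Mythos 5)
Your approach is exactly the paper's: the paper's entire proof is the one-line remark that the theorem follows by applying the generalized fundamental theorem of calculus to $f(x)=x^m(\ln x)^{m'}$ (the paper cites Theorem \ref{t 2.1}, but this is evidently a slip for Theorem \ref{t 2.2}, which you correctly identify as the relevant result). However, your claim that each summand of Theorem \ref{t 2.2} ``transcribes directly'' into the corresponding summand of Theorem \ref{t 3.3} is not literally true and hides a real discrepancy: Theorem \ref{t 2.2} yields
\begin{equation*}
L_{n,m,m'}(b)-\sum_{k=1}^{n}{\frac{(b-a)^{n-k}}{(n-k)!}\,L_{k,m,m'}(a)},
\end{equation*}
whereas the statement you are asked to prove has $(b-a)^{n-k}$ with no $\frac{1}{(n-k)!}$. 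So either the displayed statement of Theorem \ref{t 3.3} is missing the factorial (which a check at $n=2$, where the $k=1$ term should carry a factor $\frac{(b-a)^{1}}{1!}$, does not detect, but $n=3$ would), or your substitution does not produce it. The mathematical content of your argument is sound and identical to the paper's intended proof --- including the correct identification $L_{k,m,m'}=F_k[x^m(\ln x)^{m'}]$ with all constants of integration set to zero --- but you should have flagged that the specialization of Theorem \ref{t 2.2} produces the factor $\frac{(b-a)^{n-k}}{(n-k)!}$, not $(b-a)^{n-k}$, rather than asserting that the two expressions coincide term by term.
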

\begin{proof}
By applying Theorem \ref{t 2.1} for $f(x)=x^m (\ln x)^{m'}$, we get this theorem. 
\end{proof}
\begin{theorem} \label{t 3.4}
For any $m\in \mathbb{C}\setminus\mathbb{Z^-}$, any $m' \in \mathbb{N}$, and any $n \in \mathbb{N^*}$, we have that 
\begin{equation*}
\int_{a}^{b}{\cdots \int_{a}^{x_3}{\int_{a}^{x_2}{x_1^m (\ln (x_1))^{m'}dx_1dx_2 \cdots dx_n}}}
=\frac{b^{n-1}}{(n-1)!}\sum_{k=0}^{n-1}{\binom{n-1}{k}\frac{(-1)^k}{b^k}\int_{a}^{b}{x^{m+k} (\ln x)^{m'}\,dx}}
.\end{equation*}
\end{theorem}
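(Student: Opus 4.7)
The plan is to apply Theorem \ref{definite repeated integral} (the reduction formula for definite repeated integrals) directly to the function $f(x) = x^m (\ln x)^{m'}$. That theorem was stated for an arbitrary function $f$ defined on $[a,b]$, and the hypothesis $m \in \mathbb{C}\setminus\mathbb{Z^-}$ together with $m' \in \mathbb{N}$ is exactly what is needed to ensure that $x^m (\ln x)^{m'}$ is integrable on $[a,b]$ (assuming $0 < a < b$, as is implicit for the logarithm to make sense).

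First I would recall the statement of Theorem \ref{definite repeated integral}: for any function $f$ on $[a,b]$ and any $n \in \mathbb{N^*}$,
\begin{equation*}
\int_{a}^{b}{\cdots \int_{a}^{x_2}{f(x_1)\,dx_1 \cdots dx_n}}
= \frac{b^{n-1}}{(n-1)!}\sum_{k=0}^{n-1}{\binom{n-1}{k}\frac{(-1)^k}{b^k}\int_{a}^{b}{x^k f(x)\,dx}}.
\end{equation*}
Then I would substitute $f(x) = x^m (\ln x)^{m'}$, observe that $x^k f(x) = x^{m+k}(\ln x)^{m'}$, and read off the right-hand side directly. No induction, no integration by parts, and no appeal to the explicit evaluation $\int x^{m+k}(\ln x)^{m'}\,dx$ from Lemma \ref{l 3.3} is needed, since the theorem only expresses the $n$-fold definite integral in terms of single integrals and does not require them to be computed in closed form.

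There is essentially no obstacle, since all the hard work was already done in proving Theorem \ref{definite repeated integral} by induction on $n$ with integration by parts. The only minor thing to note is that the theorem's validity requires $f$ to be (at least) Riemann integrable on $[a,b]$ together with every product $x^k f(x)$, $0 \le k \le n-1$. For $x^m(\ln x)^{m'}$ with $m \in \mathbb{C}\setminus\mathbb{Z^-}$ and $m' \in \mathbb{N}$, on an interval $[a,b] \subset (0,\infty)$ this is automatic, so the substitution is legitimate and the proof reduces to a single line.
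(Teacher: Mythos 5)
Your proposal is correct and matches the paper's own proof, which likewise obtains the result by a direct application of Theorem \ref{definite repeated integral} to $f(x)=x^m(\ln x)^{m'}$. Your added remark on integrability of $x^k f(x)$ on $[a,b]\subset(0,\infty)$ is a reasonable extra precaution but does not change the argument.
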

\begin{proof}
By applying Theorem \ref{definite repeated integral} for $f(x)=x^m (\ln x)^{m'}$, we get this theorem. 
\end{proof}
\begin{theorem} \label{t 3.5}
For any $m\in \mathbb{C}\setminus\mathbb{Z^-}$, any $m' \in \mathbb{N}$, and any $n \in \mathbb{N^*}$, we have that 
\begin{equation*}
\begin{split}
&\int_{a}^{b}{\cdots \int_{a}^{x_3}{\int_{a}^{x_2}{x_1^m (\ln (x_1))^{m'}dx_1dx_2 \cdots dx_n}}}\\
&\,\,=\frac{b^{n-1}}{(n-1)!}\sum_{k=0}^{n-1}{\binom{n-1}{k}\frac{(-1)^k}{b^k}\sum_{j=0}^{m'}{\frac{(-1)^{m'-j}}{(m+k+1)^{m'-j+1}}\frac{m'!}{j!}\left[b^{m+k+1}(\ln b)^j - a^{m+k+1}(\ln a)^j\right]}}
.\end{split}
\end{equation*}
\end{theorem}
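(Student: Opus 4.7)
The plan is to combine two results already in hand: the reduction formula for definite repeated integrals (Theorem \ref{definite repeated integral}, invoked through Theorem \ref{t 3.4}) and the closed form for the single indefinite integral of $x^{M}(\ln x)^{m'}$ (Lemma \ref{l 3.3}). Theorem \ref{t 3.4} has already reduced the $n$-fold definite integral to a weighted sum of single definite integrals of the form $\int_{a}^{b} x^{m+k}(\ln x)^{m'}\,dx$, so the only task left is to evaluate each such one-dimensional definite integral in closed form and substitute back.

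First I would apply Lemma \ref{l 3.3} with $m$ replaced by $m+k$ (which is legal since $m \in \mathbb{C}\setminus\mathbb{Z^-}$ and $k \geq 0$, so $m+k+1 \neq 0$) and with the summation index renamed from $k$ to $j$ to avoid collision with the outer index. This yields the primitive
\begin{equation*}
\int x^{m+k}(\ln x)^{m'}\,dx
= m'!\,\frac{x^{m+k+1}}{m+k+1}\sum_{j=0}^{m'}(-1)^{m'-j}\frac{(\ln x)^{j}}{j!}\,\frac{1}{(m+k+1)^{m'-j}} + C.
\end{equation*}
Next I would evaluate this primitive at the endpoints $b$ and $a$ via the fundamental theorem of calculus, pulling the prefactor $\frac{1}{m+k+1}$ inside the sum so that the denominator becomes $(m+k+1)^{m'-j+1}$, producing
\begin{equation*}
\int_{a}^{b} x^{m+k}(\ln x)^{m'}\,dx
= \sum_{j=0}^{m'} \frac{(-1)^{m'-j}\,m'!}{j!\,(m+k+1)^{m'-j+1}}\Bigl[b^{m+k+1}(\ln b)^{j} - a^{m+k+1}(\ln a)^{j}\Bigr].
\end{equation*}

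Finally I would substitute this closed form into the right-hand side of Theorem \ref{t 3.4}, interchanging nothing and merely concatenating the two summations. The outer factor $\frac{b^{n-1}}{(n-1)!}\sum_{k=0}^{n-1}\binom{n-1}{k}\frac{(-1)^{k}}{b^{k}}$ carries over verbatim, and the inner sum over $j$ is exactly the expression in the claim. This directly matches the right-hand side of the theorem, completing the proof.

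The argument is entirely mechanical; there is no real obstacle beyond notational care. The only place where one must be slightly attentive is keeping the two summation indices ($k$ from Theorem \ref{t 3.4} and $j$ from Lemma \ref{l 3.3}) distinct, and tracking the extra factor of $(m+k+1)$ that migrates from outside into the denominator when evaluating the primitive at the endpoints, raising the exponent from $m'-j$ to $m'-j+1$.
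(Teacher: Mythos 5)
Your proposal is correct and follows exactly the paper's own route: the paper proves Theorem \ref{t 3.5} by applying Lemma \ref{l 3.3} to Theorem \ref{t 3.4}, which is precisely the substitution you carry out (with the index bookkeeping done explicitly). Nothing further is needed.
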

\begin{proof}
By applying Lemma \ref{l 3.3} to Theorem \ref{t 3.4}, we get this theorem. 
\end{proof}
\subsection{Particular cases}
In what follows, we illustrate some special particular cases of Theorem \ref{t 3.1}. 
\begin{itemize}
\item For $n=1$: 
\begin{equation}
\int{x^m (\ln x)^{m'}\,dx}
=m'!\frac{x^{m+1}}{m+1}\sum_{k=0}^{m'}{(-1)^{m'-k}\frac{(\ln x)^k}{k!}\left(\frac{1}{(m+1)^{m'-k}}\right)}+C_1
.\end{equation}
\item For $n=1$ and $m=0$:
\begin{equation}
\int{(\ln x)^{m'}\,dx}
=m'!x\sum_{k=0}^{m'}{(-1)^{m'-k}\frac{(\ln x)^k}{k!}}+C_1
.\end{equation}
\item For $m=0$:
\begin{equation}
\int {(\ln x)^{m'} d^n x}
=m'!\frac{x^{n}}{n!}
\sum_{k=0}^{m'}{(-1)^{m'-k} \frac{(\ln x)^k}{k!} \zeta_{n}^{\star}(\{1\}_{m'-k})}
+C_n,
\end{equation}
where 
$$
\zeta_{n}^{\star}(\{1\}_{m'-k})
=\sum_{N_{m'-k}=1}^{n} {\cdots \sum_{N_2=1}^{N_3}{\sum_{N_1=1}^{N_2}{\frac{1}{N_{m'-k}} \cdots \frac{1}{N_2}\frac{1}{N_1}}}}.
$$
\item For $m'=1$: 
\begin{equation}
\int{x^m (\ln x)\,d^n x}
=\Gamma(m+1)\frac{x^{n+m}}{\Gamma(n+m+1)}\left[\ln x -\sum_{N=1+m}^{n+m}{\frac{1}{N}}\right]+C_n
.\end{equation}
\item For $n=1$ and $m'=1$:
\begin{equation}
\int{x^m (\ln x)\,dx}
=\frac{x^{m+1}}{m+1}\left[\ln x -\frac{1}{m+1}\right]+C_1
.\end{equation}
\item For $m'=0$:
\begin{equation}
\int{x^m \,d^n x}
=\frac{x^{n+m}}{\Gamma(n+m+1)}+C_n
.\end{equation}
\end{itemize}
\section{Application to the expression of harmonic sums} \label{section 4}
In this section, we use the connection between logarithmic integrals and harmonic sums established in the previous section to prove some results related to the harmonic sum, repeated harmonic sum, and binomial-harmonic sum. 
\subsection{An alternating expression for the harmonic sum}
In this section, we derive a new expression for the harmonic sum as an alternating sum. We present a particularly original proof for this expression using the logarithmic integral from Corollary \ref{l 3.2}. 

We begin by proving the following lemma. 
\begin{lemma} \label{l 4.1}
For any $n\in\mathbb{N^*}$ and any $m\in\mathbb{N}$, we have that 
\begin{equation*}
\frac{d^n}{dx^n}(x^{n+m}\ln x)
=\frac{(n+m)!}{m!}x^m \ln x +n!x^m\sum_{k=1}^{n}{\frac{(-1)^{k+1}}{k}\binom{m+n}{m+k}}
.\end{equation*}
\end{lemma}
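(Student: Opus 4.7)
My plan is to apply the general Leibniz rule to the product $x^{n+m}\ln x$, then separate out the single term that retains a $\ln x$ factor from the remaining terms in which $\ln x$ has been differentiated away.

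Concretely, set $f(x)=x^{n+m}$ and $g(x)=\ln x$. Then $f^{(j)}(x)=\frac{(n+m)!}{(n+m-j)!}x^{n+m-j}$ for $0\le j\le n$, while $g^{(0)}(x)=\ln x$ and, for $r\ge 1$, one has $g^{(r)}(x)=(-1)^{r-1}(r-1)!\,x^{-r}$. By Leibniz,
\begin{equation*}
\frac{d^n}{dx^n}\bigl(x^{n+m}\ln x\bigr)
=\sum_{j=0}^{n}\binom{n}{j}f^{(j)}(x)\,g^{(n-j)}(x).
\end{equation*}
The single term with $j=n$ produces $\binom{n}{n}\cdot\frac{(n+m)!}{m!}x^m\cdot\ln x=\frac{(n+m)!}{m!}x^m\ln x$, which is exactly the first term of the claimed identity.

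For the remaining range $0\le j\le n-1$ (so that $n-j\ge 1$ and the derivative of $\ln x$ is a power function), the $x^{n+m-j}$ factor multiplied by $x^{-(n-j)}$ collapses to $x^m$, and the sum becomes
\begin{equation*}
\sum_{j=0}^{n-1}\binom{n}{j}\frac{(n+m)!}{(n+m-j)!}\,(-1)^{n-j-1}(n-j-1)!\,x^{m}.
\end{equation*}
Reindexing by $k=n-j$ (so $j=n-k$ and $k$ runs from $1$ to $n$) and using $\binom{n}{n-k}=\binom{n}{k}=\frac{n!}{k!(n-k)!}$, each summand reduces to
\begin{equation*}
\frac{n!\,(n+m)!}{k\,(n-k)!\,(m+k)!}\,(-1)^{k-1}x^{m}
=n!\,x^{m}\cdot\frac{(-1)^{k+1}}{k}\binom{m+n}{m+k},
\end{equation*}
which gives the second term on the right-hand side. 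The main (and only) obstacle is bookkeeping the factorials and binomial identities through the reindexing; the underlying argument is a direct Leibniz expansion. I would either present this directly, or alternatively verify the identity by induction on $n$ (differentiating both sides and matching), but the Leibniz approach is cleaner and avoids an induction on an already explicit formula.
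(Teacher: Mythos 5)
Your proof is correct and follows essentially the same route as the paper: both apply the general Leibniz rule to $x^{n+m}\ln x$, isolate the single term in which $\ln x$ survives undifferentiated, and reduce the remaining terms to $x^m$ times the alternating binomial sum (you index by the derivative order on $x^{n+m}$ and reindex at the end, while the paper indexes by the derivative order on $\ln x$, but the computation is identical). The factorial bookkeeping in your reindexing checks out.
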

\begin{proof}
Let $u=x^{n+m}$, $v=\ln x$. 
From Leibniz's theorem, we get 
\begin{equation*}
\frac{d^n}{dx^n}\left(x^{n+m}\ln x\right)
=\left(u v\right)^{(n)}
=\sum_{k=0}^{n}{\binom{n}{k}u^{(n-k)}v^{(k)}}
=\sum_{k=0}^{n}{\binom{n}{k}[x^{n+m}]^{(n-k)}[\ln x]^{(k)}}
.\end{equation*}
For $k=0$, $\binom{n}{k}[x^{n+m}]^{(n-k)}[\ln x]^{(k)}=\frac{(n+m)!}{m!}x^m \ln x$. Hence, 
\begin{equation*}
\frac{d^n}{dx^n}\left(x^{n+m}\ln x\right)
=\frac{(n+m)!}{m!}x^m \ln x+\sum_{k=1}^{n}{\binom{n}{k}[x^{n+m}]^{(n-k)}[\ln x]^{(k)}}
.\end{equation*}
We also know the following: 
\begin{equation*}
\forall z \in \mathbb{N}, 
[x^{n+m}]^{(z)}=\frac{(m+n)!}{(m+n-z)!}x^{m+n-z},
\text{ hence, }
[x^{n+m}]^{(n-k)}=\frac{(m+n)!}{(m+k)!}x^{m+k}. 
\end{equation*}
\begin{equation*}
\forall k \in \mathbb{N^*}, v^{(k)}=\frac{(-1)^{k+1}(k-1)!}{x^k}. 
\end{equation*}
Hence, by substituting back, we get 
\begin{equation*}
\begin{split}
\frac{d^n}{dx^n}\left(x^{n+m}\ln x\right)
&=\frac{(n+m)!}{m!}x^m \ln x+\sum_{k=1}^{n}{\binom{n}{k}\left[\frac{(m+n)!}{(m+k)!}x^{m+k}\right]\left[\frac{(-1)^{k+1}(k-1)!}{x^k}\right]} \\
&=\frac{(n+m)!}{m!}x^m \ln x +x^m\sum_{k=1}^{n}{\frac{(-1)^{k+1}}{k}\frac{(m+n)!n!}{(m+k)!(n-k)!}} \\
&=\frac{(n+m)!}{m!}x^m \ln x +n!x^m\sum_{k=1}^{n}{\frac{(-1)^{k+1}}{k}\binom{m+n}{m+k}}
.\end{split}
\end{equation*}
The formula is proven. 
\end{proof}
Using the lemma, we develop an alternating sum expression for the harmonic sum.  
\begin{theorem} \label{t 4.1}
For any $n\in\mathbb{N^*}$ and any $m\in\mathbb{N}$, we have that 
\begin{equation*}
\binom{m+n}{m}\sum_{k=1+m}^{n+m}{\frac{1}{k}}
=\sum_{k=1}^{n}{\frac{(-1)^{k+1}}{k}\binom{m+n}{m+k}}
.\end{equation*}
\end{theorem}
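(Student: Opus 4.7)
The plan is to recognize that Corollary \ref{l 3.2} gives an explicit $n$-th primitive of $x^m \ln x$, while Lemma \ref{l 4.1} expresses the $n$-th derivative of $x^{n+m}\ln x$ in closed form. Since applying $d^n/dx^n$ to an $n$-th primitive of $f$ must return $f$, comparing these two formulas should collapse the problem to an algebraic identity between the harmonic tail $\sum_{N=1+m}^{n+m}\frac{1}{N}$ and the alternating binomial sum $\sum_{k=1}^{n}\frac{(-1)^{k+1}}{k}\binom{m+n}{m+k}$.

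Concretely, I would first write down the $n$-th primitive $F_n(x)$ of $x^m \ln x$ given by Corollary \ref{l 3.2} (dropping the constants of integration, since they vanish under $d^n/dx^n$):
\begin{equation*}
F_n(x)=\frac{m!}{(n+m)!}\Bigl[x^{n+m}\ln x - x^{n+m}\sum_{N=1+m}^{n+m}\frac{1}{N}\Bigr].
\end{equation*}
Then I would differentiate $n$ times term by term. The second summand is straightforward: $\frac{d^n}{dx^n}x^{n+m}=\frac{(n+m)!}{m!}x^m$. For the first summand I would invoke Lemma \ref{l 4.1} directly, giving $\frac{(n+m)!}{m!}x^m\ln x + n!\,x^m\sum_{k=1}^{n}\frac{(-1)^{k+1}}{k}\binom{m+n}{m+k}$.

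Assembling the pieces yields
\begin{equation*}
x^m \ln x = F_n^{(n)}(x) = x^m\ln x + \frac{m!\,n!}{(n+m)!}\,x^m\sum_{k=1}^{n}\frac{(-1)^{k+1}}{k}\binom{m+n}{m+k} - x^m\sum_{N=1+m}^{n+m}\frac{1}{N}.
\end{equation*}
Cancelling $x^m \ln x$ on both sides, dividing by $x^m$, and multiplying through by $\binom{m+n}{m}=\frac{(n+m)!}{m!\,n!}$ immediately produces the claimed identity.

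There is really no hard step: both inputs (Corollary \ref{l 3.2} and Lemma \ref{l 4.1}) are already in the paper. The only point requiring care is to verify that the constants of integration $C_n$ in Corollary \ref{l 3.2} genuinely drop out upon $n$-fold differentiation, which is immediate since $C_n(x)$ is a polynomial of degree at most $n-1$, and to keep the bookkeeping between factorials and binomial coefficients clean when rewriting $\frac{(n+m)!}{m!\,n!}$ as $\binom{m+n}{m}$.
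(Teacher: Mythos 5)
Your proposal is correct and follows essentially the same route as the paper: both arguments combine Corollary \ref{l 3.2} with Lemma \ref{l 4.1} and differentiate $n$ times so that the logarithmic terms cancel, leaving the claimed identity. The only difference is cosmetic — the paper first isolates $x^{n+m}\sum_{N=1+m}^{n+m}\frac{1}{N}$ and then differentiates, whereas you differentiate the primitive identity directly — and your bookkeeping of the constants and factorials checks out.
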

\begin{proof}
From Corollary \ref{l 3.2}, we have that 
\begin{equation*}
\int {x^m \ln x \,dx^n}
=m!\frac{x^{n+m}}{(n+m)!} \left[ \ln x - \sum_{N=1+m}^{n+m}{\frac{1}{N}} \right]
+C_{n}
.\end{equation*} 
We rearrange the terms as follows, 
\begin{equation*}
x^{n+m}\sum_{N=1+m}^{n+m}{\frac{1}{N}}
=x^{n+m}\ln x - \frac{(n+m)!}{m!}\int{x^m \ln x \,dx^n}+C_n
.\end{equation*}
By differentiating $n$ times, we get 
\begin{equation*}
\left(\sum_{N=1+m}^{n+m}{\frac{1}{N}}\right)\frac{d^n}{dx^n}(x^{n+m})
=\frac{d^n}{dx^n}(x^{n+m}\ln x) - \frac{(n+m)!}{m!}\frac{d^n}{dx^n}\left(\int{x^m \ln x \,dx^n}\right)+\frac{d^n}{dx^n}(C_n).
\end{equation*}
We know the following: 
\begin{equation*}
\frac{d^n}{dx^n}(C_n)=0, \,\,\,\,
\frac{d^n}{dx^n}(x^{n+m})=\frac{(n+m)!}{m!}x^m, \,\,\,\,
\frac{d^n}{dx^n}\left(\int{x^m \ln x \,dx^n}\right)=x^m \ln x, 
\end{equation*}
and, from Lemma \ref{l 4.1}, 
\begin{equation*}
\frac{d^n}{dx^n}(x^{n+m}\ln x)
=\frac{(n+m)!}{m!}x^m \ln x +n!x^m\sum_{k=1}^{n}{\frac{(-1)^{k+1}}{k}\binom{m+n}{m+k}}
.\end{equation*}
Hence, substituting back, we get 
\begin{equation*}
\begin{split}
\left(\sum_{N=1+m}^{n+m}{\frac{1}{N}}\right)\frac{(n+m)!}{m!}x^m
&=\frac{(n+m)!}{m!}x^m \ln x +n!x^m\sum_{k=1}^{n}{\frac{(-1)^{k+1}}{k}\binom{m+n}{m+k}} - \frac{(n+m)!}{m!}x^m \ln x \\
&=n!x^m\sum_{k=1}^{n}{\frac{(-1)^{k+1}}{k}\binom{m+n}{m+k}}
.\end{split}
\end{equation*}
Dividing both sides by $n!x^m$, we get the theorem.
\end{proof}
\begin{corollary} \label{c 4.1}
For $m=0$, Theorem \ref{t 4.1} becomes 
\begin{equation*}
\sum_{k=1}^{n}{\frac{1}{k}}
=\sum_{k=1}^{n}{\frac{(-1)^{k+1}}{k}\binom{n}{k}}
.\end{equation*}
\end{corollary}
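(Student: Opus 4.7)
The plan is to obtain this corollary as a direct specialization of Theorem \ref{t 4.1}, so the work is essentially a one-line substitution rather than a genuine proof. I would begin by writing down the statement of Theorem \ref{t 4.1},
\begin{equation*}
\binom{m+n}{m}\sum_{k=1+m}^{n+m}{\frac{1}{k}}
=\sum_{k=1}^{n}{\frac{(-1)^{k+1}}{k}\binom{m+n}{m+k}},
\end{equation*}
which is valid for any $m\in\mathbb{N}$, and then set $m=0$.

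On the left-hand side, setting $m=0$ collapses the binomial factor via $\binom{n}{0}=1$ and makes the summation range $1\le k\le n$, producing exactly $\sum_{k=1}^{n}\frac{1}{k}$. On the right-hand side, setting $m=0$ replaces $\binom{m+n}{m+k}$ with $\binom{n}{k}$, leaving $\sum_{k=1}^{n}\frac{(-1)^{k+1}}{k}\binom{n}{k}$. Equating the two sides yields the claimed identity.

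There is no genuine obstacle here: the only thing to check is that $m=0$ lies in the domain of validity of Theorem \ref{t 4.1}, which it does since $0\in\mathbb{N}$ as used throughout this section. Hence the corollary follows immediately, and the proof consists solely of substituting $m=0$ into the previously established theorem and simplifying the two binomial coefficients.
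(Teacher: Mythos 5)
Your proposal is correct and matches the paper's treatment exactly: the corollary is stated as an immediate specialization of Theorem \ref{t 4.1} at $m=0$, with the left-hand side reducing via $\binom{n}{0}=1$ and the index shift, and the right-hand side via $\binom{m+n}{m+k}\to\binom{n}{k}$. Nothing further is required.
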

\subsection{Applications of the alternating expression}
\subsubsection{Expression of the repeated harmonic sum in terms of an alternating sum}
In \cite{RepeatedSums}, the author showed how the repeated harmonic sum can be expressed in terms of a simple shifted harmonic sum. In this section, we express the repeated harmonic sum as an alternating sum. 
\begin{theorem} \label{t 4.2}
For any $n\in\mathbb{N^*}$ and any $m\in\mathbb{N}$, we have that 
\begin{equation*}
\sum_{k_{m+1}=1}^{n}{\cdots\sum_{k_2=1}^{k_3}{\sum_{k_1=1}^{k_2}{\frac{1}{k_1}}}}
=\sum_{k=1}^{n}{\frac{(-1)^{k+1}}{k}\binom{m+n}{m+k}}
.\end{equation*}
\end{theorem}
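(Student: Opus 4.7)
The identity to prove relates an $(m+1)$-fold iterated nested harmonic sum on the left to a finite alternating binomial sum on the right. My plan is to factor this through the shifted harmonic sum $\binom{m+n}{m}\sum_{k=1+m}^{n+m}\frac{1}{k}$, which serves as the natural bridge between the two sides.

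First I would recall the reduction formula for the repeated harmonic sum established in \cite{RepeatedSums}, namely
\begin{equation*}
\sum_{k_{m+1}=1}^{n}\cdots\sum_{k_2=1}^{k_3}\sum_{k_1=1}^{k_2}\frac{1}{k_1}
=\binom{m+n}{m}\sum_{k=1+m}^{n+m}\frac{1}{k}.
\end{equation*}
Once this intermediate expression is in hand, Theorem \ref{t 4.1} is directly applicable to its right-hand side and immediately yields $\sum_{k=1}^{n}\frac{(-1)^{k+1}}{k}\binom{m+n}{m+k}$, completing the proof in one step. So the whole argument is: invoke the cited reduction, then apply Theorem \ref{t 4.1}.

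The only conceptual obstacle is the cited reduction itself. If one had to reprove it here, the cleanest approach would be induction on $m$: the base case $m=0$ is trivially $H_n$, and for the inductive step one uses the hockey-stick identity $\sum_{j=a}^{n}\binom{m-1+j}{m-1}=\binom{m+n}{m}-\binom{m+a-1}{m}$ to repackage the extra outer summation $\sum_{k_{m+1}=1}^{n}$ against the already-simplified inner block, tracking the shifted bounds $1+m$ and $n+m$ carefully. This is essentially bookkeeping rather than a deep step.

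I expect the main (minor) pitfall is sign-consistency and index-shifting when combining the two ingredients, since Theorem \ref{t 4.1} is stated with summation index $N$ running from $1+m$ to $n+m$, while the repeated harmonic sum notation uses $k_1$ running from $1$ to $k_2$. Once the alignment is verified, no further computation is required.
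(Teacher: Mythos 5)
Your proposal matches the paper's proof exactly: both invoke the reduction formula $\sum_{k_{m+1}=1}^{n}\cdots\sum_{k_1=1}^{k_2}\frac{1}{k_1}=\binom{n+m}{m}\sum_{i=1+m}^{n+m}\frac{1}{i}$ from the cited work on repeated sums and then apply Theorem \ref{t 4.1} to conclude. The extra sketch of how one would reprove the cited reduction by induction on $m$ is a harmless bonus not present in the paper.
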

\begin{proof}
In \cite{RepeatedSums}, the following formula was proven, 
\begin{equation*}
\sum_{k_{m+1}=1}^{n}{\cdots \sum_{k_1=1}^{k_2}{\frac{1}{k_1}}}
=\binom{n+m}{m}\sum_{i=1+m}^{n+m}\frac{1}{i}
.\end{equation*}
By applying Theorem \ref{c 4.1}, we obtain the theorem. 
\end{proof}
\subsubsection{Expression of the repeated binomial-harmonic sum in terms of an alternating sum}
Similarly, in \cite{RepeatedSums}, the author also showed that the binomial-harmonic sum can be expressed as a shifted harmonic sum. In this section, we develop a new expression in terms of an alternating sum. 
\begin{theorem} \label{t 4.3}
For any $n,k\in\mathbb{N^*}$ and any $m\in\mathbb{N}$, we have that 
\begin{equation*}
\sum_{N_{k}=1}^{n}{\cdots\sum_{N_1=1}^{N_2}{\left[\binom{N_1+m}{m}\sum_{i=1+m}^{N_1+m}{\frac{1}{i}}\right]}}
=\sum_{i=1}^{n}{\frac{(-1)^{i+1}}{i}\binom{m+k+n}{m+k+i}}
.\end{equation*}
\end{theorem}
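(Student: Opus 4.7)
The plan is to reduce Theorem \ref{t 4.3} to Theorem \ref{t 4.2} by unfolding the binomial--harmonic expression inside the outer $k$-fold sum into a nested harmonic sum, thereby reassembling the whole left-hand side as a single deep repeated harmonic sum.

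First, recall from \cite{RepeatedSums} (the same identity invoked in the proof of Theorem \ref{t 4.2}) that for every positive integer $N$
\begin{equation*}
\binom{N+m}{m}\sum_{i=1+m}^{N+m}{\frac{1}{i}}
=\sum_{j_{m+1}=1}^{N}{\cdots \sum_{j_2=1}^{j_3}\sum_{j_1=1}^{j_2}{\frac{1}{j_1}}}.
\end{equation*}
Applying this identity with $N=N_1$ to the bracketed expression in the statement of Theorem \ref{t 4.3}, the left-hand side becomes
\begin{equation*}
\sum_{N_{k}=1}^{n}{\cdots\sum_{N_1=1}^{N_2}{\sum_{j_{m+1}=1}^{N_1}{\cdots \sum_{j_1=1}^{j_2}{\frac{1}{j_1}}}}}.
\end{equation*}

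Second, observe that this is now a nested repeated sum of total depth $k+(m+1)=(m+k)+1$, whose innermost summand is $1/j_1$. This is precisely the left-hand side of Theorem \ref{t 4.2} with $m$ replaced by $m+k$. Applying that theorem yields
\begin{equation*}
\sum_{i=1}^{n}{\frac{(-1)^{i+1}}{i}\binom{(m+k)+n}{(m+k)+i}},
\end{equation*}
which matches the right-hand side of Theorem \ref{t 4.3}.

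There is really no hard step: the argument is a two-line chaining of previously established identities. The only thing that requires a touch of care is the bookkeeping of nested indices, that is, verifying that the $k$ outer sums over $N_k,\ldots,N_1$ concatenated with the $m+1$ sums over $j_{m+1},\ldots,j_1$ form a valid repeated sum of depth $m+k+1$ to which Theorem \ref{t 4.2} directly applies. Once this is observed, the result follows immediately.
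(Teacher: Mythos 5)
Your proof is correct, but it takes a different route from the paper's. The paper quotes a second identity from \cite{RepeatedSums} that collapses the entire $k$-fold repeated binomial--harmonic sum at once, namely
\begin{equation*}
\sum_{N_k=1}^{n}{\cdots\sum_{N_1=1}^{N_2}{\left[\binom{N_1+m}{m}\sum_{i=1+m}^{N_1+m}{\frac{1}{i}}\right]}}
=\binom{n+m+k}{m+k}\sum_{i=1+m+k}^{n+m+k}{\frac{1}{i}},
\end{equation*}
and then applies Theorem \ref{t 4.1} with $m$ replaced by $m+k$. You instead unfold only the innermost bracket, using the single identity already needed for Theorem \ref{t 4.2} (the closed form of the depth-$(m+1)$ repeated harmonic sum), recognize the resulting concatenation $1\leq j_1\leq\cdots\leq j_{m+1}\leq N_1\leq\cdots\leq N_k\leq n$ as a repeated harmonic sum of depth $m+k+1$, and invoke Theorem \ref{t 4.2} with $m$ replaced by $m+k$. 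The two arguments are equally short, but yours is slightly more self-contained: it needs only one external identity from \cite{RepeatedSums} rather than two, and it in effect re-derives the paper's cited closed form for the repeated binomial--harmonic sum as a byproduct. Since Theorem \ref{t 4.2} was itself proved via Theorem \ref{t 4.1}, both arguments ultimately rest on the same alternating-sum identity; the difference is only in which intermediate result carries the weight. Your index bookkeeping checks out.
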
 
\begin{proof}
In \cite{RepeatedSums}, the following formula was proven, 
\begin{equation*}
\sum_{N_k=1}^{n}{\cdots\sum_{N_1=1}^{N_2}{\left[\binom{N_1+m}{m}\sum_{i=1+m}^{N_1+m}{\frac{1}{i}}\right]}}
=\binom{n+m+k}{m+k}\sum_{i=1+m+k}^{n+m+k}{\frac{1}{i}}
.\end{equation*}
Applying Theorem \ref{t 4.1}, we obtain the theorem. 
\end{proof}
\subsubsection{Application to the expression of the $n$-th integral of $x^m \ln x$}
In this section, we present an alternative expression for the $n$-th integral of $x^m \ln x$. 
\begin{theorem} \label{t 4.4}
For any $n\in\mathbb{N^*}$ and any $m\in\mathbb{N}$, we have that 
\begin{equation*}
\int{x^m \ln x\,dx^n}
=\frac{x^{n+m}}{n!\binom{n+m}{m}} \ln x
-\frac{x^{n+m}}{n!{\binom{n+m}{m}}^2}\sum_{k=1}^{n}{\frac{(-1)^{k+1}}{k}\binom{m+n}{m+k}}+C_n
.\end{equation*}
\end{theorem}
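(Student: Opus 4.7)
The plan is to derive Theorem 4.4 directly by combining two results already established in the excerpt: Corollary \ref{l 3.2} (the closed form of $\int x^m \ln x \, dx^n$ in terms of a shifted harmonic sum) and Theorem \ref{t 4.1} (the alternating-sum representation of that shifted harmonic sum). No new induction or integration-by-parts calculation is required; the work is purely algebraic repackaging.

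First, I would write down the identity from Corollary \ref{l 3.2}, specialized to $m\in\mathbb{N}$, namely
\[
\int x^m \ln x \,dx^n = m!\,\frac{x^{n+m}}{(n+m)!}\left[\ln x - \sum_{N=1+m}^{n+m}\frac{1}{N}\right] + C_n.
\]
Then I would rewrite the prefactor using the identity $\dfrac{m!}{(n+m)!} = \dfrac{1}{n!\binom{n+m}{m}}$, which follows immediately from the definition $\binom{n+m}{m} = \dfrac{(n+m)!}{m!\,n!}$.

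Next, I would apply Theorem \ref{t 4.1}, which says
\[
\binom{m+n}{m}\sum_{k=1+m}^{n+m}\frac{1}{k} = \sum_{k=1}^{n}\frac{(-1)^{k+1}}{k}\binom{m+n}{m+k},
\]
to substitute for the harmonic sum $\sum_{N=1+m}^{n+m}\frac{1}{N}$ inside the bracket. This introduces an extra factor of $1/\binom{n+m}{m}$, producing the second $\binom{n+m}{m}^2$ in the denominator of the final statement. Distributing the prefactor across the two terms in the bracket yields exactly the formula of Theorem \ref{t 4.4}.

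I do not expect any real obstacle: both ingredients are already proved, and the only checks are the elementary binomial identity $m!/(n+m)! = 1/(n!\binom{n+m}{m})$ and the careful tracking of the two appearances of $\binom{n+m}{m}$ (one from the original prefactor, one from inverting Theorem \ref{t 4.1}). The proof can be stated in two or three lines, essentially as a chain of substitutions, with the constant of integration $C_n$ carried through unchanged.
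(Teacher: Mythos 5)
Your proposal is correct and matches the paper's own proof exactly: the paper likewise quotes Corollary \ref{l 3.2} with the prefactor rewritten as $\frac{x^{n+m}}{n!\binom{n+m}{m}}$ and then substitutes the alternating sum from Theorem \ref{t 4.1}, picking up the extra factor $1/\binom{n+m}{m}$. Nothing is missing.
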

\begin{proof}
From Corollary \ref{l 3.2}, we have 
\begin{equation*}
\int {x^m \ln x \,d x^n}
=\frac{x^{n+m}}{n!\binom{n+m}{m}} \left[ \ln x - \sum_{k=1+m}^{n+m}{\frac{1}{k}} \right]
+C_{n}
.\end{equation*}  
Hence, applying Theorem \ref{t 4.1}, we obtain the theorem. 
\end{proof}
\begin{corollary} \label{c 4.3}
For $m=0$, Theorem \ref{t 4.4} becomes 
\begin{equation*}
\int{\ln x\,dx^n}
=\frac{x^{n}}{n!} \ln x
-\frac{x^{n}}{n!}\sum_{k=1}^{n}{\frac{(-1)^{k+1}}{k}\binom{n}{k}}+C_n
.\end{equation*}
\end{corollary}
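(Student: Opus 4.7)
The plan is to obtain this as a direct specialization of Theorem~\ref{t 4.4}, with no new work required beyond evaluating the binomial coefficients at $m=0$. First I would invoke Theorem~\ref{t 4.4}, which is already established and gives
\begin{equation*}
\int{x^m \ln x\,dx^n}
=\frac{x^{n+m}}{n!\binom{n+m}{m}} \ln x
-\frac{x^{n+m}}{n!{\binom{n+m}{m}}^2}\sum_{k=1}^{n}{\frac{(-1)^{k+1}}{k}\binom{m+n}{m+k}}+C_n.
\end{equation*}

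Next I would substitute $m=0$ and simplify. The key observations are that $\binom{n}{0}=1$, so both $\binom{n+m}{m}$ and $\binom{n+m}{m}^2$ collapse to $1$, while $\binom{m+n}{m+k}$ becomes $\binom{n}{k}$, and the prefactor $x^{n+m}$ becomes $x^n$. After these substitutions, the right-hand side reduces termwise to exactly the asserted expression
\begin{equation*}
\frac{x^{n}}{n!} \ln x
-\frac{x^{n}}{n!}\sum_{k=1}^{n}{\frac{(-1)^{k+1}}{k}\binom{n}{k}}+C_n,
\end{equation*}
which completes the argument.

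Since the whole proof amounts to a one-line substitution into an already-proven theorem, there is no genuine obstacle; the only thing to be careful about is verifying that each factor of $\binom{n+m}{m}$ is really being raised to the correct power ($1$ versus $2$) before simplifying, so that the two logarithmic terms appear with the matching coefficient $x^n/n!$. Once this bookkeeping is done, the corollary follows.
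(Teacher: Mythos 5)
Your proposal is correct and is exactly what the paper intends: the corollary is obtained by the direct substitution $m=0$ into Theorem \ref{t 4.4}, using $\binom{n}{0}=1$ and $\binom{m+n}{m+k}\big|_{m=0}=\binom{n}{k}$. No further comment is needed.
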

\subsubsection{Divergence of the alternating series}
The harmonic series was proven to diverge by several mathematicians including Nicole Oresme \cite{Oresme}, Pietro Mengoli \cite{Mengoli}, Johann Bernoulli \cite{JohannBernoulli}, and Jacob Bernoulli \cite{JacobBernoulli1,JacobBernoulli2}. Twenty additional proofs can also be found in \cite{kifowit2006harmonic}. In this section, we use this fact to proof the divergence of the alternating series presented in this section. 
\begin{theorem} \label{t 4.5}
For any $m\in\mathbb{N}$, we have that 
\begin{equation*}
\lim_{n \to \infty}{\left[\sum_{k=1}^{n}{\frac{(-1)^{k+1}}{k}\binom{m+n}{m+k}}\right]}=\infty
.\end{equation*}
\end{theorem}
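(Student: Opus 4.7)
The plan is to reduce the divergence question to the divergence of the harmonic series by invoking Theorem \ref{t 4.1}. That theorem equates the alternating sum on the left-hand side of the statement with the much simpler closed form
$$
\binom{m+n}{m}\sum_{k=1+m}^{n+m}{\frac{1}{k}}
=\binom{m+n}{m}\left(H_{n+m}-H_{m}\right),
$$
where $H_N=\sum_{j=1}^{N}\frac{1}{j}$ denotes the $N$-th harmonic number. So the whole problem collapses to understanding the limit of this product as $n\to\infty$ with $m$ fixed.

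Next I would treat the two factors separately. The binomial coefficient $\binom{m+n}{m}$ is a polynomial in $n$ of degree $m$ with positive leading coefficient $\frac{1}{m!}$, so it is bounded below by $1$ for every $n\in\mathbb{N}$ (and in fact tends to $+\infty$ when $m\geq 1$). For the harmonic factor, $H_m$ is a fixed finite constant depending only on $m$, while $H_{n+m}\to\infty$ as $n\to\infty$ because the harmonic series diverges (a classical fact, referenced in the paragraph preceding the statement). Consequently $H_{n+m}-H_{m}\to\infty$.

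Putting these two observations together gives
$$
\sum_{k=1}^{n}{\frac{(-1)^{k+1}}{k}\binom{m+n}{m+k}}
=\binom{m+n}{m}\left(H_{n+m}-H_m\right)
\geq H_{n+m}-H_m\xrightarrow[n\to\infty]{}\infty,
$$
which is the claimed divergence. The argument is essentially immediate once Theorem \ref{t 4.1} is applied, so there is no real obstacle; the only subtlety worth mentioning in the written proof is why the lower bound $\binom{m+n}{m}\geq 1$ suffices even in the edge case $m=0$ (where the binomial factor equals $1$ identically and the result reduces to the classical divergence of $H_n$).
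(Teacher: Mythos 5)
Your proposal is correct and follows essentially the same route as the paper: apply Theorem \ref{t 4.1} to rewrite the alternating sum as $\binom{m+n}{m}\sum_{k=1+m}^{n+m}\frac{1}{k}$ and then invoke the divergence of the harmonic series. Your explicit lower bound $\binom{m+n}{m}\geq 1$ is in fact a small improvement, since it cleanly covers the case $m=0$ where the binomial factor is identically $1$ rather than tending to infinity as the paper's proof asserts.
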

\begin{proof}
\begin{equation*}
\lim_{n \to \infty}{\binom{n+m}{m}}
=\lim_{n \to \infty}{\frac{(n+m)!}{m!n!}}
=\frac{1}{m!}\lim_{n \to \infty}{(n+m)\cdots(n+1)}
=\infty
.\end{equation*}
\begin{equation*}
\lim_{n \to \infty}{\sum_{k=1+m}^{n+m}{\frac{1}{k}}}
=\lim_{n \to \infty}{\left[\sum_{k=1}^{n+m}{\frac{1}{k}}-\sum_{k=1}^{m}{\frac{1}{k}}\right]}
=\sum_{k=1}^{\infty}{\frac{1}{k}}-\sum_{k=1}^{m}{\frac{1}{k}}
=\infty
.\end{equation*}
Hence, by using Theorem \ref{t 4.1}, we prove that this alternating series diverges. 
\end{proof}
\bibliographystyle{elsarticle-num}
\bibliography{Repeated_Integration_v2}

\end{document}